\tikzstyle directed=[postaction={decorate,decoration={markings,    mark=at
position #1 with {\arrow{>}}}}] \tikzstyle
\tikzset{anchorbase/.style={baseline={([yshift=-0.5ex]current bounding box.center)}},
  tinynodes/.style={font=\tiny,text height=0.75ex,text depth=0.15ex},
  smallnodes/.style={font=\scriptsize,text height=0.75ex,text depth=0.15ex},
}
\newcommand{\ru}{to [out=0,in=270]}
\newcommand{\rd}{to [out=0,in=90]}
\newcommand{\ur}{to [out=90,in=180]}
\newcommand{\lu}{to [out=180,in=270]}
\newcommand{\dr}{to [out=270,in=180]}
\newcommand{\dl}{to [out=270,in=0]}
\newcommand{\pu}{to [out=90,in=270]}
\newcommand{\pr}{to [out=0,in=180]}
\newcommand{\pd}{to [out=270,in=90]}
\newcommand\scalemath[2]{\scalebox{#1}{\mbox{\ensuremath{\displaystyle #2}}}}
\newcommand{\qbins}[2]{\genfrac[]{0pt}{1}{#1}{#2}}
\newcommand{\qb}[2]{\qbins{#1}{#2}}
\def\C{{\mathbbm C}}
\def\N{{\mathbbm N}}
\def\Z{{\mathbbm Z}}
\def\Q{{\mathbbm Q}}
\def\QTf{{}_\bullet\mathrm{QT}_4}
\def\QT{\mathrm{QT}_4}
\def\T{\mathrm{T}_4}
\def\Tf{{}_\bullet\mathrm{T}_4}
\def\Fa{\mathcal{F}_a}
\def\Fi{\mathcal{F}_i}
\def\qc{qc}
\newcommand{\qp}[2]{(#1)_{#2}}
\newcommand{\qpp}[1]{(q^2)_{#1}}
\newtheorem{thm}{Theorem}[section]
\newtheorem{cor}[thm]{Corollary}
\newtheorem{lem}[thm]{Lemma}
\newtheorem{rem}[thm]{Remark}
\newtheorem{prop}[thm]{Proposition}
\newtheorem{defn}[thm]{Definition}
\title{Tangle addition and the knots-quivers correspondence}
\author{Marko Sto$\check{\text{s}}$i$\acute{\text{c}}$
  \and
  Paul Wedrich
 }
\newcommand{\Addresses}{{
  \bigskip
  \footnotesize

  Marko Sto$\check{\text{s}}$i$\acute{\text{c}}$
  
\noindent\textsc{CAMGSD, Departamento de Matem\'atica, Instituto Superior Tecnico, Av. Rovisco Pais, 1049-001 Lisbon, Portugal\\
\indent{\it and}\\
\noindent Mathematical Institute SANU, Knez Mihailova 36, 11000 Beograd, Serbia.}\par\nopagebreak
  \textit{E-mail address:} \texttt{mstosic@isr.ist.utl.pt}

  \medskip

  Paul Wedrich

\noindent \textsc{Max Planck Institute for Mathematics, Vivatsgasse 7, 53111 Bonn, Germany\\
\indent{\it and}\\
\noindent Mathematical Institute, University of Bonn,
Endenicher Allee 60, 53115 Bonn, Germany\\
\indent{\it and}\\
\noindent Mathematical Sciences Research Institute,
17 Gauss Way, Berkeley, CA 94720, USA.
}\par\nopagebreak
  \textit{E-mail address:} \texttt{p.wedrich@gmail.com}
  \par\nopagebreak
  \textit{Website:} \texttt{paul.wedrich.at}
}}
\date{}
\begin{document}
\maketitle

\begin{abstract}
    We prove that the generating functions for the one row/column colored
    HOMFLY-PT invariants of arborescent links are specializations of the
    generating functions of the motivic Donaldson-Thomas invariants of
    appropriate quivers that we naturally associate with these links. Our
    approach extends the previously established tangles-quivers correspondence
    for rational tangles to algebraic tangles by developing gluing formulas for
    HOMFLY-PT skein generating functions under Conway's tangle addition. As a consequence, we prove
    the conjectural links-quivers correspondence of
    Kucharski--Reineke--Sto$\check{\text{s}}$i$\acute{\text{c}}$--Su$\text{ł}$kowski
    for all arborescent links.
    
\end{abstract}

\section{Introduction}
The knots-quivers correspondence of
Kucharski--Reineke--Sto$\check{\text{s}}$i$\acute{\text{c}}$--
Su$\text{ł}$kowski \cite{KRSSlong,KRSSshort} proposes a relation between the
colored HOMFLY-PT polynomials of knots and the motivic Donaldson-Thomas
invariants of symmetric quivers. The main prediction is that the generating
function of the (anti-)symmetrically colored reduced HOMFLY-PT polynomials
$P_j(K)$ of any framed oriented knot $K$ can be expressed as

\begin{equation}
    \label{eq:knotqf}
    \sum_{j\geq 0} P_j(K)x^j = 
    \sum_{\textbf{d}=(d_1,\dots,d_m)\in \N^{m}} \hspace{-.5cm}(-1)^{R\cdot\textbf{d}} q^{S\cdot\textbf{d}} a^{A\cdot\textbf{d}} q^{\textbf{d}
    \cdot Q\cdot \textbf{d}^t} {d_1+\cdots+d_m \brack d_1,\dots,d_m} x^{d_1+\cdots+d_m}
    \end{equation}
    for some $m\geq 1$, $R,S,A\in \Z^m$ and a symmetric integer matrix $Q$ of
    size $m\times m$, where square brackets indicate quantum multinomial
    coefficients and $\N:=\Z_{\geq 0}$. Such expressions compactly encode the growth behavior of the
    colored HOMFLY-PT invariants (and also colored Jones polynomials), which
    is of central interest in quantum topology.
    \smallskip

Assuming the matrix $Q$ has non-negative entries\footnote{This can always be
achieved at the expense of a framing change on $K$.}, it can be interpreted as
the adjacency matrix of a quiver. As explained in \cite{KRSSlong}, the
right-hand side of \eqref{eq:knotqf} then appears as a specialisation of the
motivic DT-series of the quiver, i.e. the $\Z\times \N^m$-graded
Hilbert--Poincar\'{e} series of the corresponding cohomological Hall algebra
\cite{KS,E}. Moreover, the specialisation data is conjectured to be determined
by the Poincar\'{e} polynomial of the reduced triply-graded Khovanov--Rozansky
homology \cite{KR} of $K$.\smallskip

For geometric interpretations of the knots-quivers correspondence and relations
with colored HOMFLY-PT homology \cite{Wed3} we refer to \cite{EKL,PSS,PS}.
Possibly related connections between enumerative geometry and quantum knot
invariants have been studied in \cite{LM,OV,DHS,Mau,ORS,OS}. \smallskip

The knots-quivers conjecture has been verified for all knots with at most six
crossings and the infinite families of $(2,2n+1)$ torus knots and twist knots in
\cite{KRSSlong}. These verifications proceeded by ad hoc constructions of
generating function data as required by \eqref{eq:knotqf} for each such knot. \smallskip

In \cite{SW} we verified the knots-quivers conjecture and its natural extension
to a links-quivers conjecture for all 2-bridge links in a systematic way.
Motivated by the idea of \emph{skein theory with variable color}, we introduced
a way of encoding the generating functions of HOMFLY-PT invariants of $4$-ended
tangles in a ``quiver form'' analogous to \eqref{eq:knotqf}. We then showed how
to explicitly construct such quiver forms for all rational tangles
by induction on the crossing number, building on previous work in
\cite{Wed1,Wed2}. Finally, we verified that they provide generating function
data as required by \eqref{eq:knotqf} upon closing the rational tangle into a
2-bridge link.\smallskip

Furthermore, while expressions of the form \eqref{eq:knotqf} are certainly not
unique, not even for a fixed size $m\in \N$, the inductive description from
\cite{SW} assigns to each rational tangle, and thus also to its closure, a
distinguished quiver form expression.\smallskip

The purpose of this paper is to extend the relationship between $4$-ended
tangles and quivers beyond rational tangles and prove the links-quivers
conjecture for a larger class of links. 

\begin{thm}\label{thm:main} There exists a family $\QT$ of $4$-ended framed oriented tangles
with the following properties:
\begin{itemize}
    \item $\QT$ contains the trivial 2-strand tangle.
    \item $\QT$ is closed under diffeomorphisms of $(B^3, \partial B^3, \{4~pts\})$.
    \item $\QT$ is closed under Conway's tangle addition~\cite{Con}, the binary operation of
    gluing two $4$-ended tangles at pairs of boundary points as follows:
    \[\begin{tikzpicture} [scale=.5,anchorbase, tinynodes]
    \draw[thick] (-0.5,0) to (-0.5,1.5);
    \draw[thick] (0.5,0) to (0.5,1.5);
    \draw[thick, fill=white] (-.75,.5) rectangle (.75,1);
    \node at (0,.75) {$\tau_1$};
\end{tikzpicture}
\;,\;
 \begin{tikzpicture} [scale=.5,anchorbase, tinynodes]
    \draw[thick] (-0.5,0) to (-0.5,1.5);
    \draw[thick] (0.5,0) to (0.5,1.5);
    \draw[thick, fill=white] (-.75,.5) rectangle (.75,1);
    \node at (0,.75) {$\tau_2$};
    \end{tikzpicture}
    \qquad 
    \xrightarrow{}
    \qquad
    \begin{tikzpicture} [scale=.5,anchorbase, tinynodes]
        \draw[thick] (-0.5,0) to (-0.5,1.5);
        \draw[thick] (0.5,0.5) to (0.5,1) \ur (1,1.5) \rd (1.5,1);
        \draw[thick, fill=white] (-.75,.5) rectangle (.75,1);
        \draw[thick] (0.5,.5) \dr (1,0) \ru (1.5,0.5) to (1.5,1);
        \draw[thick] (2.5,0) to (2.5,1.5);
        \draw[thick, fill=white] (1.25,.5) rectangle (2.75,1);
        \node at (0,.75) {$\tau_1$};
        \node at (2,.75) {$\tau_2$};
        \end{tikzpicture}
    \]
    \item The appropriate analogue of the knots-quivers correspondence
    \eqref{eq:linkqf} holds for any link obtained by closing off a tangle in
    $\QT$.
\end{itemize} 
\end{thm}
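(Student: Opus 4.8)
The plan is to define $\QT$ as the class of tangles built from a finite set of "atomic" rational tangles by iterating Conway's tangle addition together with the symmetry operations (diffeomorphisms rotating the four boundary points), and to carry along with each tangle a distinguished \emph{quiver form} for its HOMFLY-PT skein generating function, refining the data of \cite{SW}. Concretely, I would first fix the skein-theoretic framework: working in the HOMFLY-PT skein module of the $4$-punctured ball with a variable color $j$ on each of the two through-strands, one records the generating function $\sum_j x^j\, (\text{tangle invariant})_j$ as a vector with entries indexed by a basis of the relevant skein space (the "crossingless" or "Jones--Wenzl-like" idempotent basis in the two-strand colored setting, so typically a $2\times 2$ or small-rank matrix of generating functions). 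The first step is to record that each atomic tangle — the trivial $2$-strand tangle and the one-crossing tangles, which are rational — has such a quiver form; for the trivial tangle this is essentially immediate, and for the crossing tangles it is supplied by the rational-tangle results of \cite{SW,Wed1,Wed2}.

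The second and central step is the gluing formula: given quiver forms for $\tau_1$ and $\tau_2$, produce one for $\tau_1 + \tau_2$. Tangle addition corresponds on the level of skein generating functions to a bilinear pairing (a "partial trace" contracting the two pairs of glued endpoints), so one must show that applying this pairing to two expressions of the shape in \eqref{eq:knotqf} — sums over $\mathbf d\in\N^m$ of signed monomials in $q,a$ times a quantum multinomial times $x^{|\mathbf d|}$ — again yields an expression of that shape, now over $\N^{m_1+m_2+(\text{extra vertices})}$. The key algebraic input is a quantum binomial/Cauchy-type identity allowing one to merge the two quantum multinomials and absorb the cross-terms produced by the pairing into new off-diagonal entries of the combined $Q$-matrix (and new vertices, corresponding to the summation over the color flowing through the glued region); this is the analogue, for tangle addition, of the "multiplication of quiver generating series" underlying the knots-quivers correspondence. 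I would isolate this as a lemma: \emph{the set of vector-valued power series in $x$ admitting a quiver form is closed under the tangle-addition pairing}, with an explicit description of the resulting $(R,S,A,Q,m)$.

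The remaining steps are comparatively routine. Closure of $\QT$ under diffeomorphisms of $(B^3,\partial B^3,\{4\text{ pts}\})$ follows because the mapping class group is generated by the two "quarter-turn" elements, each of which acts on the skein generating function by an invertible linear change of basis with Laurent-monomial entries, hence preserves the quiver-form property after a bookkeeping adjustment of $R,S,A$; closure under addition is Step 2; and containment of the trivial tangle is Step 1. Finally, for the last bullet, closing a tangle $\tau\in\QT$ into a link $L$ amounts to pairing its quiver-form vector against the (fixed, quiver-form) vector of the trivial "numerator/denominator closure" tangle, which by the same gluing lemma produces an expression of the form \eqref{eq:linkqf} for $\sum_j P_j(L)x^j$ — after, if necessary, a framing change to make $Q$ have non-negative entries, exactly as in the footnote to \eqref{eq:knotqf}. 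I expect the main obstacle to be Step 2: controlling the quantum-multinomial combinatorics so that the cross-terms genuinely assemble into a \emph{symmetric} integer matrix with the right diagonal (equivalently, that the naive pairing does not spoil the "motivic/positivity" structure), and organizing the induction so that the number of quiver vertices and the matrices $R,S,A,Q$ are produced by a clean, closed-form recursion rather than an ad hoc construction for each tangle.
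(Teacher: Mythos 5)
Your overall architecture matches the paper's: fix a basis of the relevant skein space (the webs $UP[j,k]$, $OP[j,k]$, $RI[j,k]$), record partition functions in quiver form, prove a gluing lemma for tangle addition, and deduce diffeomorphism-closure from crossing attachments and the link statement from the closure pairing. However, your central claimed lemma --- \emph{that the class of generating functions in quiver form is closed under the tangle-addition pairing} --- is false as stated, and repairing it is the actual mathematical content of the paper. The structure constants of the pairing (Lemmas~\ref{lem:RIRI}--\ref{lem:OPOP}) are not monomials times quantum multinomials: they contain $q$-Pochhammer \emph{denominators} such as $\qpp{k'}^{-1}$ and $a$-dependent Pochhammer numerators $\qp{a^2q^{\#}}{k'}$, and the glued tangle can acquire new closed components contributing further factors $\qpp{j}^{-1}$. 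A naive quiver form on the inputs does not supply enough numerator Pochhammer symbols to cancel these, so the induction does not close. The paper's fix is to refine ``quiver form'' into \emph{active} and \emph{inactive} quiver forms (Definition~\ref{def:qf}), carrying an extra $\qpp{|\textbf{d}_a|}$ or $\qpp{|\textbf{d}_i|}$ in the numerator, and to tie which refinement is required to the \emph{connectivity type} of the tangle ($UP_{par}$, $OP_{ud}$, $OP_{lr}$, $RI_{par}$, with the crossed types handled indirectly via $T^{\pm1},R^{\pm1}$). The bookkeeping of which strand-connectivity produces which extra Pochhammer, and the verification that the glued tangle again lands in the correct refined class, is the heart of the proof of Theorem~\ref{thm:mainthm}; your proposal has no mechanism for this.

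Two further gaps: (i) you do not distinguish the connectivity sub-cases (parallel versus crossed through-strands), yet the four sub-cases in each of the $RI$-$RI$ and $OP$-$UP$ gluings require genuinely different arguments (Reidemeister~II moves in the gluing region, rotations $r_h$, $r_v$, and the equivalences of Lemma~\ref{lem:cr-equic-cond}); and (ii) your reduction of diffeomorphism-closure to ``quarter-turn changes of basis'' understates what is needed --- the mapping class group is generated by boundary twists, i.e.\ crossing attachments, whose compatibility with active/inactive quiver forms is the nontrivial content of Lemma~\ref{lem:doublecrossingoperations} (imported from \cite{SW}) rather than a monomial base change. The resummation identities you correctly anticipate (Lemmas~\ref{lem-first}--\ref{lemlong} and Corollary~\ref{cor:rewrite}) do appear, but they are deployed \emph{after} the active/inactive cancellations, to split the surviving numerator Pochhammers into new summation indices; on their own they cannot remove denominators.
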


The orbit of the trivial tangle under under diffeomorphisms of the 3-ball that
preserve the tangle boundary set-wise is called the set of rational tangles. Its
closure under tangle addition is called the set of algebraic tangles, which were
introduced by Conway in the context of link enumeration~\cite{Con}. Links
obtained as closures of \emph{algebraic tangles} are called \emph{arborescent},
or \emph{algebraic in the sense of Conway}\footnote{This is to distinguish them from
a different notion of algebraic links, which arise as links of plane curve
singularities.}.

\begin{cor} The links-quivers correspondence holds for all arborescent links and, in
particular, all Montesinos and pretzel links. 
\end{cor}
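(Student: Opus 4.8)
The plan is to derive the corollary as a formal consequence of Theorem~\ref{thm:main} together with the description of arborescent links recalled above, requiring no further geometric input. First I would note that $\QT$ contains the trivial $2$-strand tangle and is closed under diffeomorphisms of $(B^3,\partial B^3,\{4~\mathrm{pts}\})$, hence it contains the entire diffeomorphism orbit of the trivial tangle; by definition this orbit is the set of rational tangles. Since $\QT$ is moreover closed under Conway's tangle addition, it contains the closure of the set of rational tangles under tangle addition, which is precisely the set of algebraic tangles.

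Next, let $L$ be an arborescent link, i.e.\ a closure of some algebraic tangle $\tau$. By the previous paragraph $\tau\in\QT$, so the last item of Theorem~\ref{thm:main} produces a quiver form expression of the shape~\eqref{eq:linkqf} for $L$; this is the links-quivers correspondence for $L$. If one allows both the numerator and the denominator closure in the definition of ``arborescent'', one uses in addition that the $90^\circ$ rotation of $B^3$ is a diffeomorphism of $(B^3,\partial B^3,\{4~\mathrm{pts}\})$ interchanging the two closures, so that closure of $\QT$ under diffeomorphisms already covers both.

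For the ``in particular'' clause I would invoke the standard fact that a Montesinos link $M(\beta_1/\alpha_1,\dots,\beta_n/\alpha_n)$ (with or without an extra integer framing term) is a closure of the tangle sum $\beta_1/\alpha_1+\cdots+\beta_n/\alpha_n$ of rational tangles, hence algebraic and therefore arborescent; pretzel links $P(a_1,\dots,a_n)$ are the special case where each summand $1/a_i$ is an integer tangle, and integer tangles are rational, so pretzel links are arborescent as well. Both families are thus covered by the previous paragraph.

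There is no genuine obstacle here: once Theorem~\ref{thm:main} is available, the corollary is essentially immediate, the only thing to check being conventions --- that the framings and orientations carried along the construction of $\QT$ restrict to admissible data on the closure, and that the ``appropriate analogue'' of~\eqref{eq:knotqf} for $4$-ended tangles, upon closing the tangle into a link, specialises exactly to the identity~\eqref{eq:linkqf} of the links-quivers conjecture. All of the substantive work lies in the proof of Theorem~\ref{thm:main} itself.
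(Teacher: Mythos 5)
Your proposal is correct and matches the paper's (implicit) argument exactly: the corollary is an immediate consequence of Theorem~\ref{thm:main} combined with the definitions of rational tangles, algebraic tangles, and arborescent links recalled just before the corollary, plus the standard fact that Montesinos and pretzel links are closures of sums of rational tangles. The paper offers no separate proof because, as you note, all the substantive work is in Theorem~\ref{thm:main} itself.
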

In particular, this provides an algorithm to compute one row/column colored
HOMFLY-PT invariants for these classes of links. For alternative approaches see
e.g.~\cite{Mir, GLL, Ano}. 

To summarise, our results in this paper show that the tangles-quivers
correspondence is robust under skein-theoretic operations, at least as far as
4-ended tangles and homogeneous one row/column colorings are concerned. This is
surprising, given that geometric and physical models for the large color limit
of HOMFLY-PT invariants usually work in the closed setting of knots and links.
However, in the 4-ended tangle case see \cite{Wed2} for elements of a geometric
model for categorified colored HOMFLY-PT invariants and \cite{KWZ1, KWZ2} for a
detailed study in the uncolored, categorified $\mathfrak{sl}_2$ case.

Extensions to tangles with more endpoints and different colors, and connections
to categorified invariants are interesting subjects for future work.

\subsection*{Structure of this paper}
The family $\QT$ is constructed in Section~\ref{sec:basics} and the gluing
result is proved in Section~\ref{sec:gluing}. In Section~\ref{sec:size} we show
that gluing is at worst bilinear in the size of the input quiver data.
Section~\ref{sec:examples} contains examples for pretzel tangles that suggest
that the minimal quiver data of a sum tangle is typically smaller than the
worst case estimate. 


\subsection*{Acknowledgements}
We would like to thank Eugene Gorsky, Mikhail Gorsky, Piotr Kucharski,
Anton Mellit, Ramadevi Pichai, Jacob Rasmussen, Piotr Su$\text{ł}$kowski and Claudius Zibrowius for useful
discussions, and Mikhail Gorsky and Piotr Su$\text{ł}$kowski for comments on a draft of this paper. 

This project started during the conference ``Categorification and Higher
Representation Theory'' at the Institute Mittag Leffler and we are grateful
to the organisers and the institute for their hospitality. P.W. would like to thank the
Departamento de Matem\'atica at the Instituto Superior T\'{e}cnico for supporting an
essential research visit in December 2019.

During the final preparation of this article we received the sad news of the
passing of John H. Conway. We take this opportunity to express our admiration
for the truly incomparable mathematician that he was.

\subsection*{Funding}

M.S. was partially supported by Portuguese {\it Funda\c c\~ao para a Ci\^encia e a Tecnologia} (FCT),
through the grant `Higher Structures and Applications', no. PTDC/MAT-PUR/31089/2017, and FCT Exploratory grant 
no. IF/00998/2015. M.S. was also partially supported by the Ministry of Education,
Science and Technological Development of the Republic of Serbia through Mathematical Institute SANU.
P.W. was partially supported by the Australian
Research Council grants `Braid groups and higher representation theory'
DP140103821 and `Low dimensional categories' DP160103479 while at the Australian
National University during early stages of this project. P.W. was also supported by the
National Science Foundation under Grant No. DMS-1440140, while in
residence at the Mathematical Sciences Research Institute in Berkeley,
California, during the Spring 2020 semester.

\section{HOMFLY-PT partition function}
\label{sec:basics}
Let $L$ be a link with $c$ components. Consider the reduced $\wedge^j$-colored HOMFLY-PT
invariant $P_j(L)$ for $j\geq 0$ and form the generating function
$P(L)=\sum_{j\geq 0} P_j(L)x^j$. 

\begin{defn} The HOMFLY-PT partition function $P(L)$ of a link with $c$ components is
said to be in quiver form, if it is presented as:
    \begin{equation}
        \label{eq:linkqf}
        P(L)=
        \sum_{\textbf{d}\in \N^{m}} (-q)^{S\cdot\textbf{d}} a^{A\cdot\textbf{d}} q^{\textbf{d}
        \cdot Q\cdot \textbf{d}^t} {|\textbf{d}| \brack \textbf{d}}  \qpp{|\textbf{d}|}^{1-c} x^{|\textbf{d}|}.
        \end{equation}
        for some $m\geq 1$, $S,A\in \Z^m$ and $Q$ a symmetric $m\times m$ integer matrix.
        Here $|\textbf{d}|$ denotes the sum of the entries of the
        vector $\textbf{d}$, $\qpp{k}=\prod_{i=1}^k(1-q^{2i})$ is a
        $q$-Pochhammer symbol, and
\small        
        \[\qb{d_1+\cdots +d_m}{d_1,\dots,d_m}:= 
        \frac{\qpp{d_1+\cdots +d_m}}{\qpp{d_1}\cdots\qpp{d_m}}\] 
       \normalsize
        is a quantum multinomial coefficient.
 \end{defn}

If $L$ is a knot and $P(L)$ is in quiver form, then the coefficients of $x^j$
are manifestly polynomial. Note, that \eqref{eq:linkqf} is slightly more rigid
than \eqref{eq:knotqf} since we require $R=S$. If $L$ is a link
with $c\geq 2$ components, then the coefficients exhibit the expected standard denominator
$\qpp{|\textbf{d}|}^{c-1}$ instead.

As explained in \cite[Section 4.3]{SW}, the generating function for the
symmetrically colored HOMFLY-PT invariants can be obtained from $P(L)$ by a
simple change of variables. 

\subsection{4-ended tangles and their HOMFLY-PT partition functions}
Let $\T$ denote the set of framed, oriented tangles with exactly $4$ boundary
points, which we represent by tangle diagrams as shown below, with boundary
points labeled by intercardinal directions. Let $\Tf$ be the subset of diagrams
for which the $SW$ boundary point is oriented inward. Within $\Tf$ we
distinguish three different configurations of boundary orientations, which we
call \emph{boundary types}:
\[
    \begin{tikzpicture} [scale=.5,anchorbase,tinynodes]
        \draw[thick] (-0.5,0) to (-0.5,1.5);
        \draw[thick] (0.5,0) to (0.5,1.5);
        \draw[thick, fill=white] (-.75,.5) rectangle (.75,1);
        \node at (-1.1,0.2) {$SW$};
        \node at (-1.1,1.3) {$NW$};
        \node at (1.1,0.2) {$SE$};
        \node at (1.1,1.3) {$NE$};
        \end{tikzpicture}
\qquad    
UP:\; \begin{tikzpicture} [scale=.5,anchorbase]
    \draw[thick, ->] (-0.5,0) to (-0.5,1.5);
    \draw[thick, ->] (0.5,0) to (0.5,1.5);
    \draw[thick, fill=white] (-.75,.5) rectangle (.75,1);
    \end{tikzpicture}
    \;,\quad 
    OP: \;\begin{tikzpicture} [scale=.5,anchorbase]
    \draw[thick, ->] (-0.5,0) to (-0.5,1.5);
    \draw[thick, <-] (0.5,0) to (0.5,1.5);
    \draw[thick, fill=white] (-.75,.5) rectangle (.75,1);
    \end{tikzpicture}
    \;,\quad RI:\; \begin{tikzpicture} [scale=.5,anchorbase] \draw[thick]
    (-0.5,0) to (-0.5,1.5); \draw[thick, <->] (0.5,0) to (0.5,1.5); \draw[thick,
    fill=white] (-.75,.5) rectangle (.75,1); \end{tikzpicture}\;.\] 
     We distinguish the types $UP$ and $RI$ even though they
    are related by a rotation.

\begin{defn} 
    \label{def:webs}   For any tangle $\tau\in \Tf$ and for $j\in \N$, we consider the
    $\wedge^j$-colored HOMFLY-PT invariant $\langle \tau \rangle_j$, an element
    of the free $\Q[a^{\pm 1}](q)$-module generated by the following basic webs
    for in the HOMFLY-PT skein theory (see \cite[Section 2.3]{SW} for a concise
    summary and \cite{MOY, CKM, TVW} for background):
    \[ UP[j,k] \;= \xy
    (0,0)*{
    \begin{tikzpicture} [scale=.75]
    \draw[thick, directed=.55] (-0.5,0) to (-0.5,0.5);
    \draw[thick, directed=.60] (-0.5,0.5) to (-0.5,1);
    \draw[thick, directed=.65] (-0.5,1) to (-0.5,1.5);
    \draw[thick, directed=.55] (0.5,0) to (0.5,0.5);
    \draw[thick, directed=.60] (0.5,0.5) to (0.5,1);
    \draw[thick, directed=.65] (0.5,1) to (0.5,1.5);
    \draw[thick, directed=.55] (0.5,0.4) to (-0.5,0.55);
    \draw[thick, directed=.55] (-0.5,0.95) to (0.5,1.1);
    \node at (-0.7,0.1) {\scriptsize $j$};
    \node at (0.7,0.1) {\scriptsize $j$};
    \node at (-0.7,1.4) {\scriptsize $j$};
    \node at (0.7,1.4) {\scriptsize $j$};
    \node at (0,0.25) {\scriptsize $k$};
    \node at (0,1.25) {\scriptsize $k$};
    \end{tikzpicture}
    }
    \endxy 
    \;,\quad
    OP[j,k] \;= \xy
    (0,0)*{
    \begin{tikzpicture} [scale=.75]
    \draw[thick, directed=.55] (-0.5,0) to (-0.5,0.5);
    \draw[thick, directed=.60] (-0.5,0.5) to (-0.5,1);
    \draw[thick, directed=.65] (-0.5,1) to (-0.5,1.5);
    \draw[thick, rdirected=.55] (0.5,0) to (0.5,0.5);
    \draw[thick, rdirected=.60] (0.5,0.5) to (0.5,1);
    \draw[thick, rdirected=.65] (0.5,1) to (0.5,1.5);
    \draw[thick, directed=.55] (-0.5,0.45) to [out=20,in=160] (0.5,0.45);
    \draw[thick, directed=.55] (0.5,1.05) to [out=200,in=340] (-0.5,1.05);
    \node at (-0.7,0.1) {\scriptsize $j$};
    \node at (0.7,0.1) {\scriptsize $j$};
    \node at (-0.7,1.4) {\scriptsize $j$};
    \node at (0.7,1.4) {\scriptsize $j$};
    \node at (0,1.25) {\scriptsize $k$};
    \node at (0,.25) {\scriptsize $k$};
    \end{tikzpicture}
    }
    \endxy
    \;,\quad
    RI[j,k]\;=
    \xy
    (0,0)*{
    \begin{tikzpicture} [scale=.75]
    \draw[thick, directed=.55] (-0.5,0) to (-0.5,0.5);
    \draw[thick, rdirected=.60] (-0.5,0.5) to (-0.5,1);
    \draw[thick, rdirected=.65] (-0.5,1) to (-0.5,1.5);
    \draw[thick, rdirected=.55] (0.5,0) to (0.5,0.5);
    \draw[thick, directed=.60] (0.5,0.5) to (0.5,1);
    \draw[thick, directed=.65] (0.5,1) to (0.5,1.5);
    \draw[thick, directed=.55] (-0.5,0.45) to [out=20,in=160] (0.5,0.45);
    \draw[thick, rdirected=.55] (0.5,1.05) to [out=200,in=340] (-0.5,1.05);
    \node at (-0.7,0.1) {\scriptsize $j$};
    \node at (0.7,0.1) {\scriptsize $j$};
    \node at (-0.7,1.4) {\scriptsize $j$};
    \node at (0.7,1.4) {\scriptsize $j$};
    \node at (0,1.25) {\scriptsize $k$};
    \end{tikzpicture}
    }
    \endxy
    ,\quad 0\leq k\leq j \] We will write $P(\tau)=\sum_{j\geq 0} \langle \tau
    \rangle_j$ for the generating function of these invariants and call it the
    HOMFLY-PT partition function.
\end{defn}

In \cite[Theorem 3.4]{SW} we proved that for a rational tangle $\tau\in \Tf$ of
boundary type $X$, the partition function $P(\tau)$ can be written in the form:
\begin{equation}
    \label{eq:ratgenf}
P(\tau)=
\sum_{\textbf{d}\in \N^{m+n}} (-q)^{S\cdot\textbf{d}} a^{A\cdot\textbf{d}} q^{\textbf{d}
\cdot Q\cdot \textbf{d}^t} {|\textbf{d}_a| \brack \textbf{d}_a}{|\textbf{d}_i| \brack \textbf{d}_i}  X[|\textbf{d}|,|\textbf{d}_a|]
\end{equation}
where $S,A\in \Z^{m+n}$ and $Q\in \Z^{(m+n)\times (m+n)}$ depend on the tangle.
The entries of the subvectors $\textbf{d}_a\in \N^m$ and $\textbf{d}_i\in \N^n$
of $\textbf{d}$ are called the active and inactive summation indices
respectively.

Expressions such as \eqref{eq:ratgenf} are suitable for studying the HOMFLY-PT
partition functions of rational tangles. However, for more general $4$-ended
tangles, in particular those with closed components, we need a more flexible
notion. 

\begin{defn}
    \label{def:qf}
If $\tau\in \Tf$ of boundary type $X$ with $c$ closed components,
then we say that $P(\tau)$ is in \emph{quiver form} if:
\begin{equation}
    \label{eq:qf}
    P(\tau)=
    \sum_{\textbf{d}\in \N^{m+n}} (-q)^{S\cdot\textbf{d}} a^{A\cdot\textbf{d}} q^{\textbf{d}
    \cdot Q\cdot \textbf{d}^t} {|\textbf{d}_a| \brack \textbf{d}_a}{|\textbf{d}_i| \brack \textbf{d}_i} \qpp{|\textbf{d}|}^{-c} X[|\textbf{d}|,|\textbf{d}_a|]
    \end{equation} 

We say that $P(\tau)$ is in \emph{active quiver form}, and write $\tau \in \Fa(X)$, if: 
\begin{equation}
    \label{eq:active}
P(\tau)=
\sum_{\textbf{d}\in \N^{m+n}} (-q)^{S\cdot\textbf{d}} a^{A\cdot\textbf{d}} q^{\textbf{d}
\cdot Q\cdot \textbf{d}^t} {|\textbf{d}_a| \brack \textbf{d}_a}{|\textbf{d}_i| \brack \textbf{d}_i} 
\frac{\qpp{|\textbf{d}_a|}}{\qpp{|\textbf{d}|}^{c}}  
X[|\textbf{d}|,|\textbf{d}_a|]
\end{equation}

We say that $P(\tau)$ is in \emph{inactive quiver form}, and write $\tau \in \Fi(X)$, if:
\begin{equation}
    P(\tau)=
    \sum_{\textbf{d}\in \N^{m+n}} (-q)^{S\cdot\textbf{d}} a^{A\cdot\textbf{d}} q^{\textbf{d}
    \cdot Q\cdot \textbf{d}^t} {|\textbf{d}_a| \brack \textbf{d}_a}{|\textbf{d}_i| \brack \textbf{d}_i} 
    \frac{\qpp{|\textbf{d}_i|}}{\qpp{|\textbf{d}|}^{c}}  
    X[|\textbf{d}|,|\textbf{d}_a|]
    \end{equation}
In all cases we ask that $m,n\in \N$ with $m+n\geq 1$, $S,A\in \Z^{m+n}$
and $Q\in \Z^{(m+n)\times (m+n)}$. Such data will be recorded as a triple, for
example in the case of \eqref{eq:active} by 
\[
    \left[
\scalemath{0.75}{X,\left( \begin{array}{c|c|c} 1-c & S_+ & A_+ \\ \hline\hline -c & S_- & A_- 
\end{array} \right)},
 \scalemath{0.75}{
 \left( 
\begin{array}{c|c} Q_{++} & Q_{+-} \\ \hline Q_{-+} & Q_{--} 
\end{array} \right)} \right],
\]
which indicates the boundary type, the presence of additional $q$-Pochhammer
symbols, as well as the vectors $S,A\in \Z^{m+n}$ and the matrix $Q\in
\Z^{(m+n)\times (m+n)}$ in block decomposition according to active ($+$) and
inactive ($-$) summation indices. 

\end{defn}

\begin{defn}
    Consider the following operations on $\T$.
\[
T\left(\begin{tikzpicture} [scale=.5,anchorbase]
\draw[thick] (-0.5,0) to (-0.5,1.5);
\draw[thick] (0.5,0) to (0.5,1.5);
\draw[thick, fill=white] (-.75,.5) rectangle (.75,1);
\end{tikzpicture}\right)
:=
\begin{tikzpicture} [scale=.5,anchorbase]
\draw[thick] (0.5,0) to (0.5,1) to [out=90,in=270] (-.5,2);
\draw[white, line width=.15cm] (-0.5,1) to [out=90,in=270] (.5,2);
\draw[thick] (-0.5,0) to (-0.5,1);
\draw[thick] (-0.5,1) to [out=90,in=270] (.5,2);
\draw[thick, fill=white] (-.75,.5) rectangle (.75,1);
\end{tikzpicture}
\quad, \quad
R\left(\begin{tikzpicture} [scale=.5,anchorbase]
\draw[thick] (-0.5,0) to (-0.5,1.5);
\draw[thick] (0.5,0) to (0.5,1.5);
\draw[thick, fill=white] (-.75,.5) rectangle (.75,1);
\end{tikzpicture}\right)
:=
\begin{tikzpicture} [scale=.5,anchorbase]
\draw[thick] (-0.5,0) to (-0.5,1.5);
\draw[thick] (0.5,.5) to (0.5,1) to [out=90,in=180](0.7,1.2) to [out=0,in=135] (1.5,.75) to [out=325,in=90] (2,0);
\draw[white, line width=.15cm] (0.7,.3) to [out=0,in=225] (1.5,.75) to [out=45,in=270] (2,1.5);
\draw[thick] (0.5,.5) to [out=270,in=180] (0.7,.3) to [out=0,in=225] (1.5,.75) to [out=45,in=270] (2,1.5);
\draw[thick, fill=white] (-.75,.5) rectangle (.75,1);
\end{tikzpicture}
\] 
Note that these operation preserve the orientation on the SW boundary point, and
thus restrict to endomorphisms of $\Tf$. We will also write $T^{-1}$ and
$R^{-1}$ for the inverse operations, which are given by gluing on the respective
inverse crossings.
\end{defn}

\begin{defn} We will consider six refined types of tangles $\tau\in \Tf$, which
encode boundary types and connectivity between boundary points:
    \begin{align*}UP_{par}:\; \begin{tikzpicture} [scale=.5,anchorbase]
        \draw[thick, ->] (-0.5,0) to (-0.5,1.5);
        \draw[thick, ->] (0.5,0) to (0.5,1.5);
        \draw[thick, fill=white] (-.75,.5) rectangle (.75,1);
        \draw[thin] (-0.5,.5) to (-0.5,1);
        \draw[thin] (0.5,.5) to (0.5,1);
        \end{tikzpicture}
        \;,\quad 
        &
        OP_{ud}: \;\begin{tikzpicture} [scale=.5,anchorbase]
        \draw[thick, ->] (-0.5,0) to (-0.5,1.5);
        \draw[thick, <-] (0.5,0) to (0.5,1.5);
        \draw[thick, fill=white] (-.75,.5) rectangle (.75,1);
        \draw[thin] (-0.5,.5) to (-0.5,1);
        \draw[thin] (0.5,.5) to (0.5,1);
        \end{tikzpicture}
        \;,\quad
        RI_{par}:\; \begin{tikzpicture} [scale=.5,anchorbase] \draw[thick]
        (-0.5,0) to (-0.5,1.5); \draw[thick, <->] (0.5,0) to (0.5,1.5); \draw[thick,
        fill=white] (-.75,.5) rectangle (.75,1); 
        \draw[thin] (-0.5,.5) \ur (0,.7) \rd (0.5,.5);
            \draw[thin] (-0.5,1) \dr (0,.8) \ru (0.5,1);
    \end{tikzpicture}\;
        ,\\
    UP_{cr}:\; \begin{tikzpicture} [scale=.5,anchorbase]
        \draw[thick, ->] (-0.5,0) to (-0.5,1.5);
        \draw[thick, ->] (0.5,0) to (0.5,1.5);
        \draw[thick, fill=white] (-.75,.5) rectangle (.75,1);
        \draw[thin] (0.5,.5) to (-0.5,1);
        \draw[thin] (-0.5,.5) to (0.5,1);
        \end{tikzpicture}
        \;,\quad 
        &
        OP_{lr}: \;\begin{tikzpicture} [scale=.5,anchorbase]
            \draw[thick, ->] (-0.5,0) to (-0.5,1.5);
            \draw[thick, <-] (0.5,0) to (0.5,1.5);
            \draw[thick, fill=white] (-.75,.5) rectangle (.75,1);
            \draw[thin] (-0.5,.5) \ur (0,.7) \rd (0.5,.5);
            \draw[thin] (-0.5,1) \dr (0,.8) \ru (0.5,1);
            \end{tikzpicture}
            \;,\quad\;\;
            RI_{cr}:\; \begin{tikzpicture} [scale=.5,anchorbase] \draw[thick]
                (-0.5,0) to (-0.5,1.5); \draw[thick, <->] (0.5,0) to (0.5,1.5); \draw[thick,
                fill=white] (-.75,.5) rectangle (.75,1); 
                \draw[thin] (-0.5,.5) \pu (0.5,1);
                    \draw[thin] (-0.5,1) \pd (0.5,.5);
            \end{tikzpicture}\;
.\end{align*}
For example, an $UP_{par}$ tangle has one strand directed from the $SW$ to the
$NW$ boundary point and the other strand directed from the $SE$ to the $NE$
boundary point, and possibly additional closed components.
\end{defn}

\subsection{The family of tangles $\QT$}
In this section we define a family of $4$-ended tangles $\QT\subset \T$ that
satisfy a \emph{tangles-quivers conjecture} that implies the links-quivers
conjecture for their closures. As before, we will focus on tangles with the inwards
orientation on the $SW$ boundary point.
\begin{defn}
    \label{def:TQC}
     Let $\QTf\subset \Tf$ denote the family of $4$-ended tangles $\tau$ that
satisfy the following conditions, depending on their type:
    \begin{enumerate}
        \item $\tau\in \Fa(UP)$ if $\tau$ is of type $UP_{par}$,
        \item $\tau\in \Fa(OP)$ if $\tau$ is of type $OP_{ud}$,
        \item $\tau\in \Fi(OP)$ if $\tau$ is of type $OP_{lr}$,
        \item $\tau\in \Fi(RI)$ if $\tau$ is of type $RI_{par}$,
        \item $C \tau\in \QTf$ for any $C\in \{T,T^{-1},R,R^{-1}\}$ if $\tau$ is
        of type $UP_{cr}$ or $RI_{cr}$.
    \end{enumerate}
    Let $\QT \subset \T$ be the family of those $4$-ended tangles $\tau$, for which
    there exists a rotation $r$ of the plane that takes $\tau$ to $r(\tau)\in \QTf$.
\end{defn}

We have the following symmetries.

\begin{lem}
    \label{lem:rotatetwo} If $\tau \in \QTf$, then so is 
    \begin{enumerate}
        \item the mirror image $-\tau\in \QTf$, 
\item the reflection across the $SW$-$NE$-diagonal $r_d(\tau)\in \QTf$,
\item the $\pi$-rotation around the vertical axis $r_{v}(\tau)\in \QTf$ for type $UP$,
\item the $\pi$-rotation around the horizontal axis $r_{h}(\tau)\in \QTf$ for type $RI$.
    \end{enumerate} 
\end{lem}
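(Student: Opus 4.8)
The plan is to establish each of the four symmetries by tracking how the corresponding geometric operation on a tangle $\tau$ acts on (i) its refined type, (ii) its boundary type $X$, (iii) the basic webs $X[j,k]$, and (iv) the quiver data $(S,A,Q)$ and the prefactor ($q$-Pochhammer) structure, and then checking that the resulting expression is again of the form required by Definition~\ref{def:TQC}. Since $\QTf$ is defined by a mix of membership conditions in $\Fa$, $\Fi$ for four ``base'' types and a closure condition under $\{T^{\pm 1},R^{\pm 1}\}$ for the two ``crossing'' types, I would first verify the symmetries on the base cases (1)--(4) and then bootstrap to case (5) using the fact that each of the four operations commutes (up to replacing $C$ by another element of $\{T^{\pm 1},R^{\pm 1}\}$) with $T$ and $R$. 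Concretely, for (5) one needs relations of the form $C\circ s = s\circ C'$ with $C,C'\in\{T^{\pm 1},R^{\pm 1}\}$, where $s$ is the symmetry in question; these are immediate from the pictures (e.g. mirroring turns a positive twist into a negative twist, so $-(T\tau) = T^{-1}(-\tau)$, and similarly for $R$), so if $\tau$ is of crossing type then $C\tau\in\QTf$ for all $C$ implies $s(\tau)$ is again of crossing type with the analogous property.

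For part (1), the mirror image: mirroring sends $q\mapsto q^{-1}$, $a\mapsto a^{-1}$ in the HOMFLY-PT skein module and fixes each basic web $X[j,k]$ (up to the same substitution on coefficients), while preserving all refined types and the closed-component count $c$. One then observes that applying $q\mapsto q^{-1}$ to an expression of the form \eqref{eq:active} (resp.\ \eqref{eq:qf}, the inactive form) yields, after the standard manipulation $\qpp{k}\big|_{q\to q^{-1}} = (-1)^k q^{-k(k+1)}\qpp{k}$ and the analogous identity for quantum multinomials, an expression of the \emph{same} shape with $S\mapsto -S - (\text{linear correction})$, $A\mapsto -A$, and $Q\mapsto -Q + (\text{correction from the Pochhammer and multinomial rescalings})$. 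The key point is that the corrections are themselves of the required polynomial-in-$\mathbf d$ shape (degree $\le 2$, with the degree-$2$ part symmetric), so the new data is still a valid triple; this is exactly the kind of computation carried out for rational tangles in \cite{SW} and I would cite that bookkeeping rather than redo it. Part (2), reflection across the $SW$--$NE$ diagonal $r_d$: this fixes the $SW$ orientation, swaps $NW\leftrightarrow SE$, and hence permutes the refined types in a way one reads off the pictures — it preserves $UP_{par}$ and $RI_{par}$ but one must check it maps $OP_{ud}\leftrightarrow OP_{lr}$ or fixes them appropriately; crucially it also interchanges the roles of the ``active'' and ``inactive'' strands/indices, so it should swap $\Fa\leftrightarrow\Fi$ precisely in the manner Definition~\ref{def:TQC} demands (e.g. an $OP_{ud}$ tangle in $\Fa(OP)$ goes to an $OP_{lr}$ tangle, which is required to be in $\Fi(OP)$). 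On the level of webs, $r_d$ acts by a known reflection symmetry of the $X[j,k]$ basis that swaps $|\mathbf d_a|\leftrightarrow|\mathbf d_i|$ and correspondingly swaps the $\qpp{|\mathbf d_a|}$ and $\qpp{|\mathbf d_i|}$ numerators; this is the mechanism making $\Fa$ and $\Fi$ dual under $r_d$. Parts (3) and (4) are easier: the $\pi$-rotation $r_v$ about the vertical axis for type $UP$ and $r_h$ about the horizontal axis for type $RI$ are diffeomorphisms of $(B^3,\partial B^3,\{4\text{ pts}\})$ that fix the relevant boundary type and refined type, fix (or trivially relabel) the webs, and act on the quiver data merely by permuting the summation indices — so the triple is unchanged up to a relabeling of $\{1,\dots,m+n\}$, which is obviously still a valid triple.

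The main obstacle I anticipate is part (2): getting the combinatorics of $r_d$ exactly right — verifying that the diagonal reflection sends each of the four base refined types to the one whose defining condition is its ``$\Fa\leftrightarrow\Fi$ dual,'' and that the web-level reflection symmetry really does interchange the active and inactive Pochhammer numerators as claimed, rather than producing some hybrid. This requires being careful about orientations (the reflection reverses the orientation of strands in a way that must be reconciled with the fixed inward $SW$ orientation, possibly after composing with an orientation reversal of a closed component) and about the conventions for which strand carries the ``active'' color $|\mathbf d_a|$ in each basic web. Once the type bookkeeping and the single web-symmetry lemma are pinned down, the rest is a routine substitution-of-variables argument of the type already present in the excerpt's references, and the crossing-type cases (5) follow formally from the commutation relations between $s$ and $\{T^{\pm1},R^{\pm1}\}$.
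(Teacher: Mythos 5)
Your approach is essentially the paper's: mirroring acts by inverting $q$ and preserves the existence of active resp.\ inactive quiver forms; the diagonal reflection swaps active with inactive summation indices in a way that matches the $\Fa$/$\Fi$ pattern of Definition~\ref{def:TQC}; the $\pi$-rotations fix the generating functions; and the crossing-type cases follow by commuting the symmetry past $T^{\pm1},R^{\pm1}$ (a step the paper leaves implicit but which you correctly spell out). There is, however, one concrete slip in your part (2): the reflection $r_d$ does \emph{not} preserve $UP_{par}$ and $RI_{par}$. Since it exchanges the $NW$ and $SE$ endpoints, it interchanges the boundary types $UP$ and $RI$ (and preserves $OP$), acting on basis webs by $UP[j,k]\mapsto RI[j,j-k]$ and $OP[j,k]\mapsto OP[j,j-k]$; on refined types it therefore sends $UP_{par}\leftrightarrow RI_{par}$ and $OP_{ud}\leftrightarrow OP_{lr}$. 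This is precisely what makes the bookkeeping close up: the substitution $k\mapsto j-k$ swaps $|\mathbf{d}_a|$ with $|\mathbf{d}_i|$, so the condition $\tau\in\Fa(UP)$ for type $UP_{par}$ is carried to $r_d(\tau)\in\Fi(RI)$ for type $RI_{par}$, and $\Fa(OP)$ for $OP_{ud}$ to $\Fi(OP)$ for $OP_{lr}$, exactly as Definition~\ref{def:TQC} demands. If $r_d$ really preserved $UP_{par}$ while swapping active and inactive data, the argument would fail, so this is the one piece of bookkeeping you must fix. A second, minor point: for parts (3)--(4) the basis webs are not fixed by a mere relabelling --- the rotation reverses the ladder rungs, and one identifies the rotated web with the original via the ``square switch'' web relation, which is the paper's stated justification. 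With these two corrections your argument coincides with the paper's proof.
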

\begin{proof}
Mirroring acts by inverting the variable $q$, which preserves the existence of
active resp. inactive quiver forms as required. The reflection interchanges
$OP[j,k]$ with $OP[j,j-k]$ and $UP[j,k]$ with $RI[j,j-k]$, and thus also active
with inactive variables. The $\pi$-rotations simply preserve the generating
functions, because the corresponding basis webs are invariant by virtue of the
``square switch'' web relation. Thus all symmetries considered here preserve the
conditions in Definition~\ref{def:TQC}.
\end{proof}

The choice of the rotation $r$ in
the Definition~\ref{def:TQC} of $\QT$ is immaterial. 
\begin{lem} Let $r$ be a rotation in the plane that takes $\tau \in \Tf$ to
$r(\tau)\in \Tf$. Then we have
    \[\tau \in \QTf \iff r(\tau)\in \QTf.\]
\end{lem}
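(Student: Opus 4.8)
The plan is to show that the four boundary-preserving symmetries in Lemma~\ref{lem:rotatetwo}, together with the mirror image, generate the full group of rotations of the plane modulo those that fix $\Tf$ set-wise, so that any rotation $r$ with $\tau, r(\tau)\in\Tf$ factors as a composition of symmetries already known to preserve $\QTf$. First I would observe that a rotation of the plane acts on a $4$-ended tangle diagram by permuting the four intercardinal boundary points cyclically, and that the condition $\tau\in\Tf$ (the $SW$ point oriented inward) together with $r(\tau)\in\Tf$ is a strong constraint: the only rotations sending a type-$UP$, type-$OP$, or type-$RI$ diagram with inward $SW$ to another such diagram are, up to isotopy rel boundary, the identity and the $\pi$-rotation (and, in the $RI$ case, possibly a $\pm\pi/2$ rotation that swaps the roles of $UP$ and $RI$ — but note the paper explicitly distinguishes $UP$ from $RI$ precisely so that these rotated versions land in $\QTf$ by definition, not by this lemma).

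Concretely, I would argue by cases on the boundary type of $\tau$. For type $UP$ (both left and right strands oriented upward), the $\pi$-rotation around the vertical axis is exactly $r_v$, which is handled by Lemma~\ref{lem:rotatetwo}(3); a quarter-turn would take $UP$ to $RI$, and such tangles are dealt with separately in Definition~\ref{def:TQC}(5) since $UP_{cr}$/$RI_{cr}$ tangles are required only to satisfy the condition after applying a crossing operator $C$, and one checks that $C$ and the rotation can be interchanged up to a further symmetry. For type $RI$, the relevant nontrivial rotation is the $\pi$-rotation around the horizontal axis $r_h$, covered by Lemma~\ref{lem:rotatetwo}(4). For type $OP$ (strands oppositely oriented), a $\pi$-rotation around either axis reverses both strand orientations simultaneously but preserves the inward $SW$ condition and the distinction between $OP_{ud}$ and $OP_{lr}$ is governed by connectivity, which is rotation-invariant; combining with the diagonal reflection $r_d$ from Lemma~\ref{lem:rotatetwo}(2) if necessary, one lands back in $\QTf$. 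The key point throughout is that membership in $\QTf$ is defined via the existence of active/inactive quiver forms expressed in terms of the basis webs $UP[j,k]$, $OP[j,k]$, $RI[j,k]$, and the ``square switch'' relation invoked in the proof of Lemma~\ref{lem:rotatetwo} shows these webs are invariant (up to the relabeling $k\leftrightarrow j-k$) under precisely the relevant rotations.

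I expect the main obstacle to be bookkeeping the interaction between rotations and the crossing operators $T, R$ in case (5) of Definition~\ref{def:TQC}: for a tangle of type $UP_{cr}$ or $RI_{cr}$, membership in $\QTf$ is not a direct condition on $P(\tau)$ but on $P(C\tau)$ for all four operators $C$, so I need to verify that if $r(\tau)$ has this crossing-type, then $\{C\,r(\tau)\}$ and $\{r(C'\tau)\}$ agree up to the already-established symmetries — i.e.\ that rotating a tangle and then uncrossing is, up to a symmetry from Lemma~\ref{lem:rotatetwo}, the same as uncrossing and then rotating. This amounts to a finite check: a rotation conjugates the set $\{T, T^{-1}, R, R^{-1}\}$ of crossing operators into itself (possibly composing with a global symmetry), which follows from the geometric description of $T$ and $R$ as gluing a single crossing at the $NW$/$NE$ or $SE$/$NE$ pair of endpoints and tracking where a rotation sends those pairs. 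Once this compatibility is in place, the equivalence $\tau\in\QTf \iff r(\tau)\in\QTf$ follows by induction on the number of crossing operators needed to reduce $\tau$ to one of the base types (1)--(4), with the base case handled by the symmetry discussion above.
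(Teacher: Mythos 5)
Your overall strategy (reduce to the symmetries of Lemma~\ref{lem:rotatetwo} and check compatibility with the crossing operators for the crossed connectivity types) is the right one, and your last paragraph correctly identifies the bookkeeping with $\{T,T^{-1},R,R^{-1}\}$ that the paper's one-line proof elides; note that Lemma~\ref{lem:cr-equic-cond} already makes the four conditions in item 5 equivalent, so a single conjugate suffices and no induction is needed. However, your case analysis of which plane rotations are actually relevant is incorrect, and as a result the one genuinely non-vacuous case for types $UP$ and $RI$ is not handled.

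For a type $UP$ (resp.\ $RI$) tangle the $\pi$-rotation of the plane moves the outgoing $NE$ endpoint to the $SW$ position, so $r(\tau)\notin\Tf$ and that case is vacuous; the only nontrivial rotation satisfying the hypothesis is the $\pi/2$-rotation interchanging $UP$ and $RI$. Conversely, for type $OP$ the only relevant nontrivial rotation is the $\pi$-rotation of the plane; your claim that a ``$\pi$-rotation around either axis'' preserves the inward-$SW$ condition is false ($r_v$ and $r_h$ each move an outgoing endpoint to $SW$ for $OP$), and in any case $r_v,r_h$ are rotations about axes lying \emph{in} the plane, i.e.\ reflections of the diagram rather than plane rotations, so they cannot stand in for the plane $\pi$-rotation (only their composite can). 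More importantly, you route the essential case --- the $\pi/2$-rotation taking a $UP_{par}$ tangle to an $RI_{par}$ tangle --- through item 5 of Definition~\ref{def:TQC}, which concerns only the crossed connectivity types and says nothing about $UP_{par}$ or $RI_{par}$. What is needed there is the observation that this rotation sends the basis web $UP[j,k]$ to $RI[j,j-k]$, hence exchanges the roles of active and inactive summation indices and converts an active quiver form of type $UP$ into an inactive quiver form of type $RI$; this is exactly why Definition~\ref{def:TQC} pairs $\Fa(UP)$ with $UP_{par}$ and $\Fi(RI)$ with $RI_{par}$, and it is the substantive content of the lemma. Your closing remark that the webs are invariant ``up to the relabeling $k\leftrightarrow j-k$'' gestures at this, but it is never turned into the required equivalence $\tau\in\Fa(UP)\iff r(\tau)\in\Fi(RI)$, nor into the corresponding statement for the $OP$ conditions under the $\pi$-rotation.
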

\begin{proof} The only relevant rotations are the $\pi/2$-rotation interchanging
$UP[j,k]$ and $RI[j,j-k]$ and swapping active and inactive summation indices,
and the $\pi$-rotation which preserves generating functions of type $OP$. Using
Lemma~\ref{lem:rotatetwo} it is straightforward to check that both rotations
preserve the conditions in Definition~\ref{def:TQC}.
\end{proof}

The next lemma was a key result in \cite{SW}.

\begin{lem}
    \label{lem:doublecrossingoperations}
    We have the following:
    \begin{itemize}
        \item if $\tau\in \Fa(UP)$, then $T^{\pm 2}\tau\in \Fa(UP)$, $RT\tau\in \Fi(OP)$ and $R^{\pm 1}\tau\in \Fa(UP)$,
        \item if $\tau\in \Fa(OP)$, then $T^{\pm 2}\tau\in \Fa(OP)$, $RT\tau\in \Fi(RI)$ and $R^{\pm 1}\tau\in \Fa(OP)$,
        \item if $\tau\in \Fi(OP)$, then $R^{\pm 2}\tau\in \Fi(OP)$, $TR\tau\in \Fa(UP)$ and $T^{\pm 1}\tau\in \Fi(OP)$,
        \item if $\tau\in \Fi(RI)$, then $R^{\pm 2}\tau\in \Fi(RI)$, $TR\tau\in \Fa(OP)$ and $T^{\pm 1}\tau\in \Fi(RI)$.
    \end{itemize}
\end{lem}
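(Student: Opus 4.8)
The strategy is to reduce each of the four bullet points to explicit generating-function manipulations on the partition functions, using the colored HOMFLY-PT skein relations for adding a single crossing to the trivial web basis. First I would recall from \cite{SW} the action of the crossing operators $T$ and $R$ on the basis webs $UP[j,k]$, $OP[j,k]$, $RI[j,k]$: each operator sends a basis web to a linear combination of at most two basis webs, with coefficients that are monomials (up to a sign) in $q$ and $a$ times simple $q$-binomial ratios. Composing two such operators (e.g.\ $T^2$, $R^2$, $RT$, $TR$) produces the colored Jones-type recursions, and the key point is that the resulting coefficients, when expanded using the $q$-binomial theorem / Rogers--Szeg\H{o} type identities, absorb into a new quiver-form expression of the shape \eqref{eq:active} or its inactive analogue. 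This is precisely the computation carried out in \cite{SW}; since the statement says ``this lemma was a key result in \cite{SW}'', I would organize the proof as a careful bookkeeping argument citing that reference for the core skein computations and supplying the translation into the present notation.

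Concretely, I would proceed case by case. For $\tau\in\Fa(UP)$: adding a full twist $T^{\pm2}$ to the two parallel upward strands multiplies $\langle\tau\rangle_j$ by a framing-type factor and reindexes via the standard ``twist formula'' for the $\wedge^k$-colored clasp, which has the effect of changing $S$, $A$, and adding a rank-one block to $Q$ on the active indices — hence $T^{\pm2}\tau\in\Fa(UP)$. The operator $R$ (a single crossing on one pair of strands in the parallel $UP$ configuration) similarly only shifts the active data because an $R$-crossing between two like-oriented strands does not change connectivity or boundary type; this gives $R^{\pm1}\tau\in\Fa(UP)$. The mixed composite $RT$ changes the boundary orientation from $UP$ to $OP$ and, crucially, turns what was an active summation index into an inactive one because the relevant closed-up clasp now contributes a $\qpp{|\mathbf d_i|}$ rather than $\qpp{|\mathbf d_a|}$; this is the content of $RT\tau\in\Fi(OP)$. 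The remaining three bullets are handled identically after applying the symmetries of Lemma~\ref{lem:rotatetwo}: mirroring $q\mapsto q^{-1}$ moves $T\leftrightarrow T^{-1}$, $R\leftrightarrow R^{-1}$; the diagonal reflection $r_d$ exchanges $UP\leftrightarrow RI$, $\Fa\leftrightarrow\Fi$, and $T\leftrightarrow R$; and the $\pi$-rotations fix $OP$ and $RI$ webs. So the $\Fa(OP)$ statement follows from the $\Fa(UP)$ statement by applying $r_d$, the $\Fi(OP)$ statement follows by a further reflection, and the $\Fi(RI)$ statement is the $r_d$-image of $\Fi(OP)$; one only needs to check that the composites $T^{\pm2}$, $RT$, $R^{\pm1}$ transform correctly under these symmetries, which is immediate from how $r_d$ swaps $T$ and $R$.

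The main obstacle — and the part I would not want to leave to the reader — is the bookkeeping that shows the extra $q$-Pochhammer factor $\qpp{|\mathbf d_a|}/\qpp{|\mathbf d|}^c$ (respectively $\qpp{|\mathbf d_i|}/\qpp{|\mathbf d|}^c$) survives intact under the reindexing forced by the twist formula, and in particular that $RT$ converts the \emph{active} Pochhammer into an \emph{inactive} one without introducing spurious denominators. Concretely, when $T$ caps off a pair of strands into a colored clasp and then $R$ is applied, the evaluation of the resulting colored $\Theta$-web / fusion coefficient produces a ratio of $q$-Pochhammer symbols indexed by the new summation variable, and one must verify this variable is genuinely inactive (it no longer records a label at the boundary web $X[\,\cdot\,,\cdot]$) and that the resulting expression is exactly of the form in the inactive quiver form definition. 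I would set up the single-crossing skein formulas once, record the two-crossing composites as a $2\times 2$ ``transfer matrix'' acting on pairs of $q$-series, and then read off the claimed membership from the structure of that matrix, deferring the original derivation of these formulas to \cite[Section 3]{SW}. Everything else is a routine change of summation variable plus an appeal to the $q$-binomial theorem to re-absorb the crossing coefficients into $(-q)^{S\cdot\mathbf d}a^{A\cdot\mathbf d}q^{\mathbf d\cdot Q\cdot\mathbf d^t}$.
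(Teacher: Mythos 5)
The paper's own proof of this lemma is a one-line citation (the positive-crossing cases are Lemmas 4.5--4.7 of \cite{SW}, and the negative-crossing cases are analogous), and since you also ultimately defer the core skein computations to \cite{SW}, your proposal agrees with the paper at that level. However, the surrounding structure you supply --- which is the actual mathematical content of your sketch --- contains a broken reduction. You claim the $\Fa(OP)$ bullet follows from the $\Fa(UP)$ bullet by applying $r_d$, with the remaining bullets obtained by further reflections. By the proof of Lemma~\ref{lem:rotatetwo}, $r_d$ interchanges $UP[j,k]$ with $RI[j,j-k]$ and $OP[j,k]$ with $OP[j,j-k]$, and swaps active with inactive variables; hence it carries $\Fa(UP)$ to $\Fi(RI)$ and $\Fa(OP)$ to $\Fi(OP)$ (while exchanging $T$ and $R$). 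So $r_d$ pairs bullet 1 with bullet 4 and bullet 2 with bullet 3, and no symmetry in Lemma~\ref{lem:rotatetwo} relates the $\{UP,RI\}$ orbit to the $\{OP\}$ orbit. This is not an accident: the $UP$/$RI$ cases involve crossings between parallel strands and the $OP$ cases crossings between antiparallel strands, which satisfy genuinely different skein expansions (compare the $a$-dependence in Lemma~\ref{lem:OPUP} with Lemma~\ref{lem:RIRI}). Two independent families of computations are unavoidable, so your derivation of bullet 2 from bullet 1 fails.

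Several of the mechanisms you invoke are also inaccurate. A single crossing $R$ transposes the $SE$ and $NE$ endpoints and therefore toggles the boundary type $UP\leftrightarrow OP$ --- the paper itself records in the proof of Lemma~\ref{lem:cr-equic-cond} that $R^{\pm1}$ of a $UP$ tangle is of type $OP_{lr}$ --- so your justification that ``an $R$-crossing between two like-oriented strands does not change connectivity or boundary type'' is false, and the conclusion $R^{\pm1}\tau\in\Fa(UP)$ cannot be obtained that way (a type-$OP$ tangle expands in $OP[j,k]$ webs, not $UP[j,k]$ webs). Likewise, $T^{\pm2}$ is a Dehn twist of the four-punctured sphere, not a framing factor: it acts by a triangular, not diagonal, matrix in the basis $UP[j,k]$ and strictly increases the number of summation indices (cf.\ Proposition~\ref{prop:TR}); and for color $j>1$ a single crossing expands a basis web into a sum with on the order of $j$ terms, not ``at most two''. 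None of these claims is needed if one simply cites \cite{SW}, but as written they carry the weight of your argument and would have to be removed or corrected.
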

\begin{proof} For positive crossing operations, these properties are proven in Lemma 4.5,
4.6 and 4.7 of \cite{SW}. The proof for negative crossing operations is analogous.
\end{proof}

\begin{lem}
    \label{lem:cr-equic-cond}
    If $\tau$ is of type $UP_{cr}$ or $RI_{cr}$, then we have:
    \begin{align*}
        \tau \in \QTf &\iff T\tau \in \QTf \iff T^{-1}\tau \in \QTf \\
        &\iff R\tau \in \QTf \iff R^{-1}\tau \in \QTf. 
\end{align*}
    In particular, the four conditions in item 5 of Definition~\ref{def:TQC} are equivalent.
\end{lem}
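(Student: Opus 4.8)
The plan is to reduce the five-way equivalence to the single statement $\tau \in \QTf \iff T\tau \in \QTf$, since the remaining equivalences follow by the same argument with $T$ replaced by $T^{-1}$, $R$, or $R^{-1}$ (these four operators play symmetric roles in item~5 of Definition~\ref{def:TQC}, and $R = r_d T r_d$ type relations via Lemma~\ref{lem:rotatetwo} can be invoked if one wants to literally deduce the $R$-statements from the $T$-statement). So fix $\tau$ of type $UP_{cr}$ or $RI_{cr}$. First I would record the elementary observation that applying $T$ or $T^{-1}$ to a $UP_{cr}$ tangle, or applying any of $T^{\pm 1}, R^{\pm 1}$ to an $RI_{cr}$ tangle, changes the connectivity/boundary type away from the "crossed" configurations — so $T\tau$ etc.\ is no longer of type $UP_{cr}$ or $RI_{cr}$, and hence falls under one of the first four cases of Definition~\ref{def:TQC}, where membership in $\QTf$ is governed by the active/inactive quiver form conditions $\Fa(UP), \Fa(OP), \Fi(OP), \Fi(RI)$.

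The key step is then: by Definition~\ref{def:TQC}(5), $\tau \in \QTf$ means precisely that $C\tau \in \QTf$ for \emph{all} $C \in \{T, T^{-1}, R, R^{-1}\}$. So one direction ($\tau \in \QTf \Rightarrow T\tau \in \QTf$) is immediate. For the converse, suppose $T\tau \in \QTf$. Since $T\tau$ is of one of the four non-crossed types, this means $T\tau$ lies in the appropriate $\Fa$ or $\Fi$ class. Now I would apply Lemma~\ref{lem:doublecrossingoperations}: applying $T^{-1}$, $T^{\pm 1}$, $R^{\pm 1}$, etc.\ to $T\tau$ — which recovers $\tau$, $T^2\tau$, $RT\tau$, $R^{-1}T\tau$, and so on — lands in the appropriate $\Fa$/$\Fi$ classes as well. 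Concretely, if $\tau$ is of type $UP_{cr}$ and $T\tau \in \Fa(UP)$ (this is the boundary type produced by adding a $T$-crossing to a crossed $UP$ tangle), then Lemma~\ref{lem:doublecrossingoperations} gives $T^{-1}(T\tau) = \tau$... but wait, $\tau$ has crossed type, so instead I check the \emph{other} applications: $T^{-1}T\tau$, $T^{\pm 2}T\tau$, $R^{\pm 1}T\tau$. The point is to verify that $T^{-1}\tau$, $R\tau$, $R^{-1}\tau$ all lie in their required classes; each of these equals some $C'(T\tau)$ with $C' \in \{T^{-2}, RT^{-1}, R^{-1}T^{-1}\}$ (and cyclic/mirror variants), and Lemma~\ref{lem:doublecrossingoperations} together with Lemma~\ref{lem:rotatetwo} covers exactly such compositions. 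Running through the finitely many cases (two crossed types $\times$ four operators) and matching each to a clause of Lemma~\ref{lem:doublecrossingoperations} then shows all of $T^{-1}\tau, R\tau, R^{-1}\tau \in \QTf$, hence $\tau \in \QTf$ by Definition~\ref{def:TQC}(5).

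The main obstacle I anticipate is purely bookkeeping: correctly tracking how $T$ and $R$ act on the \emph{refined} types (which boundary type and connectivity pattern results from adding a crossing to a $UP_{cr}$ or $RI_{cr}$ tangle) so that the relevant clause of Lemma~\ref{lem:doublecrossingoperations} applies with the correct $\Fa$/$\Fi$ label. One has to be careful that, e.g., $T\tau$ for $\tau$ of type $UP_{cr}$ is genuinely of type $UP_{par}$ (or $OP$, depending on orientation conventions) and not again a crossed type — this is where a small picture-chasing argument is needed, but it is the same computation already implicit in \cite{SW}. Once the type-tracking is pinned down, the logical structure is a routine consequence of Definition~\ref{def:TQC}(5) and Lemma~\ref{lem:doublecrossingoperations}, so the proof should be short.
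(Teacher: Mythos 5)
Your proposal is correct and follows essentially the same route as the paper: track how $T^{\pm1},R^{\pm1}$ convert the crossed types into $UP_{par}$, $OP_{ud}$, $OP_{lr}$, $RI_{par}$, then chain the four conditions of Definition~\ref{def:TQC}(5) together via Lemma~\ref{lem:doublecrossingoperations} (the paper runs the explicit cycle $T\tau \Leftrightarrow T^{-1}\tau \Rightarrow R\tau \Leftrightarrow R^{-1}\tau \Rightarrow T\tau$ using the $T^{\pm2}$, $RT$, $R^{\pm2}$, $TR$ clauses). The only point worth tightening is that a composite like $RT^{-1}$ is not literally a clause of Lemma~\ref{lem:doublecrossingoperations} but must be factored as $RT$ applied after $T^{-2}$, which is exactly how the paper's chain is organized.
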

\begin{proof} For the sake of concreteness, suppose $\tau$ is of type $UP_{cr}$,
and so $T^{\pm 1}\tau$ is of type $UP_{par}$ and $R^{\pm 1}\tau$ is of type
$OP_{lr}$. Then Lemma~\ref{lem:doublecrossingoperations} provides the following
equivalences and implications:
\begin{align*}
    T\tau \in \Fa(UP) 
    &\overset{T^{\pm 2}}{\iff} T^{-1}\tau \in \Fa(UP) \\
    &\overset{RT}{\implies} R\tau \in \Fi(OP) \\
    &\overset{R^{\pm 2}}{\iff} R^{-1}\tau \in \Fi(OP) \overset{TR}{\implies} T\tau \in \Fa(UP) 
\end{align*}
Any of these conditions is equivalent to all the others, and thus also to $\tau
\in \QTf$.
\end{proof}

\begin{prop}\label{prop:TCQcross} We have $T^{\pm 1}\QTf\subset \QTf$ and $R^{\pm 1}\QTf\subset \QTf$. 
\end{prop}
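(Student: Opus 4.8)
The plan is to prove $T^{\pm 1}\QTf\subset \QTf$ and $R^{\pm 1}\QTf\subset \QTf$ by a case analysis according to the refined type of a tangle $\tau\in\QTf$, using the fact that the operations $T^{\pm 1}$ and $R^{\pm 1}$ act in a predictable way on refined types and that the defining conditions of $\QTf$ in Definition~\ref{def:TQC} are controlled by Lemmas~\ref{lem:doublecrossingoperations} and~\ref{lem:cr-equic-cond}. First I would observe how each crossing operation changes the refined type: attaching a single crossing to an $UP_{par}$ tangle produces a $UP_{cr}$ tangle (for $T^{\pm1}$) or an $OP_{lr}$/$OP_{ud}$ tangle (for $R^{\pm1}$), and similarly for the other types; more precisely, $T^{\pm1}$ and $R^{\pm1}$ interchange the ``$par$/$ud$'' configurations with the ``$cr$/$lr$'' ones within a fixed boundary-type pair, while the ``$cr$'' types of item~5 are the ones on which $\QTf$-membership is already defined via the crossing operations themselves.

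The core of the argument is then four cases plus the two ``$cr$'' cases. For $\tau\in\QTf$ of type $UP_{par}$, we have $\tau\in\Fa(UP)$ by item~1; then $T^{\pm1}\tau$ is of type $UP_{cr}$, and to check $T^{\pm1}\tau\in\QTf$ we must verify one (hence, by Lemma~\ref{lem:cr-equic-cond}, all) of the four conditions in item~5, e.g. that $T(T^{\pm1}\tau)=T^{\pm2}\tau$ or $T^{-1}(T\tau)=\tau$ lies in the appropriate family; since $\tau\in\Fa(UP)$ and $T^{\pm2}\tau\in\Fa(UP)$ by Lemma~\ref{lem:doublecrossingoperations}, this holds. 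Likewise $R^{\pm1}\tau$ is of type $OP_{lr}$, and $\Fi(OP)$-membership follows because $R^{\pm1}\tau = R^{\pm1}\tau$ with $R^{-1}(R\tau)$ or $RT$ relating it back to $\tau\in\Fa(UP)$ via Lemma~\ref{lem:doublecrossingoperations} (using $RT\tau\in\Fi(OP)$ together with the $R^{\pm2}$ equivalence). The cases $OP_{ud}$ (item~2), $OP_{lr}$ (item~3), and $RI_{par}$ (item~4) are handled symmetrically, again reading off the target refined type and invoking the matching implication in Lemma~\ref{lem:doublecrossingoperations}; one should also use Lemma~\ref{lem:rotatetwo} when an operation lands in a rotated copy of one of the standard configurations. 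Finally, for $\tau$ of type $UP_{cr}$ or $RI_{cr}$, the tangles $T^{\pm1}\tau$ and $R^{\pm1}\tau$ are of type $UP_{par}$, $OP_{ud}$, $OP_{lr}$, or $RI_{par}$, and their $\QTf$-membership is exactly one of the conditions in item~5 of Definition~\ref{def:TQC} for $\tau$, which holds by hypothesis (and the remaining conditions in item~5 are then automatic by Lemma~\ref{lem:cr-equic-cond}).

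I expect the main obstacle to be purely bookkeeping: correctly matching up, in each of the six cases and for each of the four operations $T^{\pm1}, R^{\pm1}$, the resulting refined type with the correct clause of Definition~\ref{def:TQC} and the correct implication in Lemma~\ref{lem:doublecrossingoperations}, while keeping track of when an intermediate tangle is only of a standard type after a rotation or a mirror (so that Lemma~\ref{lem:rotatetwo} must be invoked). There is no new analytic content; the proof is a diagram chase through the implications already assembled in Lemmas~\ref{lem:doublecrossingoperations} and~\ref{lem:cr-equic-cond}, organized by refined type. To keep the write-up short, I would present one case in detail (say $UP_{par}$, as above) and remark that the other cases are entirely analogous, invoking the symmetries of Lemma~\ref{lem:rotatetwo} to reduce the $RI$ and $OP$ cases to the $UP$ case wherever possible.
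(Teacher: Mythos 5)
Your proposal is correct in structure and takes essentially the same route as the paper, whose entire proof of this proposition is the one-line observation that it follows from Lemmas~\ref{lem:doublecrossingoperations} and~\ref{lem:cr-equic-cond}; your case analysis by refined type is exactly the bookkeeping the paper leaves implicit, with the $cr$-type inputs and the item-5 checks handled by Lemma~\ref{lem:cr-equic-cond} as you describe. The only slip is in your worked example: for $\tau$ of type $UP_{par}$ the tangle $R^{\pm 1}\tau$ is of type $OP_{ud}$ (not $OP_{lr}$), so what is needed is an \emph{active} quiver form, supplied directly by the $R^{\pm 1}$ clause of the first bullet of Lemma~\ref{lem:doublecrossingoperations}, rather than the roundabout $RT$/$R^{\pm 2}$ derivation of an $\Fi(OP)$ form that you sketch.
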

\begin{proof} This follows from Lemma~\ref{lem:doublecrossingoperations} and
Lemma~\ref{lem:cr-equic-cond}.
\end{proof}

\begin{lem}\label{lem:QTcross} $\QT$ is closed under attaching arbitrary
crossings between pairs of neighbouring boundary points.
\end{lem}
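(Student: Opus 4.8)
\textbf{Proof plan for Lemma~\ref{lem:QTcross}.}
The statement concerns tangles in $\QT$ (arbitrary rotation), whereas Proposition~\ref{prop:TCQcross} only gives closure of $\QTf$ under $T^{\pm 1}$ and $R^{\pm 1}$, i.e.\ under attaching a crossing at the $NE$ pair of an orientation-normalised diagram. So the first step is to reduce the general statement to Proposition~\ref{prop:TCQcross}. Given $\tau\in\QT$ and a choice of a pair of neighbouring boundary points, I would first apply a rotation $r$ of the plane so that $r(\tau)\in\QTf$; by the preceding lemma this does not change membership in $\QT$. Attaching a crossing at one of the four pairs of neighbouring boundary points of $\tau$ corresponds, after applying $r$, to attaching a crossing at one of the four pairs of $r(\tau)$, i.e.\ to one of the operations ``glue a crossing at $NE$'', ``at $NW$'', ``at $SE$'', or ``at $SW$''. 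The operations $T^{\pm1}$ glue a crossing at the $NE$ pair and $R^{\pm1}$ glue one at the $SE$ pair (up to the conventions fixed in the excerpt), so two of the four cases are immediate from Proposition~\ref{prop:TCQcross}.

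The remaining two cases --- attaching a crossing at the $NW$ or the $SW$ pair --- I would handle by conjugating with the symmetries from Lemma~\ref{lem:rotatetwo}. Concretely, reflecting across the $SW$--$NE$ diagonal via $r_d$ interchanges the $NW$ pair with the $SE$ pair and fixes (setwise) the $NE$ and $SW$ pairs; combined with the $\pi$-rotations $r_v$ (for type $UP$) and $r_h$ (for type $RI$) and the mirror $-(\cdot)$, every pair of neighbouring boundary points can be moved to the $NE$ or $SE$ position by a symmetry that preserves $\QTf$. Thus: to attach a crossing at the $NW$ pair of $\tau$, apply $r_d$, attach the corresponding crossing at the $SE$ pair using $R^{\pm1}$ and Proposition~\ref{prop:TCQcross}, then apply $r_d$ again; and similarly for the $SW$ pair, possibly also using a $\pi$-rotation. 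One must check that the type ($UP$ or $RI$, as opposed to $OP$) is compatible with the symmetry being invoked --- e.g.\ that $r_v$ is only applied to $UP$-type tangles and $r_h$ only to $RI$-type ones --- but this is exactly the case distinction already built into Lemma~\ref{lem:rotatetwo}, and in the $OP$ case the relevant $\pi$-rotation is not needed because the two boundary strands already have opposite orientations so the diagonal reflection alone suffices.

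I expect the main (though still modest) obstacle to be bookkeeping: carefully matching ``attach a crossing at boundary pair $P$ of $\tau\in\QT$'' with a composite of a plane rotation $r$, one of $T^{\pm1},R^{\pm1}$ applied to $r(\tau)\in\QTf$, and a symmetry from Lemma~\ref{lem:rotatetwo}, while keeping track of orientations so that the intermediate tangles are of the boundary type for which the invoked symmetry is valid. Once the correct dictionary is set up, each of the (finitely many) cases follows formally from Proposition~\ref{prop:TCQcross} together with Lemma~\ref{lem:rotatetwo} and the rotation-invariance of the condition defining $\QTf$, with no new skein-theoretic input required.
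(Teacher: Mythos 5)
Your reduction of the crossings on the $N$ and $E$ sides to Proposition~\ref{prop:TCQcross} is fine, and for any pair containing exactly one incoming endpoint your strategy can be made to work --- though more simply than you propose: one just chooses the normalising rotation $r$ \emph{adaptively}, so that a free \emph{incoming} endpoint lands at $SW$; then the crossing pair automatically lands on the $N$ or $E$ side and $T^{\pm1}$ or $R^{\pm1}$ applies directly. This is what the paper does, and it also repairs your incorrect claim that for type $OP$ ``the diagonal reflection alone suffices'': $r_d$ swaps the $W$ and $S$ sides with each other (and $N$ with $E$), so it never carries $W$ or $S$ to a side where Proposition~\ref{prop:TCQcross} acts; what actually works for $OP$ is the $\pi$-rotation in the plane, which preserves $\Tf$ because the two incoming endpoints of an $OP$ tangle are diagonally opposite.

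The genuine gap is the case where the crossing is attached between the \emph{two incoming} boundary points (the bottom pair of a $UP$ tangle, resp.\ the left pair of an $RI$ tangle). This case cannot be reduced to Proposition~\ref{prop:TCQcross} by any combination of plane rotations and the symmetries of Lemma~\ref{lem:rotatetwo}: every move you are allowed to use must keep the diagram in $\Tf$, i.e.\ keep $SW$ incoming, and since $SW$ does not lie on the $N$ or $E$ side, those two sides of a diagram in $\Tf$ always contain at most one incoming endpoint. Hence a both-incoming pair can never be carried onto the side where $T^{\pm1}$ or $R^{\pm1}$ is attached, and your conjugation scheme provably cannot close this case. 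The paper isolates exactly this configuration and disposes of it by a separate equivalence (for a $UP$ tangle in $\QTf$, attaching a crossing between the two bottom endpoints yields a tangle in $\QTf$ if and only if attaching one at the top does), which is genuinely additional input beyond conjugating $T^{\pm1},R^{\pm1}$ by symmetries; your plan is missing this step.
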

\begin{proof}
    Let $\tau \in \QT$. Suppose the crossing is to be attached between two
    boundary points which are not both incoming. Instead of attaching the
    crossing, we first rotate the free incoming boundary point into the $SW$
    position and obtain a tangle $r(\tau)\in \QTf$. Then we attach the crossing,
    which results in a tangle $\tau'\in \QTf$ by
    Proposition~\ref{prop:TCQcross}. After rotating back, the desired tangle is
    $r^{-1}(\tau')\in \QT$.  
    
    If the crossing is to be attached between the two incoming boundary points,
    we again rotate $\tau$ until we get an $UP$-tangle $r(\tau)\in \QTf$. Then
    by Lemma~\ref{lem:rotatetwo} attaching the crossing on the bottom produces a
    tangle in $\QTf$ if and only if attaching the crossing at the top does, but
    the latter is again covered by Proposition~\ref{prop:TCQcross} and we
    proceed as before. 
\end{proof}

\begin{thm}
    \label{thm:QTdiff} $\QT$ is closed under diffeomorphisms of the 3-ball, which fix the
boundary set-wise.
\end{thm}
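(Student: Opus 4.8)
The plan is to reduce an arbitrary boundary-fixing diffeomorphism of $(B^3,\partial B^3,\{4\text{ pts}\})$ to a composition of elementary moves that we already control. The mapping class group of the 3-ball rel boundary acting on a 4-ended tangle is generated, up to isotopy, by the ``flype'' moves and, more usefully for us, by the operation of sliding a boundary arc of the tangle around the sphere $\partial B^3$. Concretely, any such diffeomorphism is isotopic to one that is a composition of the following two kinds of operations: (i) rotations of the plane (equivalently, rigid rotations of the tangle diagram), which we already handle by the definition of $\QT$ and the two lemmas showing that the choice of rotation $r$ is immaterial; and (ii) the operation that pulls one strand-end across the diagram from one side to the other — this is precisely the effect of attaching a crossing on one side and removing the ``same'' crossing on the other side, i.e. a conjugation $C \mapsto C' \circ \tau \circ C''^{-1}$ by crossing operations. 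Since $\QT$ is closed under attaching and removing arbitrary single crossings between neighbouring boundary points by Lemma~\ref{lem:QTcross} (the inverse operations $T^{-1},R^{-1}$ being covered by the same proposition), and closed under rotations, it will be closed under any composition of such moves.

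The key steps, in order, are: first, recall the standard presentation of the mapping class group $\mathrm{MCG}(B^3 \text{ rel } \partial)$ with the marked points — it is generated by half-twists interchanging adjacent boundary points on the great circle, and these half-twists act on tangles exactly by the crossing operations $T^{\pm1}, R^{\pm1}$ (in the appropriate boundary position) after a suitable rotation. Second, observe that every generator therefore maps $\QT$ to $\QT$: a half-twist at a pair of neighbouring boundary points, conjugated by the rotation bringing those points into standard position, is covered by Lemma~\ref{lem:QTcross} and Proposition~\ref{prop:TCQcross}. Third, note that $\QT$ is by construction closed under all planar rotations (this is the content of the ``immaterial rotation'' lemma). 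Fourth, combine: an arbitrary diffeomorphism is isotopic to a word in the generators, hence is a finite composition of maps each preserving $\QT$, so it preserves $\QT$.

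The main obstacle I expect is step one: pinning down the correct statement that the relevant diffeomorphism group is generated by these half-twists, and that a half-twist really does act on a tangle the way attaching a crossing does. One must be careful that ``diffeomorphisms of $B^3$ fixing $\partial B^3$ set-wise'' is the right group — it allows the four marked points to be permuted and allows rotations of the boundary sphere — and that its isotopy classes are generated by the adjacent half-twists together with the rotation subgroup; the subtlety is that a half-twist is an isotopy supported near the boundary, so attaching a crossing produces a diagram that is diffeomorphic, not identical, to the result of the half-twist, but this diffeomorphism is itself a rotation-type move and so is harmless. Once this identification is in place, the rest is a short assembly from the already-established closure properties, so the proof will be brief.
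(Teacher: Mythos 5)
Your overall strategy is the same as the paper's: isotopies rel boundary are absorbed by skein theory, and the mapping class group of the marked boundary sphere is generated by (half-)twists on adjacent boundary points, which act on tangles by attaching crossings and are therefore handled by Lemma~\ref{lem:QTcross} (itself built from Proposition~\ref{prop:TCQcross} and the rotation lemmas). That part of your argument is fine and matches the paper's one-line proof almost verbatim.

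There is one genuine gap: you only account for orientation-preserving diffeomorphisms. The theorem allows arbitrary diffeomorphisms of $(B^3,\partial B^3,\{4~pts\})$, including orientation-reversing ones (e.g.\ a reflection of the ball), and these are \emph{not} products of half-twists and rotations, all of which preserve orientation. The paper's proof explicitly lists ``mirroring'' as an additional generator of the mapping class group and disposes of it via item~1 of Lemma~\ref{lem:rotatetwo} (the mirror image $-\tau$ is again in $\QTf$, because mirroring acts by inverting $q$ and hence preserves active/inactive quiver forms). You should add this case; with it, your proof closes. The rest of your discussion --- the care about a half-twist producing a diagram only diffeomorphic, not identical, to the crossing-attached one --- is a reasonable point but is already subsumed by invariance under isotopy and planar rotation, which the definition of $\QT$ builds in.
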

\begin{proof} $\QT$ is invariant under isotopies relative to the boundary by
skein theory. Further, the corresponding mapping class group is generated by
twists on the boundary, which are covered by Lemma~\ref{lem:QTcross}, and
mirroring, which is covered by Lemma~\ref{lem:rotatetwo}.
\end{proof}

\begin{thm} $\QT$ contains all rational tangles.
\end{thm}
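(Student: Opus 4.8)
The plan is to proceed by induction on the number of crossings of the rational tangle, using the standard fact that every rational tangle is obtained from the trivial $2$-strand tangle by a finite sequence of crossing attachments at the boundary. First I would recall the base case: the trivial $2$-strand tangle, suitably oriented, is of type $UP_{par}$ (or $RI_{par}$ after rotation), and its HOMFLY-PT partition function is the trivial one, which one checks directly is in active quiver form $\Fa(UP)$ (indeed $P_j = 1$ for the relevant colored invariant gives the empty-quiver data with the single Pochhammer factor $\qpp{|\textbf{d}_a|}/\qpp{|\textbf{d}|}$ reducing correctly since $c=0$). Concretely, the trivial tangle lies in $\QTf$, hence in $\QT$.

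For the inductive step, suppose $\tau$ is a rational tangle obtained from a rational tangle $\tau'$ with one fewer crossing by attaching a single crossing between two neighbouring boundary points. By the inductive hypothesis $\tau' \in \QT$, and by Lemma~\ref{lem:QTcross} the family $\QT$ is closed under attaching an arbitrary crossing between neighbouring boundary points, so $\tau \in \QT$. This immediately yields the claim, provided the combinatorial fact about rational tangles is invoked correctly: every rational tangle arises from the trivial tangle by alternately adding horizontal and vertical twists, and each such twist is a sequence of single crossing attachments between neighbouring boundary points. I would cite Conway~\cite{Con} (or a standard reference) for this.

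The only genuine subtlety — and the step I expect to be the main obstacle to state cleanly rather than to prove — is matching the inductive construction here with the one already carried out in \cite{SW}. In \cite{SW} the partition functions of rational tangles were shown to be in the form \eqref{eq:ratgenf}, and one must check that this form is precisely captured by membership in $\QTf$ as defined in Definition~\ref{def:TQC}: that a type-$UP_{par}$ rational tangle lands in $\Fa(UP)$, a type-$OP_{ud}$ one in $\Fa(OP)$, and so on, with the active/inactive split of summation indices matching the $\textbf{d}_a/\textbf{d}_i$ split of \eqref{eq:ratgenf}. This is really a bookkeeping comparison: the extra $q$-Pochhammer factors in \eqref{eq:active} versus \eqref{eq:qf} encode exactly the $c=0$ case, and the basis-web labels $X[|\textbf{d}|,|\textbf{d}_a|]$ track the colored boundary data identically in both setups.

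Thus the proof reduces to: (i) verify the trivial tangle is in $\QTf$ (immediate, with $c=0$ and empty quiver data); (ii) invoke Lemma~\ref{lem:QTcross} for the inductive step; (iii) invoke the standard generation of rational tangles by crossing attachments. Alternatively, and perhaps more transparently, one may simply observe that \cite[Theorem 3.4]{SW} already establishes \eqref{eq:ratgenf} for all rational tangles and that, reading off the refined type, this is exactly the condition defining $\QTf$ in items 1--4 of Definition~\ref{def:TQC} — with items 5 handled by Proposition~\ref{prop:TCQcross} for tangles of crossing type. Either route is short; the substance was already done in Lemma~\ref{lem:QTcross} and in \cite{SW}.
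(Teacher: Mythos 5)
Your proposal is correct and is essentially the paper's own argument: the paper's proof is exactly the two-line observation that the trivial tangle lies in $\QT$ and every rational tangle is obtained from it by successively attaching crossings between neighbouring boundary points, which is Lemma~\ref{lem:QTcross}. Your additional remarks about matching Definition~\ref{def:TQC} with \cite[Theorem 3.4]{SW} are sensible bookkeeping but not needed beyond what the base case and Lemma~\ref{lem:QTcross} already provide.
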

\begin{proof} The trivial tangle is in $\QT$ and any rational tangle is built
from it by successively attaching crossings between pairs of neighbouring
boundary points.
\end{proof}

In Section~\ref{sec:gluing} we will prove that $\QT$ contains all algebraic
4-ended tangles.

\subsection{From tangle quivers to link quivers}
If a framed, oriented link $L$ is obtained from a tangle $\tau\in \T$ by
connecting the northern boundary points to the southern boundary points (resp.
eastern to western) by arcs in the plane, then we write $L=Cl_{NS}(\tau)$ (resp.
$L=Cl_{EW}(\tau)$) and say that $L$ is a closure of $\tau$.

\begin{lem}\label{lem:closure} Let $L$ be a framed, oriented link, obtained as
    the closure of $\tau\in \QT$, then $P(L)$ can be
    written in quiver form \eqref{eq:linkqf}. 
    \end{lem}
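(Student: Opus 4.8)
The plan is to reduce the statement to the four base cases of Definition~\ref{def:TQC} by using the rotation- and crossing-invariance established above, and then to perform a direct skein-theoretic computation of the closure in each base case.

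First I would reduce to a normal form. Given $\tau\in\QT$ with $L=Cl_{NS}(\tau)$ or $L=Cl_{EW}(\tau)$, note that applying a planar rotation to $\tau$ turns an $NS$-closure into an $EW$-closure and conversely, and by the lemma following Definition~\ref{def:TQC} such a rotation keeps the tangle in $\QTf$; so it suffices to treat, say, $L=Cl_{NS}(\tau)$ with $\tau\in\QTf$. Next, if $\tau$ is of type $UP_{cr}$ or $RI_{cr}$ (the connectivity types not directly covered by an active/inactive quiver form), then item 5 of Definition~\ref{def:TQC} together with Lemma~\ref{lem:cr-equic-cond} lets me replace $\tau$ by $T\tau\in\QTf$, which is of type $UP_{par}$ or $RI_{par}$; the $NS$-closures of $\tau$ and $T\tau$ differ only by a framing change (an extra kink absorbed into the closure arc), which alters $P(L)$ by an overall monomial in $q,a$ times a power of $x$ incorporated by shifting $S$, $A$ and rescaling — this preserves quiver form. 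Hence I may assume $\tau$ lies in one of $\Fa(UP)$, $\Fa(OP)$, $\Fi(OP)$, $\Fi(RI)$.

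Second, in each of these four cases I would compute the $NS$-closure (and, where the connectivity type makes it the natural one, the $EW$-closure) at the level of basis webs. Closing up $X[j,k]$ by connecting $NW$ to $SW$ and $NE$ to $SE$ evaluates, in the HOMFLY-PT skein module, to a colored unknot/Hopf-type contribution times a scalar; the key point — already the mechanism behind \cite[Section 4.3]{SW} — is that this scalar is precisely a ratio of $q$-Pochhammer symbols that converts the factor $\frac{\qpp{|\textbf{d}_a|}}{\qpp{|\textbf{d}|}^{c}}$ (resp. $\frac{\qpp{|\textbf{d}_i|}}{\qpp{|\textbf{d}|}^{c}}$) appearing in the active (resp. inactive) quiver form of $P(\tau)$ into the factor $\qpp{|\textbf{d}|}^{1-c'}$ demanded by \eqref{eq:linkqf} for the resulting link $L$, where $c'$ is the number of components of $L$. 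Concretely: for an active quiver form closed so that the active index counts the "new" closed component, the identity ${|\textbf{d}_a|\brack\textbf{d}_a}\qpp{|\textbf{d}_a|}$ combines with the web evaluation and the summation over the free index $k=|\textbf{d}_a|$ telescopes against ${|\textbf{d}_i|\brack\textbf{d}_i}$ to leave a single multinomial ${|\textbf{d}|\brack\textbf{d}}$ and the correct Pochhammer denominator; the sign, $q$-power and $a$-power data just update $S$, $A$, $Q$ additively. The inactive cases are handled symmetrically, with the roles of $\textbf{d}_a$ and $\textbf{d}_i$ swapped, and one uses Lemma~\ref{lem:rotatetwo}(2) to pass between them if convenient. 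Finally one checks that the matrix $Q$ can be made non-negative by a further framing change, as in the footnote to \eqref{eq:knotqf}, so that the output is genuinely of the form \eqref{eq:linkqf}.

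The main obstacle I anticipate is bookkeeping rather than conceptual: one must verify that in each of the four base types the orientation/connectivity data of $\tau$ is compatible with the chosen closure so that the free summation index really does correspond to the Pochhammer factor that needs to be cancelled, and that the component count $c'$ of $L$ matches the exponent produced by the web evaluation — for instance, an $OP_{lr}$ tangle closed the "wrong" way can produce a different $c'$ than closed the "right" way, and one has to track which closure is meant and invoke a rotation to reduce to the good case. Once the dictionary "active index $\leftrightarrow$ extra closed component created by the closure" is pinned down, the rest is the routine Pochhammer manipulation already rehearsed in \cite[Section 4.3]{SW}, and the additive updates to $(S,A,Q)$ are immediate.
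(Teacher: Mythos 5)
Your overall strategy (rotate into $\QTf$, then split by connectivity type and apply a skein closure rule) matches the paper's, and your treatment of the parallel types via the closure rule of \cite[Section 2.3.3]{SW} is essentially what the paper does. However, there is a genuine gap in your reduction of the crossed types. You claim that for $\tau$ of type $UP_{cr}$ the $NS$-closures of $\tau$ and $T\tau$ ``differ only by a framing change (an extra kink absorbed into the closure arc).'' This is false: the top crossing in $T\tau$ involves two arcs that are joined to each other only through the entire tangle $\tau$, not by a short arc, so it is not a Reidemeister I kink. Concretely, if $\tau$ is of type $UP_{cr}$ with $c$ closed components, then $Cl_{NS}(\tau)$ has $c+1$ components while $Cl_{NS}(T\tau)$ has $c+2$; already for $\tau$ a single crossing, one closure is an unknot and the other a two-component unlink. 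So the two closures are different links, and their partition functions differ by much more than a monomial shift of $(S,A,Q)$ --- in particular the required Pochhammer exponent $1-c'$ in \eqref{eq:linkqf} changes.

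The correct handling, which the paper uses, is the opposite substitution: write $\tau = T(T^{-1}\tau)$ with $T^{-1}\tau$ of type $UP_{par}$ and in \emph{active} quiver form, and then invoke the closure-after-top-crossing rule of \cite[Lemma 4.8]{SW}, which computes $P(Cl_{NS}(T\sigma))$ directly from the active quiver form of $P(\sigma)$; this rule is precisely where the extra factor $\qpp{|\textbf{d}_a|}$ of the active form is consumed to produce the correct denominator for the link with one fewer component. Without such a separate rule, the crossed case does not reduce to the parallel case. (Two minor points: the paper first rotates so that only $UP$-type closures need to be considered, which shortens the case analysis; and the non-negativity of $Q$ is not required by \eqref{eq:linkqf}, so your final framing-change step is unnecessary.)
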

    \begin{proof} 
        Up to isotopy, we may assume $L=Cl_{NS}(\tau)$ for a tangle $\tau\in
        \QTf$ of type $UP$ with $c$ closed components. This may require a change
        of framing, which, however, does not affect the existence of a quiver
        form for $P(L)$, see \cite[Section 4.3]{SW}. 

        If $\tau$ is of type $UP_{par}$, then $L$ has $c+2$ components. Applying
        the closure rule from \cite[Section 2.3.3]{SW} to $P(\tau)$ directly
        produces an expression for $P(L)$ in quiver form.  
       
        If $\tau$ is of type $UP_{cr}$, then $L$ has only $c+1$ components and
        more care is needed. In this case  $T^{-1}\tau$ is of type $UP_{par}$
        and in $\QTf$, and thus $P(T^{-1}\tau)$ may be assumed to be in active
        quiver form. Now the closure-after-top-crossing rule from \cite[Lemma
        4.8]{SW} implies that $P(L)$ can be written in quiver form. 
    \end{proof}

\section{Adding 4-ended tangles}
\label{sec:gluing}
We will now consider the binary addition operation on $4$-ended tangles, which
is given by gluing along pairs of adjacent boundary points, provided the
orientations are compatible there. 
\[+ \colon
    \left( \begin{tikzpicture} [scale=.5,anchorbase, tinynodes]
    \draw[thick] (-0.5,0) to (-0.5,1.5);
    \draw[thick] (0.5,0) to (0.5,1.5);
    \draw[thick, fill=white] (-.75,.5) rectangle (.75,1);
    \node at (0,.75) {$\tau_1$};
\end{tikzpicture}
\;,\;
 \begin{tikzpicture} [scale=.5,anchorbase, tinynodes]
    \draw[thick] (-0.5,0) to (-0.5,1.5);
    \draw[thick] (0.5,0) to (0.5,1.5);
    \draw[thick, fill=white] (-.75,.5) rectangle (.75,1);
    \node at (0,.75) {$\tau_2$};
    \end{tikzpicture}
    \right)
    \quad 
    \xrightarrow{}
    \quad
    \begin{tikzpicture} [scale=.5,anchorbase, tinynodes]
        \draw[thick] (-0.5,0) to (-0.5,1.5);
        \draw[thick] (0.5,0.5) to (0.5,1) \ur (1,1.5) \rd (1.5,1);
        \draw[thick, fill=white] (-.75,.5) rectangle (.75,1);
        \draw[thick] (0.5,.5) \dr (1,0) \ru (1.5,0.5) to (1.5,1);
        \draw[thick] (2.5,0) to (2.5,1.5);
        \draw[thick, fill=white] (1.25,.5) rectangle (2.75,1);
        \node at (0,.75) {$\tau_1$};
        \node at (2,.75) {$\tau_2$};
        \end{tikzpicture}
    \]
    Our goal for this section is to prove the following theorem.

    \begin{thm}\label{thm:mainthmbig} Let $\tau_1,\tau_2\in \QT$ with
    orientations such that $\tau_1+\tau_2$ is defined. Then  $\tau_1+\tau_2\in
    \QT$.
            \end{thm}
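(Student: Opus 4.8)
The plan is to reduce the statement about general tangles in $\QT$ to the case of tangles in $\QTf$ with specific boundary types, and then to prove a gluing formula at the level of quiver-form data. First I would use the rotation-independence of $\QT$ (established in the lemmas above) to arrange both summands in a standard position: by rotating, we may assume $\tau_1 + \tau_2$ is presented so that the new $SW$ boundary point is oriented inward, which forces $\tau_1$ (the left summand) into one of the boundary types $UP$, $OP$, or $RI$. The crucial observation is that tangle addition glues the $NE$/$SE$ boundary points of $\tau_1$ to the $NW$/$SW$ boundary points of $\tau_2$, so the orientation-compatibility condition together with the refined types ($UP_{par}$, $OP_{ud}$, $OP_{lr}$, $RI_{par}$, and the ``$cr$'' types) pins down which refined type $\tau_2$ must have. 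I would enumerate these compatible pairs; by the mirror, diagonal-reflection, and $\pi$-rotation symmetries of Lemma~\ref{lem:rotatetwo} together with the equivalences of Lemma~\ref{lem:cr-equic-cond}, the list of genuinely distinct cases should collapse to a small number.

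Next, for the ``generic'' cases where both $\tau_1$ and $\tau_2$ have a non-crossing refined type (so each lies in $\Fa$ or $\Fi$ of the appropriate boundary type), I would establish the core gluing formula. This is the technical heart: one writes $P(\tau_1)$ and $P(\tau_2)$ in their respective active/inactive quiver forms, and computes $P(\tau_1 + \tau_2)$ by composing the basic webs. The composition of webs $X[j,k]$ with $Y[j',k']$ along two strands produces, via the HOMFLY-PT skein relations (the ``square switch'' and the standard $\mathfrak{gl}_N$ web evaluations summarized in \cite{SW}), a sum over an intermediate index with coefficients involving quantum binomials and $q$-Pochhammer symbols — and the point is precisely that the extra $\qpp{|\textbf{d}_a|}$ or $\qpp{|\textbf{d}_i|}$ factors in the active/inactive forms are exactly what is needed to absorb the gluing kernel and produce a new expression of the same shape. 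Concatenating the two summation-index vectors (and reclassifying active versus inactive indices according to the boundary type of the output) gives the new $S$, $A$, and block matrix $Q$; one must check that $Q$ remains symmetric and integral, which follows because the gluing kernel contributes only diagonal and symmetric off-diagonal corrections. I would record this as the statement that $\Fa(X) + \Fa(Y) \subseteq \Fa(Z)$ (and similar) for the relevant triples $(X,Y,Z)$, reading off $Z$ and the active/inactive bookkeeping from the web composition.

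For the cases where one or both summands have a crossing-type refinement ($UP_{cr}$ or $RI_{cr}$), I would reduce to the generic case using Proposition~\ref{prop:TCQcross}: if $\tau_1$ is of type $UP_{cr}$, then $T^{-1}\tau_1 \in \QTf$ is of type $UP_{par}$ (hence in $\Fa(UP)$), and tangle addition commutes with attaching a crossing on the far side — i.e. $\tau_1 + \tau_2 = T^{-1}\bigl((T\tau_1) + \tau_2\bigr)$ after suitable identification, or more carefully one slides the crossing through the gluing region using an isotopy. Since $T^{\pm 1}\QTf \subseteq \QTf$, it suffices to prove closure under addition for non-crossing types and then close up under the crossing operations $T^{\pm 1}, R^{\pm 1}$. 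The same trick handles $\tau_2$ of crossing type by attaching the crossing on its far side. Finally, I would assemble: every pair of compatible orientations falls into the generic case after these reductions, so $\tau_1 + \tau_2 \in \QTf$, and then $\tau_1 + \tau_2 \in \QT$ by rotating back.

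The main obstacle I anticipate is the web-composition computation in the generic case: verifying that the skein-theoretic gluing of two basic webs, expanded in the basic web basis, has coefficients that exactly match the required quantum-binomial/$q$-Pochhammer structure of quiver form, and doing this uniformly across all the compatible $(X,Y,Z)$ type triples. In particular one must be careful about which colored HOMFLY-PT identities are used — the key inputs will be the decomposition of a ``$\cap\cup$'' turnback composed with the ladder webs, and this is where the distinction between $UP$ and $RI$ types (related by rotation but not identified) genuinely matters, since it controls whether the glued index becomes active or inactive. A secondary subtlety is ensuring the count of closed components $c$ transforms correctly under gluing (gluing two tangles can create a new closed component or merge open strands), so that the power of $\qpp{|\textbf{d}|}$ in the output matches Definition~\ref{def:qf}; this should follow from a straightforward case analysis of how the four glued strand-ends connect through $\tau_1$ and $\tau_2$.
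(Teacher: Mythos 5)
Your proposal is correct and follows essentially the same route as the paper: reduce to a tangle in $\QTf$ via rotation, enumerate the few orientation/connectivity configurations, prove the gluing formula for the parallel-type cases by expanding the composed basic webs and absorbing the gluing kernel into the extra $q$-Pochhammer factors of the active/inactive quiver forms, and handle the crossing-type summands by sliding crossings through the gluing region (Reidemeister II) together with closure under $T^{\pm 1}$, $R^{\pm 1}$. The only slip is the identity $\tau_1+\tau_2=T^{-1}\bigl((T\tau_1)+\tau_2\bigr)$, which is false as written since $T$ acts on different boundary points of $\tau_1$ and of the sum, but your fallback of transporting crossings by isotopy through the gluing region is exactly what the paper does (e.g.\ $R(\tau_1+\tau_2)=\tau_1+R\tau_2$ and $\tau_1+\tau_2=R\tau_1+R^{-1}r_h(\tau_2)$).
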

    
    For the purpose of deciding whether a glued tangle $\tau_1+\tau_2$ belongs
    to $\QT$ we may assume $\tau_1+\tau_2\in \Tf$. (Otherwise we would move it
    in such a position using a diffeomorphism of the 3-ball and
    Theorem~\ref{thm:QTdiff}.) With the orientation of the SW boundary point
    fixed as incoming, there exist only five orientation configurations:
\[
    \begin{tikzpicture} [scale=.5,anchorbase, tinynodes]
        \draw[thick,directed=.25,rdirected=.85] (-0.5,0) to (-0.5,1.5);
        \draw[thick,directed=.55] (0.5,1) \ur (1,1.5) \rd (1.5,1);
        \draw[thick] (0.5,0.5) to (0.5,1);
        \draw[thick, fill=white] (-.75,.5) rectangle (.75,1);
        \draw[thick,directed=.55] (0.5,.5) \dr (1,0) \ru (1.5,0.5);
        \draw[thick] (1.5,0.5) to (1.5,1);
        \draw[thick,rdirected=.2,directed=.9] (2.5,0) to (2.5,1.5);
        \draw[thick, fill=white] (1.25,.5) rectangle (2.75,1);
        \node at (0,.7) {$RI$};
        \node at (2,.7) {$RI$};
    \end{tikzpicture}\;,\quad
    \begin{tikzpicture} [scale=.5,anchorbase, tinynodes]
        \draw[thick,directed=.25,directed=.9] (-0.5,0) to (-0.5,1.5);
        \draw[thick,rdirected=.55] (0.5,1) \ur (1,1.5) \rd (1.5,1);
        \draw[thick] (0.5,0.5) to (0.5,1);
        \draw[thick, fill=white] (-.75,.5) rectangle (.75,1);
        \draw[thick,directed=.55] (0.5,.5) \dr (1,0) \ru (1.5,0.5);
        \draw[thick] (1.5,0.5) to (1.5,1);
        \draw[thick,directed=.25,directed=.9] (2.5,0) to (2.5,1.5);
        \draw[thick, fill=white] (1.25,.5) rectangle (2.75,1);
        \node at (0,.7) {$OP$};
        \node at (2,.7) {$UP$};
    \end{tikzpicture}\;,\quad
    \begin{tikzpicture} [scale=.5,anchorbase, tinynodes]
        \draw[thick,directed=.25,directed=.9] (-0.5,0) to (-0.5,1.5);
        \draw[thick,rdirected=.55] (0.5,1) \ur (1,1.5) \rd (1.5,1);
        \draw[thick] (0.5,0.5) to (0.5,1);
        \draw[thick, fill=white] (-.75,.5) rectangle (.75,1);
        \draw[thick,directed=.55] (0.5,.5) \dr (1,0) \ru (1.5,0.5);
        \draw[thick] (1.5,0.5) to (1.5,1);
        \draw[thick,rdirected=.2,rdirected=.85] (2.5,0) to (2.5,1.5);
        \draw[thick, fill=white] (1.25,.5) rectangle (2.75,1);
        \node at (0,.7) {$OP$};
        \node at (2,.7) {$OP$};
    \end{tikzpicture}\;,\quad
    \begin{tikzpicture} [scale=.5,anchorbase, tinynodes]
        \draw[thick,directed=.25,directed=.9] (-0.5,0) to (-0.5,1.5);
        \draw[thick,directed=.55] (0.5,1) \ur (1,1.5) \rd (1.5,1);
        \draw[thick] (0.5,0.5) to (0.5,1);
        \draw[thick, fill=white] (-.75,.5) rectangle (.75,1);
        \draw[thick,rdirected=.55] (0.5,.5) \dr (1,0) \ru (1.5,0.5);
        \draw[thick] (1.5,0.5) to (1.5,1);
        \draw[thick,directed=.25,directed=.9] (2.5,0) to (2.5,1.5);
        \draw[thick, fill=white] (1.25,.5) rectangle (2.75,1);
        \node at (0,.7) {$\scalebox{-1}[1]{UP}$};
        \node at (2,.7) {$\scalebox{-1}[1]{OP}$};
    \end{tikzpicture}\;,\quad
    \begin{tikzpicture} [scale=.5,anchorbase, tinynodes]
        \draw[thick,directed=.25,directed=.9] (-0.5,0) to (-0.5,1.5);
        \draw[thick,directed=.55] (0.5,1) \ur (1,1.5) \rd (1.5,1);
        \draw[thick] (0.5,0.5) to (0.5,1);
        \draw[thick, fill=white] (-.75,.5) rectangle (.75,1);
        \draw[thick,rdirected=.55] (0.5,.5) \dr (1,0) \ru (1.5,0.5);
        \draw[thick] (1.5,0.5) to (1.5,1);
        \draw[thick,rdirected=.2,rdirected=.85] (2.5,0) to (2.5,1.5);
        \draw[thick, fill=white] (1.25,.5) rectangle (2.75,1);
        \node at (0,.7) {$\scalebox{1}[1]{UP}$};
        \node at (2,1) {$\scalebox{1}[-1]{UP}$};
    \end{tikzpicture}
    \]
We will only consider the first and the second configuration. The fourth
configuration is a rotated version of the second, and the fifth is related to the third by a Reidemeister 2 move.
\begin{gather*}
    \begin{tikzpicture} [scale=.5,anchorbase, tinynodes]
        \draw[thick,directed=.25,directed=.9] (-0.5,0) to (-0.5,1.5);
        \draw[thick,directed=.55] (0.5,1) \ur (1,1.5) \rd (1.5,1);
        \draw[thick] (0.5,0.5) to (0.5,1);
        \draw[thick, fill=white] (-.75,.5) rectangle (.75,1);
        \draw[thick,rdirected=.55] (0.5,.5) \dr (1,0) \ru (1.5,0.5);
        \draw[thick] (1.5,0.5) to (1.5,1);
        \draw[thick,rdirected=.2,rdirected=.85] (2.5,0) to (2.5,1.5);
        \draw[thick, fill=white] (1.25,.5) rectangle (2.75,1);
        \node at (0,.7) {$\scalebox{1}[1]{UP}$};
        \node at (2,1) {$\scalebox{1}[-1]{UP}$};
    \end{tikzpicture}
    \;=\;
    \begin{tikzpicture} [scale=.5,anchorbase, tinynodes]
        \draw[thick,directed=.25,directed=.9] (-0.5,0) to (-0.5,1.5);
        \draw[thick,directed=.25] (0.5,1) \ur (.75,1.25) \pr (1.5,.5) \pr (2.25,1.25) \rd (2.5,1);
        \draw[thick] (0.5,0.5) to (0.5,1);
        \draw[thick, fill=white] (-.75,.5) rectangle (.75,1);
        \draw[white, line width=.15cm](.75,0.25) \pr (1.5,1) \pr (2.25,0.25) ;
        \draw[thick,rdirected=.25] (0.5,.5) \dr (.75,0.25) \pr (1.5,1) \pr (2.25,0.25)  \ru (2.5,0.5);
        \draw[thick] (2.5,0.5) to (2.5,1);
        \draw[red, dashed] (1.5,0) to (1.5,1.5);
        \draw[thick,rdirected=.2,rdirected=.85] (3.5,0) to (3.5,1.5);
        \draw[thick, fill=white] (2.25,.5) rectangle (3.75,1);
        \node at (0,.7) {$\scalebox{1}[1]{UP}$};
        \node at (3,1) {$\scalebox{1}[-1]{UP}$};
    \end{tikzpicture}
    \;\in\; 
    \begin{tikzpicture} [scale=.5,anchorbase, tinynodes]
        \draw[thick,directed=.25,directed=.9] (-0.5,0) to (-0.5,1.5);
        \draw[thick,rdirected=.55] (0.5,1) \ur (1,1.5) \rd (1.5,1);
        \draw[thick] (0.5,0.5) to (0.5,1);
        \draw[thick, fill=white] (-.75,.5) rectangle (.75,1);
        \draw[thick,directed=.55] (0.5,.5) \dr (1,0) \ru (1.5,0.5);
        \draw[thick] (1.5,0.5) to (1.5,1);
        \draw[thick,rdirected=.2,rdirected=.85] (2.5,0) to (2.5,1.5);
        \draw[thick, fill=white] (1.25,.5) rectangle (2.75,1);
        \node at (0,.7) {$OP$};
        \node at (2,.7) {$OP$};
    \end{tikzpicture}
\end{gather*}
Finally, the third configuration can be reduced to the second at the expense of twisting the boundary points.
\begin{gather*}
R\left(
    \begin{tikzpicture} [scale=.5,anchorbase, tinynodes]
    \draw[thick,directed=.25,directed=.9] (-0.5,0) to (-0.5,1.5);
    \draw[thick,rdirected=.55] (0.5,1) \ur (1,1.5) \rd (1.5,1);
    \draw[thick] (0.5,0.5) to (0.5,1);
    \draw[thick, fill=white] (-.75,.5) rectangle (.75,1);
    \draw[thick,directed=.55] (0.5,.5) \dr (1,0) \ru (1.5,0.5);
    \draw[thick] (1.5,0.5) to (1.5,1);
    \draw[thick,rdirected=.2,rdirected=.85] (2.5,0) to (2.5,1.5);
    \draw[thick, fill=white] (1.25,.5) rectangle (2.75,1);
    \node at (0,.7) {$OP$};
    \node at (2,.7) {$OP$};
\end{tikzpicture}
\right)
\;=\;
\begin{tikzpicture} [scale=.5,anchorbase, tinynodes]
    \draw[thick,directed=.25,directed=.9] (-0.5,0) to (-0.5,1.5);
    \draw[thick,rdirected=.55] (0.5,1) \ur (1,1.5) \rd (1.5,1);
    \draw[thick] (0.5,0.5) to (0.5,1);
    \draw[thick, fill=white] (-.75,.5) rectangle (.75,1);
    \draw[thick,directed=.55] (0.5,.5) \dr (1,0) \ru (1.5,0.5);
    \draw[thick] (1.5,0.5) to (1.5,1);
    \draw[thick,rdirected=.9] (2.5,.5) to (2.5,1) to [out=90,in=180](2.7,1.2) to [out=0,in=135] (3.5,.75) to [out=325,in=90] (4,0);
\draw[white, line width=.15cm] (2.7,.3) to [out=0,in=225] (3.5,.75) to [out=45,in=270] (4,1.5);
\draw[thick, directed=.9] (2.5,.5) to [out=270,in=180] (2.7,.3) to [out=0,in=225] (3.5,.75) to [out=45,in=270] (4,1.5);
    \draw[thick, fill=white] (1.25,.5) rectangle (2.75,1);
    \node at (0,.7) {$OP$};
    \node at (2,.7) {$OP$};
\end{tikzpicture}
\;\in \;
\begin{tikzpicture} [scale=.5,anchorbase, tinynodes]
    \draw[thick,directed=.25,directed=.9] (-0.5,0) to (-0.5,1.5);
    \draw[thick,rdirected=.55] (0.5,1) \ur (1,1.5) \rd (1.5,1);
    \draw[thick] (0.5,0.5) to (0.5,1);
    \draw[thick, fill=white] (-.75,.5) rectangle (.75,1);
    \draw[thick,directed=.55] (0.5,.5) \dr (1,0) \ru (1.5,0.5);
    \draw[thick] (1.5,0.5) to (1.5,1);
    \draw[thick,directed=.25,directed=.9] (2.5,0) to (2.5,1.5);
    \draw[thick, fill=white] (1.25,.5) rectangle (2.75,1);
    \node at (0,.7) {$OP$};
    \node at (2,.7) {$UP$};
\end{tikzpicture}
\end{gather*}

Theorem~\ref{thm:mainthmbig} thus follows from the following.

\begin{thm}\label{thm:mainthm} Suppose that $\tau_1,\tau_2\in \QTf$ are either
both of type $RI$, or $\tau_1$ is of type $OP$ and $\tau_2$ of type $UP$. Then we
have $\tau_1+\tau_2\in \QTf$.
    \end{thm}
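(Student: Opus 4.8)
The plan is to prove Theorem~\ref{thm:mainthm} by establishing explicit gluing formulas for the HOMFLY-PT partition functions under tangle addition, working in the basic web calculus of Definition~\ref{def:webs}. The starting point is the observation that tangle addition is computed web-theoretically: if $\tau_1$ has boundary type $OP$ and $\tau_2$ has boundary type $UP$, then $\langle \tau_1 + \tau_2 \rangle_j$ is obtained by stacking the web expansions of $\langle \tau_1 \rangle_j$ and $\langle \tau_2 \rangle_j$ along the two glued boundary points and resolving the resulting internal configuration back into the basic web basis. The key linear-algebra input is a ``merge rule'': composing an $OP[j,k_1]$ web with a $UP[j,k_2]$ web along one side produces a scalar (a ratio of $q$-Pochhammer symbols, essentially a quantum binomial coefficient in $k_1, k_2, j$) times a basic web of type $OP$ or $UP$ with a new internal label. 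I would first isolate and record this merge rule precisely — it is the 4-ended analogue of the closure rules from \cite[Section 2.3.3]{SW} and \cite[Lemma 4.8]{SW} that were already used in the proof of Lemma~\ref{lem:closure}.

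The second step is to feed the quiver-form expressions for $P(\tau_1)$ and $P(\tau_2)$ (which exist since $\tau_1, \tau_2 \in \QTf$) into this merge rule and check that the result is again of the required form \eqref{eq:qf}/\eqref{eq:active}. Here the active/inactive bookkeeping of Definition~\ref{def:qf} is essential: the summation index that gets ``contracted'' at the glued boundary — call it $k$, ranging over the shared color-of-the-cap at the two merged points — must be absorbed into the new quiver data. Concretely, writing $P(\tau_1)$ with active vector $\mathbf{d}^{(1)}$ and $P(\tau_2)$ with active vector $\mathbf{d}^{(2)}$, the glued partition function becomes a sum over the concatenated index set together with one new index accounting for $k$; the new $S$, $A$ vectors are obtained by concatenation plus a correction coming from the $(-q)$ and $a$ powers in the merge scalar, the new quiver matrix $Q$ is block-diagonal $\mathrm{diag}(Q_1, Q_2)$ with off-diagonal blocks and a new diagonal entry dictated by the quadratic term in the merge scalar, and the extra $q$-Pochhammer symbols rearrange exactly so that the combined expression sits in active (resp.\ inactive) quiver form for the correct output boundary type. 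The point of the active/inactive refinement in \cite{SW} is precisely that the ``$\qpp{|\mathbf{d}_a|}/\qpp{|\mathbf{d}|}^c$'' prefactors are what makes this absorption possible without spurious denominators; I would verify the Pochhammer arithmetic case by case using the identity $\qpp{j} = \qpp{k}\qpp{j-k}\qb{j}{k}$.

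The third step handles the boundary-type combinatorics. In the $OP + UP$ case the glued tangle is again of a standard type; in the $RI + RI$ case one merges along two boundary points with the reversed-in orientation and the output is again of type involving $RI$ or $OP$. For each of the (finitely many) refined types from the six-types definition — $UP_{par}, UP_{cr}, OP_{ud}, OP_{lr}, RI_{par}, RI_{cr}$ on each side — I would determine the refined type of the sum and check that it lands in the class prescribed by Definition~\ref{def:TQC}. When the sum has a crossing-type refined type ($UP_{cr}$ or $RI_{cr}$), one invokes item~5 of Definition~\ref{def:TQC} together with Lemma~\ref{lem:cr-equic-cond} and Proposition~\ref{prop:TCQcross}: it suffices to check one of $T^{\pm1}$ or $R^{\pm1}$ applied to the sum, which reduces to a parallel-type sum covered by the main calculation. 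The connectivity analysis also controls the number $c$ of closed components created by the gluing (a merge that joins two arcs into a closed loop raises $c$ by one), which must match the exponent of $\qpp{|\mathbf{d}|}$ in the output.

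\textbf{The main obstacle} I anticipate is not any single identity but the combinatorial explosion of cases: the merge rule itself has several variants depending on which boundary sides of $\tau_1$ and $\tau_2$ are glued and on the local orientations, and for each variant one must track four pieces of data (the type, the component count, the sign/power corrections, and the Pochhammer rearrangement) simultaneously, then confirm membership in $\QTf$. Organizing this so that the $RI+RI$ case and the $OP+UP$ case share as much machinery as possible — ideally reducing everything to a single ``universal'' merge lemma and then applying the symmetries of Lemma~\ref{lem:rotatetwo} — will be where the real work lies. A secondary subtlety is ensuring that when a new closed component is formed, the extra index $k$ genuinely produces a factor $\qpp{|\mathbf{d}|}^{-1}$ of the right total degree and not, say, $\qpp{k}^{-1}$; this is exactly the kind of thing the active/inactive formalism is designed to make automatic, but it needs to be checked against the precise form of the merge scalar in each configuration.
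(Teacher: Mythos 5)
Your overall architecture --- derive a web-level gluing rule, substitute the quiver forms of $P(\tau_1)$ and $P(\tau_2)$, then dispose of the refined connectivity types $UP_{cr}$, $RI_{cr}$, etc.\ by twisting, rotating, and invoking Lemma~\ref{lem:cr-equic-cond} --- matches the paper's. But the central analytic step has a genuine gap. You claim the glued partition function becomes a sum over ``the concatenated index set together with one new index,'' with $Q$ block-diagonal plus corrections. This cannot work, for two reasons. First, the two color sums are coupled by the constraint $|A_1|+|I_1|=|A_2|+|I_2|=j$ (the colors at the glued boundary points must agree), so after substitution one has the product $\qb{|A_1|}{A_1}\qb{|I_1|}{I_1}\qb{|A_2|}{A_2}\qb{|I_2|}{I_2}$ summed over a proper sublattice, and this is not of the form $\qb{|\mathbf{d}|}{\mathbf{d}}$ for any concatenated index vector $\mathbf{d}$. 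Second, the web-level gluing rules (Lemmas~\ref{lem:RIRI}--\ref{lem:OPOP}) do not produce a single basic web times a scalar: they produce a sum over a new parameter $t$ whose multinomial coefficients involve differences such as $|I_1|-t$ and $|I_2|-t$ of the old summation variables, which again cannot be absorbed into a multinomial over a concatenated set. Resolving both issues is the real content of the proof and requires the resummation machinery (Lemma~\ref{lemvlong} and Corollary~\ref{cor:rewrite}): one re-indexes by refinement matrices in $\N^{a_1a_2}$, $\N^{i_1i_2}$, $\N^{a_1i_2}$, $\N^{i_1a_2}$, so that the output quiver has on the order of $(a_1+i_1)(a_2+i_2)$ nodes rather than $(a_1+i_1)+(a_2+i_2)+1$. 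The paper's examples show the additive count is provably too small: the $(2,3)$-pretzel tangle is the sum of rational tangles whose quiver forms have $3+1$ and $2+1$ summation indices, and its minimal quiver form requires $11$ indices, already exceeding the $8$ your picture would allow.

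A secondary point: absorbing the leftover Pochhammer factors (for instance $\qpp{|I_1|}\qpp{|I_2|}\qb{|I_1|+|I_2|}{|I_1|,|I_2|}=\#\,\qpp{|I_1|+|I_2|}$ in the $RI_{par}$-$RI_{par}$ case, or the factor $\qp{a^2q^{\#}}{|I_1|}/\qpp{|I_1|}$ in the $OP$-$UP$ case) is done via Lemma~\ref{lemlong}, which splits summation indices and further enlarges the quiver; it is not a mere rearrangement, and it is exactly where the active/inactive distinction earns its keep (e.g.\ in the $OP_{lr}$-$UP_{par}$ case the inactive form of $P(\tau_1)$ supplies the $\qpp{|I_1|}$ needed to cancel the denominator). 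Your third step --- determining the refined type of the sum, counting closed components, and reducing the $cr$-type cases to $par$-type cases via Reidemeister~II moves, Lemma~\ref{lem:rotatetwo} and Lemma~\ref{lem:cr-equic-cond} --- is correct and coincides with the paper's case analysis.
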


The proof of this theorem will occupy the remainder of this section. For given
$\tau_1,\tau_2\in \QTf$, we know that $P(\tau_1)$ and $P(\tau_2)$ (or the
partition functions for tangles that differ by the twisting of two boundary
points) admit very special expressions, depending on the type and connectivity
of the tangle. Our task will be to show that the same holds for the glued tangle
$\tau_1+\tau_2$. 
The first step is to
compute how the skein basis elements glue.

    \subsection{Gluing of basis webs}
This section details how the horizontal gluings of basic webs from
Definition~\ref{def:webs} expand into linear combinations of basic webs. The
proofs of the lemmas here are straightforward diagrammatic computations using
the local relation in the HOMFLY-PT skein theory (see e.g. \cite[Fig. 1]{SW}) and
therefore omitted.

In the following we will encounter certain linear expressions in the
variables $k$, $l$, and $j$, which we indicate by the generic expression $\#$ if their precise
form is immaterial. Similarly we write $\#\#$ for quadratic expressions
in these variables. 

    \begin{lem}
        \label{lem:RIRI}
        The result of gluing the skein basis elements $RI[j,k]$ and $RI[j,l]$ can be
        expressed in terms of $RI$ basis elements as follows:
        \begin{gather*}
            RI[j,k] + RI[j,l] = 
         \sum_{0\leq t}  M \qb{k'+l'}{k' , l'} \qb{k'+l'-t}{t, k'-t, l'-t} RI[j,k+l-j+t]
        \end{gather*}
        where we write $k^\prime=j-k$ and $l^\prime=j-l$, and $M$ is of the form $q^{\#\#}$. Note that the non-zero
        summands occur only for $0\leq t\leq \min(k',l')$. 
    \end{lem}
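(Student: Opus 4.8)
The plan is to compute the horizontal gluing $RI[j,k]+RI[j,l]$ directly by resolving the resulting web into the $RI$ skein basis. When two $RI[j,\bullet]$ webs are glued along a pair of boundary points, the middle of the picture becomes a closed region containing two $j$-labeled strands carrying a $k'$-labeled (resp.\ $l'$-labeled) rung on one side, where $k'=j-k$ and $l'=j-l$ record the ``complementary'' multiplicities dictated by our conventions for the $RI$ basis. First I would isolate this middle region: after gluing, it is a ladder-type web with two horizontal rungs of thicknesses $k'$ and $l'$ attached to a pair of vertical $j$-strands, closed off on the glued side. The key local move is the standard HOMFLY-PT ladder relation (the ``square switch'' / associativity relation, see \cite[Fig.~1]{SW}) that lets one push one rung past the other; iterating this produces a sum over an integer $t$ measuring how much of the two rungs gets ``merged,'' with a quantum-multinomial multiplicity and a monomial weight in $q$.

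The key steps, in order, are: (i) draw the glued web and identify the closed middle region as a two-rung ladder with rung thicknesses $k'$ and $l'$ on $j$-strands; (ii) apply the rung-merging/square-switch relation to rewrite this as $\sum_{t} (\text{coefficient})\cdot(\text{single-rung ladder of thickness } k'+l'-t)$, where the coefficient is a product of two quantum multinomials — one of the form $\qb{k'+l'}{k',l'}$ coming from splitting the merged rung, and one of the form $\qb{k'+l'-t}{t,k'-t,l'-t}$ coming from the choice of the overlap — together with a monomial $M=q^{\#\#}$ absorbing all the framing/twist factors generated by the relation; (iii) read off that the resulting single-rung ladder, reinserted into the rest of the picture, is exactly $RI[j,k'']$ where $k''$ is determined by bookkeeping: the new complementary multiplicity is $(k'+l'-t)$ read against $j$, which after translating back through $k''=j-(\,\cdot\,)$ gives $k+l-j+t$, matching the statement; (iv) check the support: the multinomial $\qb{k'+l'-t}{t,k'-t,l'-t}$ vanishes unless $0\le t\le\min(k',l')$, and the bottom web label must satisfy $0\le k+l-j+t\le j$, which is automatically consistent with $0\le k,l\le j$ on the relevant range of $t$. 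Since the lemma only asserts the \emph{shape} of the coefficients (monomial $M=q^{\#\#}$, the two explicit multinomials, and the index shift), one does not need to track the precise exponent in $M$; as the excerpt notes, such diagrammatic computations are routine and the precise form of $\#\#$ is immaterial.

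The main obstacle I expect is bookkeeping the orientations and the exact index shift in step (iii): the $RI$ basis has a single free parameter $k$ with $0\le k\le j$ but the gluing naturally produces expressions in the complementary variables $k',l'$, so one must be careful that the merged-rung label translates to $k+l-j+t$ and not, say, $k+l-j-t$ or a reflected version — this is exactly the kind of sign/convention issue that the reflection symmetry $RI[j,k]\leftrightarrow RI[j,j-k]$ from Lemma \ref{lem:rotatetwo} can be used to sanity-check. A secondary technical point is confirming that all the monomial prefactors generated by successive square-switch moves really do collect into a single $q^{\#\#}$ with no stray $a$-dependence; this follows because the ladder relation used here is $a$-homogeneous (it involves no circle-removal that would introduce $a^{\pm1}$), so only quadratic-in-$(k,l,j)$ powers of $q$ appear.
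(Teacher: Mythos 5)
The paper omits the proof of this lemma, describing it as a straightforward diagrammatic computation with the local HOMFLY-PT skein relations, and your plan --- resolve the glued middle region, a two-rung ladder of thicknesses $k'$ and $l'$, via iterated square-switch/rung-merging moves into a $t$-indexed sum of single-rung webs with multinomial coefficients and a $q$-monomial (no $a$-dependence since no closed component is created) --- is exactly that computation, so the approaches agree. One small imprecision in step (iv): the condition $0\le k+l-j+t$ is not automatic but an extra restriction $t\ge j-k-l$ (equivalently $k'-t\le l$, which the authors do invoke later in the RI--RI gluing); the would-be terms violating it contribute nothing only because a web with a negative edge label is zero.
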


\begin{lem}
    \label{lem:OPUP}
    The result of gluing the skein basis elements $OP[j,k]$ and $UP[j,l]$ can be
    expressed in terms of $UP$ basis elements as follows:
    \begin{gather*}
    OP[j,k] + UP[j,l] =
    M
    \frac{\qp{a^2q^{\#}}{k^\prime}}{\qpp{k^\prime}}
    \sum_{0\leq t}   \qb{l^\prime+t}{t, k^\prime-t , l^\prime-k^\prime+t  } UP[j,l-t]
    \end{gather*}
    where we again write $k^\prime=j-k$ and $l^\prime=j-l$ and $M$ is of the
    form $a^{\#}q^{\#\#}$. Note that the non-zero summands occur only for
    $\max(0,k'-l')\leq t\leq \min(l,k')$.
\end{lem}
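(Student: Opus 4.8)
The plan is to prove the identity by a direct diagrammatic computation in the HOMFLY-PT skein theory, reducing the glued web to the $UP$ basis by eliminating the internal rail created by the gluing. First I would draw the composite web $OP[j,k]+UP[j,l]$ explicitly: identifying the two right boundary points of $OP[j,k]$ with the two left boundary points of $UP[j,l]$ fuses the reversed right rail of $OP$ with the left rail of $UP$ into a single internal ``middle'' rail, oriented so that the outer boundary (the left rail of $OP$ together with the right rail of $UP$) is exactly that of an $UP$ web. The resulting web is a three-rail ladder in which the two $k$-colored rungs of $OP$ attach the outer left rail to the middle rail, while the two $l$-colored rungs of $UP$ attach the middle rail to the outer right rail. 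The portion of the middle rail between the two $k$-rungs carries colour $k'=j-k$, which is the source of the eventual dependence on $k'$.

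Next I would eliminate the middle rail using the local relations (the merge--split, associativity/``square switch'', and digon-removal moves; see \cite[Fig.~1]{SW}). The two $l$-rungs and the two $k$-rungs bound a sequence of bigon regions along the middle rail; contracting these via the digon relation is what introduces the summation index $t$, which records how many strands of the $l$-coloured bundle are rerouted across the middle rail during the resolution. The combinatorics of distributing the available strands among the three competing groups --- those that turn back, those absorbed by the $k'$-coloured portion of the middle rail, and those that continue through --- is precisely what produces the quantum trinomial $\qb{l'+t}{t,\,k'-t,\,l'-k'+t}$ and fixes the output web as $UP[j,l-t]$.

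The $a$-dependence enters through the oppositely oriented part inherited from $OP$. After the middle rail has been resolved, the two $k$-turnbacks of $OP$ close off a $\wedge^{k'}$-coloured loop, equivalently a capped-off oppositely oriented strand of colour $k'$. Its HOMFLY-PT evaluation is the colored quantum dimension, which in the conventions of the paper is exactly $\dfrac{\qp{a^2q^{\#}}{k'}}{\qpp{k'}}$; I would read off the precise shift $\#$ in the $a^2q^{\#}$ argument from the framing and grading of this loop. All remaining contributions --- the $q$-power shifts produced by each application of merge--split and digon removal, together with the $a$- and $q$-monomials coming from the reversed orientations --- are monomial in $a$ and $q$ and can be collected into the prefactor $M=a^{\#}q^{\#\#}$. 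Finally, the stated range $\max(0,k'-l')\le t\le\min(l,k')$ is forced by non-negativity of the three entries of the trinomial ($t\ge0$, $k'-t\ge0$, $l'-k'+t\ge0$) together with the requirement $l-t\ge0$ that the output label of $UP[j,l-t]$ be admissible.

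The main obstacle I anticipate is the honest bookkeeping of the resolution: keeping track of edge colours and orientations through the square-switch and digon steps so that the trinomial coefficient comes out with exactly these three entries (rather than, say, a product of two binomials in disguise), and correctly pinning down the argument $\#$ in the $a$-Pochhammer factor produced by the reversed $OP$ loop. A useful consistency check is the extreme case $k=j$, where $k'=0$: then the Pochhammer prefactor is the empty product $1$, the constraint $k'-t\ge0$ collapses the sum to its $t=0$ term, and the identity reduces to $OP[j,j]+UP[j,l]=M\,UP[j,l]$, which can be verified directly and pins down the normalisation of $M$.
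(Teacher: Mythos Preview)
Your proposal is correct and follows exactly the approach the paper indicates: the paper omits the proof entirely, stating only that it is a ``straightforward diagrammatic computation using the local relation in the HOMFLY-PT skein theory (see e.g.\ \cite[Fig.~1]{SW}),'' and your outline---gluing the rails, resolving the middle rail via square-switch and digon moves to produce the trinomial and the closed $k'$-loop giving the $a$-Pochhammer---is precisely such a computation. Your consistency check at $k'=0$ is a nice addition beyond what the paper records.
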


In an intermediate step, we will also need a lemma for gluings of type $OP+ OP$.

\begin{lem}
    \label{lem:OPOP}
    The result of gluing the skein basis elements $OP[j,k]$ and $OP[j,l]$ can be
        expressed in terms of $OP$ basis elements as follows:
    \begin{align*}
        OP[j,k] + OP[j,l] 
        &= \sum_{0\leq t} M  \qb{l'+k'-t}{k',l'-t} \qb{l'+k'-t}{l',k'-t}   
        \frac{\qp{a^2q^{\#}}{t}}{\qpp{t}} OP[j,k+l+t-j]\\
        &=  \sum_{0\leq t} M  \qb{l'+k'-t}{t,k'-t,l'-t}    
        \frac{\qpp{k'+l'-t}}{\qpp{k'}\qpp{l'}} \qp{a^2q^{\#}}{t} OP[j,k+l+t-j]
    \end{align*}
    where we again write $k^\prime=j-k$ and $l^\prime=j-l$, and $M$, $M'$ are of
    the form $a^{\#}q^{\#\#}$. Note that the non-zero summands occur only for
    $\max(0,j-k-l)\leq t\leq \min(k',l')$.
\end{lem}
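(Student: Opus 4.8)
The plan is to compute the glued web $OP[j,k]+OP[j,l]$ directly via the local HOMFLY-PT skein relations, following the same strategy as in Lemma~\ref{lem:RIRI} but accounting for the turnback that is characteristic of gluing two $OP$-webs. First I would draw the glued diagram and identify its internal structure: after connecting the right two endpoints of $OP[j,k]$ to the left two endpoints of $OP[j,l]$, the down-oriented right strand of the first web, the up-oriented left strand of the second web, and the two connecting arcs assemble into a single internal $j$-colored loop, off which hang the two $k$-labeled rungs (joining it to the left boundary strand) and the two $l$-labeled rungs (joining it to the right boundary strand). Writing $k'=j-k$ and $l'=j-l$ for the complementary labels, the task is to absorb this internal loop and re-express the result as a linear combination of the basic webs $OP[j,\cdot]$.

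The combinatorial skeleton comes from sliding and merging the rungs along the internal loop, and is identical to that of Lemma~\ref{lem:RIRI}. Using associativity of the trivalent vertices together with the square-switch relation, I would push the two $k'$-parts and the two $l'$-parts past each other; the number of ways in which a shared portion of size $t$ is retained on the internal loop (while the remaining $k'-t$ and $l'-t$ parts pass through to the boundary strands) is governed by quantum multinomial coefficients. This produces a sum over $0\leq t$ whose combinatorial weight is exactly $\qb{l'+k'-t}{k',l'-t}\qb{l'+k'-t}{l',k'-t}$, with output web $OP[j,k+l+t-j]$, whose complementary label $2j-k-l-t=k'+l'-t$ matches the top entry of the multinomials.

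The genuinely new ingredient relative to Lemma~\ref{lem:RIRI}, and the step I expect to be the main obstacle, is the evaluation of the remaining internal loop, which now carries the residual color $t$. Because this loop is a turnback it is not transparent to the variable $a$, exactly as in the $OP+UP$ gluing of Lemma~\ref{lem:OPUP}: evaluating it produces the HOMFLY-PT quantum dimension of a $t$-colored component, a factor of the form $\frac{\qp{a^2 q^{\#}}{t}}{\qpp{t}}$. The delicate bookkeeping is to pin down the precise internal $q$-shift $\#$ inside this Pochhammer, together with the accumulated monomial prefactor $M=a^{\#}q^{\#\#}$ coming from the framing and from the $q$-gradings picked up while sliding the rungs past each other. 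Collecting these contributions yields the first displayed formula.

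Finally, the second displayed formula follows from the first by the purely algebraic identity of quantum multinomials
\[
\qb{l'+k'-t}{k',l'-t}\,\qb{l'+k'-t}{l',k'-t}\,\frac{1}{\qpp{t}}
=\qb{l'+k'-t}{t,k'-t,l'-t}\,\frac{\qpp{k'+l'-t}}{\qpp{k'}\,\qpp{l'}},
\]
which is immediate after expanding both sides into $q$-Pochhammer symbols. The stated summation range $\max(0,j-k-l)\leq t\leq \min(k',l')$ is then read off directly: the multinomials vanish unless $0\leq t\leq \min(k',l')$, and the output web $OP[j,k+l+t-j]$ requires $k+l+t-j\geq 0$, equivalently $t\geq j-k-l$.
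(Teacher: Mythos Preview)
Your proposal is correct and follows exactly the approach the paper indicates: the paper itself omits the proof of this lemma, stating only that it is a ``straightforward diagrammatic computation using the local relations in the HOMFLY-PT skein theory'', and your outline supplies precisely such a computation, correctly identifying the internal $j$-colored loop, the rung-sliding combinatorics paralleling Lemma~\ref{lem:RIRI}, and the residual $t$-colored circle whose evaluation accounts for the factor $\qp{a^2q^{\#}}{t}/\qpp{t}$. The algebraic identity you give for passing between the two displayed forms is also correct upon expanding into $q$-Pochhammer symbols.
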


\subsection{Some quantum algebra}
This section introduces notation and lemmas that are useful in manipulating
expressions of HOMFLY-PT partition functions.

Here we will use capital letters such as $A$ to denote tuples of integers (typically
non-negative) $A=(A_1,\dots, A_a)$, whose elements are indicated by subscripts
(sometimes more than one). We use corresponding lower-case letters $a:=\#A$ to denote
their cardinalities and the notation $|A|=\sum_{x\in A} A$. 
 If $A$ and $B$ are such that $\# A=\# B$, then $A\leq B$ means $A_i\leq B_i$
for $1\leq i\leq \# A$ and $0\leq A$ means that $0\leq A_i$ for $1\leq i\leq \#
A$.

We use two basic lemmas: the first one is used for splitting $|A|$ into two
smaller pieces, and another one is used for resummations where we have two sets
of summation indices $A$ and $B$ such that $|A|=|B|$.

\begin{lem}\label{lem-first}
Let $A=(A_1,\dots, A_a)\in \N^a$, and $0\leq t\leq |A|$. Then we have

\[
   \qb{|A|}{A}= \sum_{\substack{At\in \N^a}
   }
    q^{\#}\qb{t}{At}\qb{|A|-t}{At^c}.
\]
where $At^c=(At^c_1,\dots, At^c_a):=(A_1-At_1,\dots, A_a-At_a)$.

\end{lem}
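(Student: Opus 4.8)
The plan is to recognize the claimed identity as a "Vandermonde-type" splitting of a quantum multinomial coefficient according to how a fixed total weight $t$ is distributed among the $a$ blocks. Concretely, I would first rewrite $\qb{|A|}{A}$ as the coefficient extraction / counting object it represents: the quantum multinomial $\qb{|A|}{A_1,\dots,A_a}$ counts (with a $q$-power weighting by inversions/area) the lattice paths, or equivalently the ways of shuffling $a$ blocks of sizes $A_1,\dots,A_a$ into a word of length $|A|$. Splitting the word into its first $t$ letters and its last $|A|-t$ letters, the first part uses up $At_i$ letters of block $i$ and the second part uses up $A_i-At_i=At^c_i$ letters, for some $At=(At_1,\dots,At_a)\in\N^a$ with $0\le At\le A$ and $|At|=t$. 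The number of ways (with $q$-weight) to arrange the first part is $\qb{t}{At}$, the second part $\qb{|A|-t}{At^c}$, and there is a monomial correction $q^{\#}$ accounting for the inversions between a letter in the first part and a letter in the second part coming from different blocks — this depends only on $A$, $t$, and $At$, and its exact form is immaterial, hence the $\#$.

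The key steps, in order: (1) State the combinatorial model for the quantum multinomial coefficient (Gaussian/$q$-multinomial as a generating function for multiset permutations weighted by the number of inversions), citing the standard fact. (2) Perform the deterministic bijection between multiset permutations of $\{1^{A_1},\dots,a^{A_a}\}$ and pairs consisting of a choice of $At\le A$ with $|At|=t$, a multiset permutation of the "prefix" of content $At$, and a multiset permutation of the "suffix" of content $At^c$. (3) Track how inversions split: inversions entirely within the prefix give $\qb{t}{At}$, inversions entirely within the suffix give $\qb{|A|-t}{At^c}$, and cross inversions (between an element of the prefix and an element of the suffix in different blocks) contribute the factor $q^{\#}$ that depends only on $(A,t,At)$. (4) Sum over all admissible $At$; the summands with $At\not\le A$ or $|At|\ne t$ vanish because the corresponding $q$-multinomials vanish or are not present, so writing $\sum_{At\in\N^a}$ is harmless. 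Alternatively, step (2)–(3) can be replaced by an inductive argument: split off one block at a time using the classical $q$-Vandermonde/Pascal recursion $\qb{m}{r}=q^{r}\qb{m-1}{r}+\qb{m-1}{r-1}$ (suitably generalized), which avoids the combinatorial model entirely and may be cleaner to write.

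The main obstacle I expect is pinning down — or rather, legitimately \emph{not} pinning down — the monomial factor $q^{\#}$: one must argue that the cross-inversion count is a well-defined function of $(A,t,At)$ alone (not of the particular permutations), which is exactly what makes the factorization into a product of two independent $q$-multinomials valid. In the combinatorial model this is immediate (cross inversions between block $i$ in the prefix and block $j\ne i$ in the suffix are either all present or all absent depending only on whether $i<j$), but it should be stated carefully. In the inductive approach the analogous point is that peeling off block $a$ introduces a clean $q$-power by the standard recursion, and the bookkeeping of accumulated exponents stays linear in the indices, consistent with the $\#$ notation introduced just before the lemma. Since the paper only needs the \emph{shape} of the identity (the exact exponents being immaterial throughout Section~\ref{sec:gluing}), I would keep the treatment of $q^{\#}$ lightweight and refer to the standard theory of Gaussian multinomials rather than recomputing exponents.
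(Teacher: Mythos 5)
Your argument is correct, but it is not the route the paper takes: the paper gives no standalone proof of Lemma~\ref{lem-first} at all --- it simply observes that the identity is the special case of Lemma~\ref{lemvlong} (quoted from Lemma 4.6 of \cite{KRSSlong}) in which one of the two tuples is taken to be the pair $(t,|A|-t)$, so that the refinement $AB\in\N^{2a}$ with column sums $A_\beta$ collapses to the pair $(At,At^c)$. Your inversion-counting proof is a legitimate, self-contained alternative: the quantum multinomial (in the variable $q^2$) is the inversion generating function for words of content $A$; cutting such a word after its first $t$ letters is a bijection onto triples (prefix content $At$, prefix word, suffix word); and the cross-inversions contribute exactly $q^{2\sum_{c>c'}At_c\,At^c_{c'}}$, which depends only on $(A,At)$ and not on the two sub-words. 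That last point is precisely what makes the factorization into $\qb{t}{At}\qb{|A|-t}{At^c}$ valid, and you isolate it correctly; your observation that terms with $|At|\neq t$ or $At\not\leq A$ drop out matches the paper's own remark after the lemma. What the paper's route buys is economy --- Lemma~\ref{lemvlong} is needed anyway for Lemma~\ref{lem-resum} and the rewriting lemmas, so the special case comes for free; what your route buys is self-containedness and an explicit exponent. One cosmetic caveat: the exponent you compute is quadratic in the summation indices (a sum of products $At_c\,At^c_{c'}$), so under the convention set up before Lemma~\ref{lem:RIRI} it is of type $\#\#$ rather than $\#$; the paper itself writes $q^{\#}$ here and declares the exact form immaterial, so nothing downstream is affected, but your side remark that the accumulated exponents ``stay linear'' is not literally accurate.
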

The non-zero contributions to the above sum are for tuples $At:=(At_1,\dots,
At_a)$ such that $|At|=t$ and $At^c\in \N^a$, since otherwise the multinomial
coefficients would be zero. The notation $At$ and $At^c$ is intended to indicate
that the entries of these tuple refine the entries of $A$ into components that
do or do not contribute to $t$, respectively.  Lemma~\ref{lem-first} is a
special case of the following.

\begin{lem}[Lemma 4.6 \cite{KRSSlong}]\label{lemvlong}
Let $A=(A_1,\ldots,A_a)\in \N^a$, $B=(B_1,\ldots,B_b)\in \N^b$ with
$|A|=|B|$. Then we have:
\begin{equation}\label{forexp}
\!\!\!\qb{|A|}{B}=
\sum_{\substack{AB\in \N^{ab}\\
|AB_{*,\beta}| = B_\beta
}
}
q^{\#}
\qb{A_1}{AB_{1,1},\ldots,AB_{1,b}}
\cdots \qb{A_a}{AB_{a,1},\ldots,AB_{a,b}},
\end{equation}
\end{lem}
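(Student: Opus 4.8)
The plan is to prove \eqref{forexp} by induction on $b$, the number of entries of $B$ (equivalently, the number of columns of the matrix $AB$), using as the only inputs the ordinary $q$-Vandermonde identity for Gaussian binomials and the elementary factorisation $\qb{n}{k_1,\dots,k_b}=\qb{n}{k_1,\,n-k_1}\,\qb{n-k_1}{k_2,\dots,k_b}$. Note first that although the sum in \eqref{forexp} only imposes the column conditions $|AB_{*,\beta}|=B_\beta$ explicitly, the matching row condition $\sum_\beta AB_{\alpha,\beta}=A_\alpha$ is automatic, since otherwise the factor $\qb{A_\alpha}{AB_{\alpha,1},\dots,AB_{\alpha,b}}$ vanishes; thus $AB$ really ranges over contingency tables with row sums $A$ and column sums $B$.

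For the base case $b=1$ we have $B=(|A|)$; the left side is $\qb{|A|}{|A|}=1$, and on the right the only $AB\in\N^{a}$ with $|AB_{*,1}|=|A|$ for which all factors $\qb{A_\alpha}{AB_{\alpha,1}}$ are nonzero is $AB_{\alpha,1}=A_\alpha$, contributing $q^{0}$. As a preliminary I would also establish the two-column case: writing $C_\alpha:=AB_{\alpha,1}$ and using $\qb{|A|}{B_1,B_2}=\qb{|A|}{B_1,\,|A|-B_1}$, it reads
\[
\qb{|A|}{B_1,\,|A|-B_1}=\sum_{\substack{C\in\N^{a}\\ |C|=B_1}}q^{\#}\prod_{\alpha=1}^{a}\qb{A_\alpha}{C_\alpha,\,A_\alpha-C_\alpha}.
\]
This follows by a secondary induction on $a$: split the top entry as $|A|=A_1+(A_2+\cdots+A_a)$, apply $q$-Vandermonde once to peel off $\qb{A_1}{C_1,\,A_1-C_1}$, and invoke the inductive hypothesis on the remaining binomial $\qb{A_2+\cdots+A_a}{B_1-C_1,\,\cdots}$.

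For the inductive step $b-1\Rightarrow b$ I would start from
\[
\qb{|A|}{B_1,\dots,B_b}=\qb{|A|}{B_1,\,|A|-B_1}\ \qb{|A|-B_1}{B_2,\dots,B_b},
\]
expand the first factor by the two-column case, introducing $C=(C_1,\dots,C_a)$ with $|C|=B_1$ so that $|A|-B_1=\sum_\alpha(A_\alpha-C_\alpha)$, and then apply the inductive hypothesis to the second factor with the tuple $A':=(A_\alpha-C_\alpha)_{\alpha}$ in place of $A$ (legitimate since $|A'|=|A|-B_1=B_2+\cdots+B_b$), introducing a matrix $(AB_{\alpha,\beta})_{2\le\beta\le b}$ with column sums $B_\beta$ and row sums $A_\alpha-C_\alpha$. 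For each $\alpha$ the factorisation identity collapses $\qb{A_\alpha}{C_\alpha,\,A_\alpha-C_\alpha}\,\qb{A_\alpha-C_\alpha}{AB_{\alpha,2},\dots,AB_{\alpha,b}}$ into $\qb{A_\alpha}{C_\alpha,AB_{\alpha,2},\dots,AB_{\alpha,b}}$; setting $AB_{\alpha,1}:=C_\alpha$ and combining the constraints $|C|=B_1$ and (column sums $B_\beta$ for $\beta\ge2$) into $|AB_{*,\beta}|=B_\beta$ for all $\beta$, the re-indexed double sum over $AB\in\N^{ab}$ is exactly the right-hand side of \eqref{forexp}.

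The step I expect to be fiddly, rather than deep, is tracking the power of $q$: each use of $q$-Vandermonde and of the factorisation identity contributes a monomial, and one must check that these multiply into a single exponent $\#$ that is integer-linear in the entries of $AB$ (and of $A,B$) with no residual cross-terms, so that the final re-indexing is valid. This is the only real content, and it is precisely what the statement abbreviates to $q^{\#}$; since downstream only the \emph{existence} of such a power is used, one can carry the exponents along symbolically without committing to a closed form. An alternative in which the power is transparent is a statistic proof: realise $\qb{n}{k_1,\dots,k_b}$ as the inversion generating function of words of content $(k_1,\dots,k_b)$, and note that regrouping a word's letters into $a$ consecutive blocks splits the inversions into within-block inversions, which reassemble to $\prod_\alpha\qb{A_\alpha}{AB_{\alpha,*}}$, and between-block inversions, which produce $q^{\#}$. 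In either case one recovers \cite[Lemma 4.6]{KRSSlong}.
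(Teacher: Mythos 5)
Your proof is correct. Note that the paper does not actually prove this lemma: it is imported verbatim by citation as \cite[Lemma~4.6]{KRSSlong}, so there is no internal argument to compare against. Your induction on $b$ --- peeling off one column via the two-variable $q$-Vandermonde identity (itself established by a secondary induction on $a$), applying the inductive hypothesis to the residual tuple $A'=(A_\alpha-C_\alpha)_\alpha$, and recombining $\qb{A_\alpha}{C_\alpha,\,A_\alpha-C_\alpha}\qb{A_\alpha-C_\alpha}{AB_{\alpha,2},\dots,AB_{\alpha,b}}$ into a single multinomial --- is the standard derivation and is essentially what the cited reference does; your observation that the row-sum constraints $|AB_{\alpha,*}|=A_\alpha$ are automatic (the paper states this explicitly right after the lemma) is also correct. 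The one inaccuracy is your parenthetical claim that the accumulated exponent of $q$ is \emph{integer-linear} in the entries of $AB$ with no cross-terms: each application of $q$-Vandermonde contributes a product of two summation indices, so the final exponent is genuinely quadratic in the entries of $AB$ (a sum of products $AB_{\alpha,\beta}AB_{\alpha',\beta'}$). This is harmless --- the paper's $q^{\#}$ in this lemma must be read as ``some monomial with exponent polynomial in the indices,'' and downstream these factors are absorbed into the monomials $M$, whose $q$-exponents are explicitly allowed to be quadratic --- but you should not assert linearity. The inversion-statistic argument you sketch as an alternative is also valid and makes the quadratic exponent (the between-block inversions) transparent.
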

The non-zero contributions to the above sum are for tuples $AB\in \N^{ab}$ with
entries $AB_{\alpha,\beta}$, with $1\leq \alpha\leq a$ and $1\leq \beta\leq b$,
such that $|AB_{\alpha,*}| = A_\alpha$ and $|AB_{*,\beta}| = B_\beta$. Also note
that $|AB|=|A|=|B|$. Here the notation $AB$ is intended to indicate a refinement
of the entries of the tuple $A$ according to the contribution to the entries of
$B$. 

As a corollary, we get the following.
\begin{lem}\label{lem-resum}
Let $A\in \N^a$ and $B\in \N^b$ with $|A|=|B|$. Then
\[
\!\!\!\qb{|A|}{A}\qb{|B|}{B}=
\sum_{\substack{AB\in \N^{ab}\\
|AB_{\alpha,*}| = A_\alpha\\
|AB_{*,\beta}| = B_\beta
}
}
q^{\#}\qb{|A|}{AB}
\]
\end{lem}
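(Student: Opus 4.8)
\textbf{Proof plan for Lemma~\ref{lem-resum}.}

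The plan is to deduce the identity by applying Lemma~\ref{lemvlong} to the product on the left-hand side, using the shared constraint $|A|=|B|$ to merge the two multinomial coefficients into a single one indexed by a matrix-shaped tuple. First I would rewrite $\qb{|A|}{A}\qb{|B|}{B}$ and apply Lemma~\ref{lemvlong} in the form \eqref{forexp} to the second factor $\qb{|B|}{B}=\qb{|A|}{B}$, expanding it as a sum over tuples $AB\in \N^{ab}$ subject to $|AB_{*,\beta}|=B_\beta$, with the $\alpha$-th group of entries $(AB_{\alpha,1},\dots,AB_{\alpha,b})$ summing to $A_\alpha$, weighted by products of multinomial coefficients $\prod_{\alpha=1}^a \qb{A_\alpha}{AB_{\alpha,1},\dots,AB_{\alpha,b}}$ and a power $q^{\#}$. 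This gives an expression in which, for each fixed admissible $AB$, I must recombine the leftover factor $\qb{|A|}{A}$ with $\prod_\alpha \qb{A_\alpha}{AB_{\alpha,*}}$.

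The key step is then the elementary multinomial identity
\[
\qb{|A|}{A}\prod_{\alpha=1}^{a}\qb{A_\alpha}{AB_{\alpha,1},\dots,AB_{\alpha,b}} = \qb{|A|}{AB},
\]
valid whenever $|AB_{\alpha,*}|=A_\alpha$ for all $\alpha$, which is just the transitivity of multinomial refinement: choosing an unordered partition of $|A|$ objects into groups of sizes $A_1,\dots,A_a$ and then subdividing the $\alpha$-th group into subgroups of sizes $AB_{\alpha,1},\dots,AB_{\alpha,b}$ is the same as directly partitioning into the $ab$ groups of sizes $AB_{\alpha,\beta}$. Here $\qb{|A|}{AB}$ denotes the quantum multinomial with the $ab$ lower entries $AB_{\alpha,\beta}$, and the identity holds at the quantum level because $\qpp{\cdot}$ telescopes in exactly the same way; the power-of-$q$ prefactors $q^{\#}$ arising from \eqref{forexp} and from the (possibly needed) reordering of quantum multinomial factors are absorbed into the $q^{\#}$ on the right-hand side, whose precise exponent is immaterial by the conventions of this section. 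After this substitution the sum is exactly over tuples $AB\in\N^{ab}$ with $|AB_{\alpha,*}|=A_\alpha$ and $|AB_{*,\beta}|=B_\beta$, which is the claimed right-hand side.

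I do not expect any serious obstacle: the only points requiring a little care are bookkeeping ones — checking that the index set produced by Lemma~\ref{lemvlong} (tuples satisfying the column constraints $|AB_{*,\beta}|=B_\beta$, with the row sums automatically equal to $A_\alpha$ by construction) matches the stated double constraint, and verifying that the $q$-power prefactors combine correctly without sign issues (they do, since all quantum binomials here are the unbalanced $\qpp{\cdot}$-normalized ones and no negative powers intervene). The genuinely content-bearing ingredient is Lemma~\ref{lemvlong} itself, which is quoted from \cite{KRSSlong}; granting it, Lemma~\ref{lem-resum} is a one-line corollary modulo the transitivity identity above.
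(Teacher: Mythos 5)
Your proposal is correct and matches the paper's intent: the paper states Lemma~\ref{lem-resum} as an immediate corollary of Lemma~\ref{lemvlong}, and the derivation is exactly the one you give — expand $\qb{|B|}{B}=\qb{|A|}{B}$ via \eqref{forexp} and contract $\qb{|A|}{A}\prod_{\alpha}\qb{A_\alpha}{AB_{\alpha,*}}=\qb{|A|}{AB}$ by telescoping the $\qpp{\cdot}$ factors. Note in fact that this last contraction holds exactly with no extra $q$-power, so the only $q^{\#}$ in the result is the one inherited from Lemma~\ref{lemvlong}.
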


Next we need the $q$-Pochhammer symbol $(x^2;q^2)_k=
\prod_{i=0}^{k-1} (1-x^2q^{2i})$, which generalises $(q^2;q^2)_k=\qpp{k}$.

\begin{lem}[{\cite[Lemma 4.5]{KRSSlong}}]\label{lemlong}
    For any $d_1,\ldots,d_k\ge 0$, we have:
    \begin{align*}\frac{(x^2;q^2)_{d_1+\ldots+d_k}}{\qpp{d_1}\cdots \qpp{d_k}}
    =& \!\!\!  \sum\limits_{\substack{\alpha_1+\beta_1=d_1\\\cdots\\\alpha_k+\beta_k=d_k}}\!\!\!\!\!\!\!\!  
    \frac{(-x^2 q^{-1})^{{\alpha_1+\ldots+\alpha_k}} q^{\alpha_1^2+\ldots+\alpha_k^2+2\sum_{i=1}^{k-1} \alpha_{i+1} (d_1+\ldots+d_i)}}{\qpp{\alpha_1}\cdots\qpp{\alpha_k}\qpp{\beta_1}\cdots\qpp{\beta_k}}      
    \end{align*}
    \end{lem}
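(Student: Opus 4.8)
The plan is to prove the last statement, Lemma \ref{lemlong} from \cite{KRSSlong}, by an induction on $k$, just as in the source paper. The base case $k=1$ asserts that
\[
    \frac{(x^2;q^2)_{d_1}}{\qpp{d_1}} = \sum_{\alpha_1+\beta_1=d_1} \frac{(-x^2q^{-1})^{\alpha_1} q^{\alpha_1^2}}{\qpp{\alpha_1}\qpp{\beta_1}},
\]
which is exactly the $q$-binomial theorem in its finite form: $(x^2;q^2)_{d_1} = \sum_{\alpha_1=0}^{d_1} \qbin{d_1}{\alpha_1} (-x^2)^{\alpha_1} q^{\alpha_1(\alpha_1-1)}$, after dividing by $\qpp{d_1}$ and writing the $q$-binomial as $\qpp{d_1}/(\qpp{\alpha_1}\qpp{\beta_1})$. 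Verifying $q^{\alpha_1(\alpha_1-1)} (q^{-1})^{\alpha_1} = q^{\alpha_1^2-2\alpha_1}$ versus the claimed $q^{\alpha_1^2}(q^{-1})^{\alpha_1}=q^{\alpha_1^2-\alpha_1}$ is a small bookkeeping check that must be done against the precise conventions for $(x^2;q^2)_k$ used here; I expect the exponents to match once the indexing of the Pochhammer product is pinned down.

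For the inductive step, I would split off the last variable: write $d_1+\dots+d_k = (d_1+\dots+d_{k-1}) + d_k$ and use the standard $q$-Pochhammer product rule
\[
    (x^2;q^2)_{D+d_k} = (x^2;q^2)_{D}\,\cdot\,(x^2q^{2D};q^2)_{d_k}, \qquad D:=d_1+\dots+d_{k-1}.
\]
Apply the inductive hypothesis to the first factor divided by $\qpp{d_1}\cdots\qpp{d_{k-1}}$, and apply the $k=1$ base case (with $x^2$ replaced by $x^2q^{2D}$) to the second factor divided by $\qpp{d_k}$. This produces a double sum over $(\alpha_1,\dots,\alpha_{k-1})$ and $\alpha_k$, and the task is to reconcile the resulting power of $q$ with the claimed exponent $\alpha_1^2+\dots+\alpha_k^2 + 2\sum_{i=1}^{k-1}\alpha_{i+1}(d_1+\dots+d_i)$. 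The key point is that the substitution $x^2\mapsto x^2q^{2D}$ in the base case contributes an extra factor $q^{2D\alpha_k} = q^{2\alpha_k(d_1+\dots+d_{k-1})}$, which is precisely the $i=k-1$ term of the sum $2\sum_{i=1}^{k-1}\alpha_{i+1}(d_1+\dots+d_i)$; the remaining terms come unchanged from the inductive hypothesis, and the quadratic pieces $\alpha_j^2$ simply accumulate.

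The main obstacle is purely bookkeeping: matching the $q$-exponents exactly, including the linear-in-$\alpha_i$ correction coming from the $q^{-1}$ in $(-x^2q^{-1})$ versus the $q^{-\binom{\alpha}{2}}$-type shift inside the finite $q$-binomial theorem, and making sure the convention for $(x^2;q^2)_k = \prod_{i=0}^{k-1}(1-x^2q^{2i})$ is used consistently (in particular that $(x^2;q^2)_0 = 1$, so the induction is well-founded). Since the paper states this as a cited lemma, I would keep the write-up brief: state the base case as the finite $q$-binomial theorem, state the product rule for $q$-Pochhammer symbols, and then carry out the one-line exponent comparison in the inductive step, flagging that the only nontrivial observation is the identification of the $q^{2D\alpha_k}$ factor with the top term of the claimed double sum. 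No deeper structural difficulty is expected; the result is a standard $q$-series identity and the proof is a routine induction once conventions are fixed.
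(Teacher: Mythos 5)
Your proposal is correct. The paper itself gives no proof of this lemma (it is imported verbatim from \cite[Lemma 4.5]{KRSSlong}), and your induction on $k$ is exactly the standard argument: the base case is the finite $q$-binomial theorem in base $q^2$, namely $(x^2;q^2)_{d_1}=\sum_{\alpha_1}\qbins{d_1}{\alpha_1}(-x^2)^{\alpha_1}q^{\alpha_1(\alpha_1-1)}$, which matches the claimed summand since $(-x^2q^{-1})^{\alpha_1}q^{\alpha_1^2}=(-x^2)^{\alpha_1}q^{\alpha_1(\alpha_1-1)}$ (your displayed comparison $q^{\alpha_1(\alpha_1-1)}(q^{-1})^{\alpha_1}$ vs.\ $q^{\alpha_1^2}(q^{-1})^{\alpha_1}$ inserts the $q^{-\alpha_1}$ on the wrong side, but the exponents do agree under the paper's convention $(x^2;q^2)_n=\prod_{i=0}^{n-1}(1-x^2q^{2i})$, so this is only a typo in your sanity check). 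The inductive step via $(x^2;q^2)_{D+d_k}=(x^2;q^2)_D\cdot(x^2q^{2D};q^2)_{d_k}$ with $D=d_1+\cdots+d_{k-1}$ works exactly as you describe: the factor $q^{2D\alpha_k}$ from the shifted base case is precisely the $i=k-1$ term of $2\sum_{i=1}^{k-1}\alpha_{i+1}(d_1+\cdots+d_i)$, and everything else accumulates untouched.
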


Having recalled these basic lemmas, we now assemble them into more specialised
tools for rewriting generating functions.

\begin{lem}
    \label{lem:rewrite-one}
Given any function of the form
\[
   P= \sum_{\substack{A\in \N^a\\B\in \N^b\\t\in \N\\
    t\leq |A|,\; t\leq |B|}}
    \qb{|A|}{A}\qb{|B|}{B} f(A,B,t)
\]
we can instead sum over $ABt\in \N^{ab}$, $At^c\in \N^a$, and $Bt^c\in \N^b$, with
$|ABt|=t$, $|At^c|=|A|-t$, and $|Bt^c|=|B|-t$ and write
\[
 P=   \sum_{\substack{ABt\in \N^{ab}\\At^c\in \N^a \\ Bt^c\in \N^b}}
 \qb{t}{ABt}\qb{|A|-t}{At^c}\qb{|B|-t}{Bt^c} f'(ABt,At^c,Bt^c)
\]
where $f'$ arises from substitutions and a monomial factor from $f$.
\end{lem}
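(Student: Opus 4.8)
The plan is to feed the two multinomial coefficients through the splitting Lemma~\ref{lem-first}, using the same integer $t$ in both, and then to fuse the two resulting weight-$t$ coefficients by means of the resummation Lemma~\ref{lem-resum}.

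First I would apply Lemma~\ref{lem-first} to $\qb{|A|}{A}$ with the given $t\leq|A|$, and independently to $\qb{|B|}{B}$ with the same $t\leq|B|$:
\[
\qb{|A|}{A}=\sum_{At\in\N^a} q^{\#}\,\qb{t}{At}\qb{|A|-t}{At^c},
\qquad
\qb{|B|}{B}=\sum_{Bt\in\N^b} q^{\#}\,\qb{t}{Bt}\qb{|B|-t}{Bt^c},
\]
with $At^c:=A-At$ and $Bt^c:=B-Bt$, where the nonzero terms force $|At|=t=|Bt|$ and $At^c\in\N^a$, $Bt^c\in\N^b$. Substituting both expansions into $P$ turns the index set into tuples $(A,B,t,At,Bt)$ with $At\le A$, $Bt\le B$ and $|At|=|Bt|=t$. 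The product $\qb{t}{At}\qb{t}{Bt}$ that now appears is exactly of the form handled by Lemma~\ref{lem-resum} (applied with the two tuples $At$ and $Bt$, both of weight $t$), so it equals $\sum_{ABt} q^{\#}\qb{t}{ABt}$, the sum running over $ABt\in\N^{ab}$ whose row sums are $At$ and whose column sums are $Bt$.

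It remains to recognise the correct set of free summation variables. Reading the constraints $|ABt_{\alpha,*}|=At_\alpha$ and $|ABt_{*,\beta}|=Bt_\beta$ as \emph{defining} $At$ and $Bt$ out of $ABt$, the data $(A,B,t,At,Bt,ABt)$ is equivalent to the data $(ABt,At^c,Bt^c)\in\N^{ab}\times\N^a\times\N^b$: one recovers $t=|ABt|$, recovers $At$ and $Bt$ as the row and column sums of $ABt$, and then $A=At+At^c$, $B=Bt+Bt^c$; conversely these formulas reconstruct $(A,B,t)$ uniquely, and the inequalities $t\le|A|$, $t\le|B|$ of the original sum hold automatically since $|A|=t+|At^c|$ and $|B|=t+|Bt^c|$. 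Thus the substitution is a bijection of index sets, and under it the three $q^{\#}$ factors produced above, together with $f$ evaluated at the reconstructed $(A,B,t)$, combine into a single function $f'(ABt,At^c,Bt^c)$, which gives the asserted identity.

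I do not anticipate a real obstacle: the argument is pure bookkeeping with multinomial coefficients, and the only point that needs a moment's care is checking that passing from $(A,B,t)$ to $(ABt,At^c,Bt^c)$ neither creates nor loses terms, i.e.\ that it is a genuine bijection rather than merely a surjection — which is immediate once the refinement constraints are taken as the definitions of $At$ and $Bt$.
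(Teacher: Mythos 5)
Your proof is correct and follows essentially the same route as the paper: the paper splits $\qb{|A|}{A}$ via Lemma~\ref{lem-first} and then refines $B$ against the tuple $(|B|-t,At_1,\dots,At_a)$ in one application of Lemma~\ref{lemvlong}, whereas you split both multinomials symmetrically and then fuse the two weight-$t$ pieces with Lemma~\ref{lem-resum}; since Lemma~\ref{lem-resum} is itself a corollary of Lemma~\ref{lemvlong}, both arguments land on the identical index set $(ABt,At^c,Bt^c)$ and differ only in the order of bookkeeping. Your explicit verification that the change of summation variables is a bijection is a point the paper leaves implicit.
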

\begin{proof}
We first apply Lemma~\ref{lem-first} and get
\[\qb{|A|}{A} =  \sum_{\substack{At\in \N^a}
}
 q^{\#}\qb{t}{At}\qb{|A|-t}{At^c}\] 
Since we also have
$|B|=b\geq t=|At|$, we can apply Lemma~\ref{lemvlong} to the tuples $(|B|-t,At_1,\dots,At_a)$ and $B$. Thus we get

\[\qb{|B|}{B} = \sum_{\substack{ABt\in \N^{ab}\\ Bt^c\in \N^b}} q^{\#}
\prod_{\alpha=1}^a\qb{At_\alpha}{ABt_{\alpha,*}}\cdot\qb{|B|-t}{Bt^c} \]
with constraints described in Lemma~\ref{lemvlong}. In combination, these
expressions achieve the desired rewriting of $P$ as far as multinomial
coefficients are concerned. To rewrite $f(A,B,t)$, note that
every entry of $A$ (resp. $B$) can be expressed as the sum of an entry of $At^c$
(resp $Bt^c$) and $b$ (resp. $a$) entries from the tuple $ABt$. Moreover,
$t=|ABt|$. Thus $f(A,B,t)$ can be re-expressed as a function of
the entries of $ABt$, $At^c$, and $Bt^c$. The function $f'(A',B',T')$ results from this
re-expression, also taking into account the monomial scaling $\#$. 
\end{proof}

\begin{lem}
    \label{lem:rewrite-two}
   Given any function of the form
    \[
        P= \sum_{\substack{A'\in \N^a,\; B'\in \N^b\\C\in \N^c,\; D\in \N^d\\
        |C|-|A'|=|D|-|B'|=s\geq 0}}
        \qb{|C|}{C}\qb{|D|}{D} f(A',B',C,D)
    \]
    we can instead sum over $A'C\in \N^{ac}$, $B'D\in \N^{bd}$, and $S\in \N^{cd}$ and write
    \[
        P= \sum_{\substack{A'C\in \N^{ac}\\B'D\in \N^{bd}\\S\in \N^{cd}}}
        \qb{s}{S} \prod_{\alpha=1}^a \qb{A'_\alpha}{A'C_{\alpha,*}}\cdot \prod_{\beta=1}^b \qb{B'_\beta}{B'D_{\beta,*}} f'(A'C,B'D,S)
       \]
       where $f'$ arises from substitutions and a monomial factor from $f$.
    \end{lem}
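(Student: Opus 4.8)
The plan is to mirror the proof of Lemma~\ref{lem:rewrite-one}, but instead of splitting large multinomials into smaller ones via Lemma~\ref{lem-first}, I would \emph{build up} the multinomial coefficients $\qb{|C|}{C}$ and $\qb{|D|}{D}$ from smaller pieces using Lemma~\ref{lemvlong}. Concretely, since $|C|=|A'|+s$ and $|D|=|B'|+s$, I would regard the tuple $C\in\N^c$ as having total $|A'_1|+\dots+|A'_a|+s$, and apply Lemma~\ref{lemvlong} to the pair of tuples $(A'_1,\dots,A'_a,s)\in\N^{a+1}$ and $C\in\N^c$ (both have total $|C|$). This produces a refinement $A'C\in\N^{ac}$ recording how each $A'_\alpha$ distributes among the entries of $C$, together with a tuple $S_C\in\N^c$ recording how $s$ distributes among the entries of $C$, with the constraint that the column sums equal the entries of $C$. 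This yields
\[
  \qb{|C|}{C}=\sum_{A'C,\,S_C} q^{\#}\;\qb{s}{S_C}\prod_{\alpha=1}^a\qb{A'_\alpha}{A'C_{\alpha,*}},
\]
and analogously for $\qb{|D|}{D}$ with refinement $B'D\in\N^{bd}$ and $S_D\in\N^d$.

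The next step is to recognise that the tuples $S_C\in\N^c$ and $S_D\in\N^d$ both sum to $s$, so the product $\qb{s}{S_C}\qb{s}{S_D}$ can be consolidated. Here I would apply Lemma~\ref{lem-resum} to the pair $(S_C,S_D)$: this replaces the product of the two multinomials by a single sum over a matrix $S\in\N^{cd}$ with row sums $S_{C,\gamma}$ and column sums $S_{D,\delta}$, contributing $q^{\#}\qb{s}{S}$. Summing over all $S_C, S_D$ compatible with a given $S$ (equivalently, just summing over $S\in\N^{cd}$) absorbs those intermediate indices, and we are left exactly with the claimed summand $\qb{s}{S}\prod_\alpha\qb{A'_\alpha}{A'C_{\alpha,*}}\prod_\beta\qb{B'_\beta}{B'D_{\beta,*}}$ up to monomial $q$-factors.

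Finally, I would argue that $f(A',B',C,D)$ can be re-expressed as a function $f'$ of the new indices $A'C$, $B'D$, $S$. Every entry $C_\gamma$ equals $S_{C,\gamma}+\sum_{\alpha}A'C_{\alpha,\gamma}$, and by the resummation $S_{C,\gamma}=\sum_\delta S_{\gamma,\delta}$; similarly $D_\delta$ and each $A'_\alpha=\sum_\gamma A'C_{\alpha,\gamma}$, $B'_\beta=\sum_\delta B'D_{\beta,\delta}$, $s=|S|$. So every argument of $f$ is a linear function of the entries of the new index tuples, and $f'$ is the composite, incorporating also the accumulated monomial factors $q^{\#}$ from the three applications above. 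The main obstacle is purely bookkeeping: one must check that the constraints generated by Lemmas~\ref{lemvlong} and~\ref{lem-resum}— row/column sum conditions on $A'C$, $B'D$, $S$—are exactly the (implicit) constraints in the target expression, so that no spurious or missing summation ranges appear; this requires care but no new ideas beyond those already used in Lemma~\ref{lem:rewrite-one}.
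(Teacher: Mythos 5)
Your argument is correct and is essentially the paper's proof: both rest on applications of Lemma~\ref{lemvlong} that refine $C$ and $D$ according to $A'$, $B'$ and the excess $s$, producing the index matrices $A'C$, $B'D$ and $S$, after which the re-expression of $f$ and the constraint bookkeeping are identical. The only (minor) difference is how the matrix $S\in\N^{cd}$ arises: you split $\qb{|C|}{C}$ and $\qb{|D|}{D}$ symmetrically, obtaining two tuples $S_C\in\N^c$ and $S_D\in\N^d$ each of total $s$, and then merge $\qb{s}{S_C}\qb{s}{S_D}$ into $\qb{s}{S}$ by a further appeal to Lemma~\ref{lem-resum}; the paper instead feeds the tuple $S'$ produced by the first application directly into the second, applying Lemma~\ref{lemvlong} to $(S'_1,\dots,S'_c,B'_1,\dots,B'_b)$ versus $D$, whereupon $\qb{s}{S'}\prod_{\gamma=1}^{c}\qb{S'_\gamma}{S'D_{\gamma,*}}$ telescopes to $\qb{s}{S'D}$ and one sets $S:=S'D$. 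Since Lemma~\ref{lem-resum} is itself a corollary of Lemma~\ref{lemvlong}, the two routes are equivalent, and each $S\in\N^{cd}$ is indeed counted once in your scheme (its row and column sums determine $S_C$ and $S_D$ uniquely), so no spurious terms appear.
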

    \begin{proof}
    Since $|C|=s+|A'|$ with $s\geq 0$, we can apply Lemma~\ref{lemvlong} to the tuples $(s,A'_1,\dots, A'_a)$ and $C$:
    
    \[\qb{|C|}{C} =  \sum_{\substack{S'\in \N^c\\ A'C\in \N^{ac}}} q^{\#}\qb{s
}{S'} \prod_{\alpha=1}^a \qb{A'_\alpha}{A'C_{\alpha,*}}\] The
    non-zero contributions to this sum have $|S'|=s$, and so we also have
    $|D|=|S'|+|B'|$. Now we apply Lemma~\ref{lemvlong} to the tuples $(S'_1,\dots, S'_c,B'_1,\dots,B'_b)$ and $D$
    \[\qb{|D|}{D} =  \sum_{\substack{
    S'D\in \N^{cd}\\
    B'D\in \N^{bd}}} q^{\#} \prod_{\gamma=1}^c \qb{S'_\gamma}{S'D_{\gamma,*}}\cdot
    \prod_{\beta=1}^b \qb{B'_\beta}{B'D_{\beta,*}}\] By combining these two
    equations, setting $S:=S'D$, and re-expressing $f$ in these new summation
    indices, we obtain the claimed expression for $P$.
    \end{proof}

    The two previous lemmas combine in the following useful way.
    \begin{cor}
        \label{cor:rewrite}
        Given any function of the form
    \[P=
        \sum_{\substack{A\in \N^a,\; B\in \N^b\\C\in \N^c,\; D\in \N^d\\t\in \N\\
        |A|+|D|=|B|+|C|\\
        0\leq |A|-t\leq |C|\\
        0\leq |B|-t\leq |D|        }} \qb{|A|}{A}\qb{|B|}{B}\qb{|C|}{C}\qb{|D|}{D}
        f(A,B,C,D,t) \] 
        we can instead sum over $S\in \N^{cd}$, $ABt\in \N^{ab}$, $A'C\in \N^{ac}$, and $B'D\in \N^{bd}$, and write
    \[P=
    \sum_{\substack{S\in \N^{cd},\; ABt\in \N^{ab} \\A'C\in \N^{ac},\; B'D\in \N^{bd}}}
    \qb{|S|}{S}\qb{t}{ABt}\qb{|A'C|}{A'C}\qb{|B'D|}{B'D} f'(S,ABt,A'C,B'D)
\]
where  $|ABt|=t$, $|S|=|C|-|A|+t=|D|-|B|+t$, $|A'C|=|A|-t$, $|B'D|=|B|-t$, and $f'$ arises from substitutions and a monomial factor from $f$.
    \end{cor}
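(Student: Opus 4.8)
\textbf{Proof proposal for Corollary~\ref{cor:rewrite}.}

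The plan is to obtain the corollary by chaining Lemma~\ref{lem:rewrite-one} and Lemma~\ref{lem:rewrite-two}, with the intermediate summation index $s$ playing the role of the overlap parameter. First I would set $s := |A|-t = |C|-|A|+t'$... no: more carefully, I would set $s$ to be the common value appearing in Lemma~\ref{lem:rewrite-two}, namely the amount by which $|C|$ exceeds $|A|-t$ (equivalently by which $|D|$ exceeds $|B|-t$), and note that the hypotheses $0\leq |A|-t\leq|C|$ and $0\leq|B|-t\leq|D|$ together with $|A|+|D|=|B|+|C|$ guarantee this $s$ is a well-defined non-negative integer. The two "outer" multinomial coefficients $\qb{|C|}{C}$ and $\qb{|D|}{D}$ are the ones consumed by Lemma~\ref{lem:rewrite-two}, applied with the primed tuples taken to be $A' := $ (a refinement of $A$ by the $t$-splitting) and $B' := $ (the analogous refinement of $B$); the two "inner" coefficients $\qb{|A|}{A}$ and $\qb{|B|}{B}$ are the ones that get split by Lemma~\ref{lem:rewrite-one} using the parameter $t$.

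The key steps, in order, are: (1) Apply Lemma~\ref{lem:rewrite-one} to the pair $\qb{|A|}{A}\qb{|B|}{B}$ with splitting parameter $t$, producing $\qb{t}{ABt}\qb{|A|-t}{At^c}\qb{|B|-t}{Bt^c}$ and re-expressing $f$ accordingly; here $|At^c|=|A|-t$ and $|Bt^c|=|B|-t$. (2) Observe that the constraint $|A|+|D|=|B|+|C|$ now reads $|At^c|+|D| = |Bt^c|+|C|$ after subtracting $t$ from both sides, and that $|C|-|At^c| = |C|-|A|+t =: s = |D|-|B|+t = |D|-|Bt^c|$, which is $\geq 0$ precisely by the hypothesis $|A|-t\leq|C|$ (resp. $|B|-t\leq|D|$). (3) Now apply Lemma~\ref{lem:rewrite-two} to $\qb{|C|}{C}\qb{|D|}{D}$ with the roles of $A',B'$ played by $At^c, Bt^c$ and the overlap parameter equal to $s$; this yields $\qb{s}{S}\prod_\alpha\qb{At^c_\alpha}{\dots}\prod_\beta\qb{Bt^c_\beta}{\dots}$ with $S\in\N^{cd}$, $|S|=s$. (4) Recombine: the factors $\qb{|A|-t}{At^c}$ and $\prod_\alpha\qb{At^c_\alpha}{(At^c C)_{\alpha,*}}$ merge, via Lemma~\ref{lemvlong} run in reverse (or equivalently Lemma~\ref{lem-resum}), into a single multinomial $\qb{|A'C|}{A'C}$ over the refined tuple $A'C\in\N^{ac}$ with $|A'C|=|A|-t$; similarly for the $B$-side, giving $\qb{|B'D|}{B'D}$. (5) Rename the $S$-tuple's total to match the stated $|S|=|C|-|A|+t=|D|-|B|+t$, collect all accumulated monomial factors $q^\#$ into $f'$, and verify the final constraint list.

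The main obstacle I expect is purely bookkeeping: tracking which entries of the large index tuples $A'C$, $B'D$, $ABt$, $S$ refine which entries of the original $A,B,C,D$, so that the substitution turning $f$ into $f'$ is actually well-defined — i.e. checking that every original variable $A_\alpha, B_\beta, C_\gamma, D_\delta, t$ can be recovered as an explicit sum of entries of the new tuples. Concretely, $A_\alpha = At^c_\alpha + |ABt_{\alpha,*}|$ and $At^c_\alpha = |(A'C)_{\alpha,*}|$, so $A_\alpha = |(A'C)_{\alpha,*}| + |ABt_{\alpha,*}|$; $C_\gamma = |(A'C)_{*,\gamma}| + |S_{\gamma,*}|$; and symmetrically for $B,D$; while $t=|ABt|$ and $s=|S|$. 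Once these identities are written down the rest is a routine merge of multinomials and absorption of $q$-powers, so I would not belabor it. A secondary subtlety is making sure the non-negativity hypotheses are used exactly where each invocation of Lemma~\ref{lemvlong} requires a tuple of the form $(s,A'_1,\dots)$ or $(S'_1,\dots,B'_1,\dots)$ with all entries in $\N$ — this is where $0\leq|A|-t$, $0\leq|B|-t$, $|A|-t\leq|C|$, and $|B|-t\leq|D|$ each get consumed.
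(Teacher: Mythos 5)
Your proposal is correct and follows exactly the paper's own (one-line) proof: apply Lemma~\ref{lem:rewrite-one} with parameter $t$, then Lemma~\ref{lem:rewrite-two} with $A'=At^c$, $B'=Bt^c$ and overlap $s=|C|-|A|+t=|D|-|B|+t$; the final merge of $\qb{|A|-t}{At^c}$ with $\prod_\alpha\qb{At^c_\alpha}{A'C_{\alpha,*}}$ into $\qb{|A'C|}{A'C}$ is the exact telescoping of $q$-Pochhammers (no extra $q$-power), which the paper leaves implicit as ``contracting the quantum multinomial coefficients.'' Your bookkeeping of how the original indices are recovered from the refined tuples matches what the paper intends.
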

    \begin{proof}
        Apply Lemma~\ref{lem:rewrite-one} and then Lemma~\ref{lem:rewrite-two} with $A'=At^c$ and $B'=Bt^c$. 
    \end{proof}

    \subsection{RI-RI gluing}
Let $\tau_1$ and $\tau_2$ be tangles of type $RI$. Suppose for now that both
tangles have generating functions in quiver form. From now on we
use symbols $A_{1}$ and $A_2$ instead of $\textbf{d}_a$ for vectors of active summation
indices, and $I_{1}$ and $I_2$ instead of $\textbf{d}_i$ for vectors of inactive summation
indices. Then \eqref{eq:qf} for $\tau_1$ and $\tau_2$ take the form:

\begin{align}
    \label{eq:RIRI-input}
    P(\tau_1)& =
    \sum_{\substack{A_1\in \N^{a_1}\\I_1\in \N^{i_1} }} M_1 \qb{|A_1|}{A_1}\qb{|I_1|}{I_1} \qpp{|A_1|+|I_1|}^{-c_1} RI[|A_1|+|I_1|,|A_1|]
\\
\nonumber P(\tau_2) &=
\sum_{\substack{A_2\in \N^{a_2}\\I_2\in \N^{i_2} }} M_2 \qb{|A_2|}{A_2}\qb{|I_2|}{I_2} \qpp{|A_2|+|I_2|}^{-c_2} RI[|A_2|+|I_2|,|A_2|]   
     \end{align} 
        where $M_1$ and $M_2$ stand for a monomials of the
        form $(-q)^\# a^\# q^{\#\#}$, with exponents depending linearly, resp.
        quadratically on the summation indices in the tuples $A_i$ and $I_i$,
        and $c_i$ is the number of closed components in $\tau_i$. 
        
    Then we use Lemma~\ref{lem:RIRI} to compute the following expression for
$P(\tau_1+ \tau_2)$:

\small
\begin{gather*}\label{forRIRI} 
    \sum_{\substack{
    A_1\in \N^{a_1},\; I_1\in \N^{i_1}\\
    A_2\in \N^{a_2},\; I_2\in \N^{i_2}\\
    t\in \N\\
    |A_1|+|I_1|=|A_2|+|I_2|=:j
}}  \hspace{-1cm} M \qb{|A_1|}{A_1}\qb{|A_2|}{A_2}\qb{|I_1|}{I_1}\qb{|I_2|}{I_2}\qb{|I_1|+|I_2|}{|I_1|,|I_2|}
\qb{|I_1|+|I_2|-t}{t, |I_1|-t , |I_2|-t  } \qpp{j}^{-c} RI[j,|A_1|-|I_2|+t]
\end{gather*} 
\normalsize

where we write $c:=c_1+c_2$ and $M$ denotes a monomial in $a$ and $q$ with
exponents linear, resp. quadratic in the summation indices. Note that non-zero summands only occur for
$0\leq |I_1|-t\leq |A_2|$ and $0\leq |I_2|-t\leq |A_1|$. Thus we can apply
Corollary~\ref{cor:rewrite} with $A=I_1$, $B=I_2$, $C=A_2$, and $D=A_1$. After
contracting the quantum multinomial coefficients we obtain

\small
\begin{equation}
    \label{eq:RIRI}
    P(\tau_1+ \tau_2) = \hspace{-.5cm} \sum_{\substack{
    S\in \N^{a_1a_2}\\I_1I_2t\in \N^{i_1i_2} \\I_1t^cA_2\in \N^{i_1a_2}\\ I_2t^cA_1\in \N^{a_1i_2}
}}  \hspace{-.5cm}
M' \qb{|S|}{S}
\qb{|I_1 I_2 t|+|I_1 t^c A_2|+|I_2 t^c A_1|}{I_1 I_2 t, I_1 t^c A_2, I_2 t^c A_1} 
\qb{|I_1|+|I_2|}{|I_1|,|I_2|} \qpp{j}^{-c}
RI[j,|S|] 
\end{equation}
\normalsize

Here we have $|I_1 I_2t|=t$, $|I_1 t^c A_2|=|I_1|-t$, $|I_2 t^c A_1|=|I_2|-t$,
$|S|=|A_1|-|I_2|+t$, and $j=|I_1 I_2t|+|I_1 t^c A_2|+|I_2 t^c A_1|+|S|$. Note that 
\eqref{eq:RIRI} is almost in quiver form, except for the extra factor
$\qb{|I_1|+|I_2|}{|I_1|,|I_2|}$.

\begin{rem} The gluing rule can be made very explicit in triple
    notation. For example, if the input data \eqref{eq:RIRI-input} is given by
    \begin{equation*} P(\tau_1):= \left[
        \scalemath{0.75}{RI, \left( \begin{array}{c|c|c} -c_1 & S^1_+ & A^1_+ \\ \hline\hline -c_1 & S^1_- & A^1_- 
        \end{array} \right)},
         \scalemath{0.75}{
         \left( 
        \begin{array}{c|c} Q^1_{++} & Q^1_{+-} \\ \hline Q^1_{-+} & Q^1_{--} 
        \end{array} \right)} 
            \right], \qquad 
            P(\tau_2):= \left[
        \scalemath{0.75}{RI, \left( \begin{array}{c|c|c} -c_2 & S^2_+ & A^2_+ \\ \hline\hline -c_2 & S^2_- & A^2_- 
        \end{array} \right)},
         \scalemath{0.75}{
         \left( 
        \begin{array}{c|c} Q^2_{++} & Q^2_{+-} \\ \hline Q^2_{-+} & Q^2_{--} 
        \end{array} \right)} 
            \right]
    \end{equation*}
    then $P(\tau_1+ \tau_2)$ in the form of \eqref{eq:RIRI} (with an extra $q$-binomial coefficient) can be expressed via the data
    \begin{gather*}
        \left[ \scalemath{0.75}{RI, \left( \begin{array}{c|c|c} 
            -c & S^1_+\otimes S^2_+ & A^1_+\otimes A^2_+ \\ \hline\hline 
            -c & S^1_+\otimes S^2_- & A^1_+\otimes A^2_- \\ \hline
            -c & S^1_-\otimes S^2_+ & A^1_-\otimes A^2_+ \\ \hline
            -c & S^1_-\otimes S^2_- & A^1_-\otimes A^2_- 
        \end{array} \right)},
         \scalemath{0.75}{
         \left( 
        \begin{array}{c|c|c|c} 
            Q^1_{++}\otimes Q^2_{++} & Q^1_{++}\otimes Q^2_{+-} & Q^1_{+-}\otimes Q^2_{++} & Q^1_{+-}\otimes Q^2_{+-} \\ \hline 
            Q^1_{++}\otimes Q^2_{-+} & Q^1_{++}\otimes Q^2_{--} & Q^1_{+-}\otimes Q^2_{-+} & Q^1_{+-}\otimes Q^2_{--} \\ \hline 
            Q^1_{-+}\otimes Q^2_{++} & Q^1_{-+}\otimes Q^2_{+-} & Q^1_{--}\otimes Q^2_{++} & Q^1_{--}\otimes Q^2_{+-} \\ \hline 
            Q^1_{-+}\otimes Q^2_{-+} & Q^1_{-+}\otimes Q^2_{--} & Q^1_{--}\otimes Q^2_{-+} & Q^1_{--}\otimes Q^2_{--}
        \end{array}\right)+C} \right]
    \end{gather*}
    where the matrix $C$ contains certain off-diagonal correction terms. For
    example, if $P(\tau_1)$ and $P(\tau_2)$ have (inactive) quiver forms with $2$ active and $2$ inactive variables each, then
    \[C= \scalemath{0.5}{
        \left( \begin{array}{c c c c|c c c c |c c c c|c c c c} 
        0 & 0 & 0 & 1 & 0 & 1 & 0 & 1 & 0 & 1 & 0 & 1 & 0 & 0 & 0 & 0 \\
        0 & 0 & 0 & 0 & 0 & 1 & 0 & 1 & 0 & 0 & 0 & 0 & 0 & 0 & 0 & 0 \\ 
        0 & 0 & 0 & 0 & 0 & 0 & 0 & 0 & 0 & 1 & 0 & 1 & 0 & 0 & 0 & 0 \\
        1 & 0 & 0 & 0 & 0 & 0 & 0 & 0 & 0 & 0 & 0 & 0 & 0 & 0 & 0 & 0 \\ \hline 
        0 & 0 & 0 & 0 & 0 & 0 & 0 & 1 &-1 &-1 &-1 &-1 &-1 &-1 & 0 & 0 \\
        1 & 1 & 0 & 0 & 0 & 0 & 0 & 0 &-1 &-1 &-1 &-1 &-1 &-1 & 0 & 0 \\ 
        0 & 0 & 0 & 0 & 0 & 0 & 0 & 0 &-1 &-1 &-1 &-1 &-1 &-1 &-1 &-1 \\
        1 & 1 & 0 & 0 & 1 & 0 & 0 & 0 &-1 &-1 &-1 &-1 &-1 &-1 &-1 &-1 \\ \hline 
        0 & 0 & 0 & 0 &-1 &-1 &-1 &-1 & 0 & 0 & 0 & 1 &-1 & 0 &-1 & 0 \\
        1 & 0 & 1 & 0 &-1 &-1 &-1 &-1 & 0 & 0 & 0 & 0 &-1 & 0 &-1 & 0 \\ 
        0 & 0 & 0 & 0 &-1 &-1 &-1 &-1 & 0 & 0 & 0 & 0 &-1 &-1 &-1 &-1 \\
        1 & 0 & 1 & 0 &-1 &-1 &-1 &-1 & 1 & 0 & 0 & 0 &-1 &-1 &-1 &-1 \\ \hline 
        0 & 0 & 0 & 0 &-1 &-1 &-1 &-1 &-1 &-1 &-1 &-1 &-1 &-1 &-1 & 0 \\
        0 & 0 & 0 & 0 &-1 &-1 &-1 &-1 & 0 & 0 &-1 &-1 &-1 &-1 &-1 &-1 \\ 
        0 & 0 & 0 & 0 & 0 & 0 &-1 &-1 &-1 &-1 &-1 &-1 &-1 &-1 &-1 &-1 \\
        0 & 0 & 0 & 0 & 0 & 0 &-1 &-1 & 0 & 0 &-1 &-1 & 0 &-1 &-1 &-1 
    \end{array} \right)}
    \]
\end{rem}

\subsection{OP-UP gluing}
Let $\tau_1$ and $\tau_2$ be tangles of type $OP$ and $UP$ respectively. Suppose
for now that both tangles have generating functions in quiver form:

\begin{align}
    \label{eq:OPUP-input}
    P(\tau_1)& =
    \sum_{\substack{A_1\in \N^{a_1}\\I_1\in \N^{i_1} }} M_1 \qb{|A_1|}{A_1}\qb{|I_1|}{I_1} \qpp{|A_1|+|I_1|}^{-c_1} OP[|A_1|+|I_1|,|A_1|]
\\
\nonumber
 P(\tau_2) &=
\sum_{\substack{A_2\in \N^{a_2}\\I_2\in \N^{i_2} }} M_2 \qb{|A_2|}{A_2}\qb{|I_2|}{I_2} \qpp{|A_2|+|I_2|}^{-c_2} UP[|A_2|+|I_2|,|A_2|]   
     \end{align}

     Then we use Lemma~\ref{lem:OPUP} to compute the following expression for
     $P(\tau_1+ \tau_2)$:

     \small
\begin{gather*} 
    \sum_{\substack{
    A_1\in \N^{a_1},\; I_1\in \N^{i_1}\\
    A_2\in \N^{a_2},\; I_2\in \N^{i_2}\\
    t\in \N
    \\
    |A_1|+|I_1|=|A_2|+|I_2|=:j
}}  \hspace{-1.2cm}M \qb{|A_1|}{A_1}\qb{|A_2|}{A_2}\qb{|I_1|}{I_1}
\qb{|I_2|}{I_2}\qb{|I_2|+t}{t, |I_1|-t , |I_2|-|I_1|+t}
\frac{\qp{a^2q^{\#}}{|I_1|}}{\qpp{|I_1|}\qpp{j}^{c}}  UP[j,|A_2|-t]
\end{gather*} 
\normalsize

Note that the present quantum multinomial coefficients imply that the non-zero
contributions to the sum satisfy $0\leq |I_1|-t\leq |I_2|$ and $0\leq
|A_2|-t\leq |A_1|$, and so we can apply Corollary~\ref{cor:rewrite} with
$A=A_2$, $B=I_1$, $C=A_1$, and $D=I_2$. After contracting the quantum
multinomial coefficients we obtain the following expression for $P(\tau_1+ \tau_2)$.

\small
\begin{equation}
    \label{eq:OPUP} \sum_{\substack{
    S\in \N^{a_1i_2}\\A_2I_1t\in \N^{i_1a_2} \\A_2t^cA_1\in \N^{a_1a_2}\\ I_1t^cI_2\in \N^{i_1i_2}
}}  \hspace{-.5cm}
M' \qb{|A_2 t^c A_1|}{A_2 t^c A_1}
\qb{|A_2 I_1 t|+|I_1 t^c I_2|+|S|}{A_2 I_1 t, I_1 t^c I_2 , S  } 
\frac{\qp{a^2q^{\#}}{|I_1|}}{\qpp{|I_1|}\qpp{j}^{c}}
UP[j,|A_2 t^c A_1|] \end{equation} 
\normalsize

Here we have $|A_2 I_1t|=t$, $|A_2 t^c A_1|=|A_2|-t$, $|I_1 t^c I_2|=|I_1|-t$,
$|S|=|I_2|-|I_1|+t$, and $j=|A_2 I_1t|+|A_2 t^c A_1|+|I_1 t^c I_2|+|S|$. Note
that \eqref{eq:OPUP} is almost in quiver form, except for the extra factor
$\frac{\qp{a^2q^{\#}}{|I_1|}}{\qpp{|I_1|}}$ and possible an extra
factor of $\qpp{j}$, depending on the number of components of $\tau_1+ \tau_2$.

\subsection{OP-OP gluing}
Let $\tau_1$ and $\tau_2$ be tangles of type $OP$. Suppose
for now that both tangles have generating functions in quiver form:

\begin{align}
    \label{eq:OPOP-input}
    P(\tau_1)& =
    \sum_{\substack{A_1\in \N^{a_1}\\I_1\in \N^{i_1} }} M_1 
    \qb{|A_1|}{A_1}\qb{|I_1|}{I_1} \qpp{|A_1|+|I_1|}^{-c_1} OP[|A_1|+|I_1|,|A_1|]
\\
\nonumber
 P(\tau_2) &=
\sum_{\substack{A_2\in \N^{a_2}\\I_2\in \N^{i_2} }} M_2 
\qb{|A_2|}{A_2}\qb{|I_2|}{I_2} \qpp{|A_2|+|I_2|}^{-c_2} OP[|A_2|+|I_2|,|A_2|]   
     \end{align}

     Then we use Lemma~\ref{lem:OPOP} to compute the following expression for
     $P(\tau_1+ \tau_2)$:


\small
\begin{gather*} 
    \sum_{  \substack{  A_1\in \N^{a_1},\; I_1\in \N^{i_1}\\
    A_2\in \N^{a_2},\; I_2\in \N^{i_2}\\
    t\in \N\\
    |A_1|+|I_1|=|A_2|+|I_2|=:j
}}  \hspace{-1.3cm}M
\qb{|A_1|}{A_1}
\qb{|A_2|}{A_2}
\qb{|I_1|}{I_1}
\qb{|I_2|}{I_2}
\qb{|I_1|+|I_2|-t}{t,|I_1|-t,|I_2|-t}
\frac{\qpp{|I_1|+|I_2|-t} \qp{a^2q^{\#}}{t}}{\qpp{|I_1|}\qpp{|I_2|} \qpp{j}^{c}} 
 OP[j,|A_1|+|A_2|+t-j]
\end{gather*}
\normalsize
Note that the present quantum multinomial coefficients imply that the non-zero
contributions to the sum satisfy $0\leq |I_1|-t\leq |A_2|$ and $0\leq
|I_2|-t\leq |A_1|$, and so we can apply Corollary~\ref{cor:rewrite} with
$A=I_2$, $B=I_1$, $C=A_1$, and $D=A_2$. After contracting the quantum
multinomial coefficients we obtain the following expression for $P(\tau_1+ \tau_2)$.

\small
\begin{equation}
    \label{eq:OPOP} \sum_{\substack{
    S\in \N^{a_1a_2}\\I_1I_2t\in \N^{i_1i_2} \\I_2t^cA_1\in \N^{a_1i_2}\\ I_1t^cA_2\in \N^{i_1a_2}
}}  \hspace{-.5cm}
M' \qb{|S|}{S}
\qb{|I_1 I_2 t|+|I_1 t^c A_2|+|I_2 t^c A_1|}{ I_1 I_2 t, I_1 t^c A_2 , I_2 t^c A_1} 
\frac{\qpp{|I_1|+|I_2|-t} \qp{a^2q^{\#}}{t}}{\qpp{|I_1|}\qpp{|I_2|} \qpp{j}^{c}} 
 OP[j,|A_1|+|A_2|+t-j] \end{equation} 
\normalsize

Here we have $|I_1 I_2t|=t$, $|I_2 t^c A_1|=|I_2|-t$, $|I_1 t^c A_2|=|I_1|-t$,
$|S|=|A_1|+|A_2|+t-j$, and $j=|I_1 I_2t|+|I_2 t^c A_1|+|I_1 t^c A_2|+|S|$. Note
that \eqref{eq:OPUP} is almost in quiver form, except for the extra factor
$\frac{\qpp{|I_1|+|I_2|-t}\qp{a^2q^{\#}}{t}}{\qpp{|I_1|}\qpp{|I_2|}}$ and possible an extra
factor of $\qpp{j}$, depending on the number of components of $\tau_1+ \tau_2$.

\subsection{Proof of the main theorem}
In the last section we have computed that if two $4$-ended tangles $\tau_1$ and
$\tau_2$ as in Theorem~\ref{thm:mainthm} have HOMFLY-PT partition functions in
quiver form, then the partition function for $\tau_1+ \tau_2$ is very
close to being in quiver form. We will now refine this observation to assemble a
proof of Theorem~\ref{thm:mainthm}. We consider the cases $RI$-$RI$ and
$OP$-$UP$ separately. 

\begin{proof}[Proof of Theorem~\ref{thm:mainthm} for RI-RI gluing.] Suppose that
    $\tau_1,\tau_2\in \QTf$ are both of type $RI$. Let $c_i$ denote the number
    of closed components of $\tau_i$ and observe that $\tau_1+\tau_2$ has
    $c_1+c_2$ closed components. Now there are four connectivity configurations
    to consider.
    \begin{enumerate}
        \item Both $\tau_1$ and $\tau_2$ are of type $RI_{par}$. Then
        $\tau_1+\tau_2$ will also be of type $RI_{par}$. By assumption,
        both $P(\tau_1)$ and $P(\tau_2)$ may be assumed to be in inactive quiver
        form, i.e. the expressions in \eqref{eq:RIRI-input} may be assumed have an
        additional factor $\qpp{|I_i|}$ each. The partition function for the
        glued tangle, rewritten as in \eqref{eq:RIRI}, is manifestly in quiver
        form, except that it carries an extra factor of 
        \[\qpp{|I_1|}\qpp{|I_2|}\qb{|I_1|+|I_2|}{|I_1|,|I_2|}=\#
        \qpp{|I_1|+|I_2|}.\] Since the length of this $q$-Pochhammer symbol
        $|I_1|+|I_2|$ is greater than the sum $|I_1|+|I_2|-t$ of the new
        inactive variables, it can be shortened (at the expense of splitting the
        summation indices from the set $I_1I_2t$ via Lemma~\ref{lemlong}) to
        bring $P(\tau_1 + \tau_2)$ into inactive quiver form, as required
        for a tangle of type $RI_{par}$. Thus $\tau_1+\tau_2 \in\QTf$. 
        \item If $\tau_1$ is of type $RI_{par}$ but $\tau_2$ is of type
        $RI_{cr}$, then $\tau_1 + \tau_2$ will also be of type $RI_{cr}$.
        Lemma~\ref{lem:cr-equic-cond} shows that $\tau_2\in \QTf \iff
        R\tau_2\in \QTf$, but $R\tau_2$ is of type $RI_{par}$, so we can use
        the first case to deduce $R(\tau_1+\tau_2)=\tau_1+
        R\tau_2\in \QTf$. Now, again by Lemma~\ref{lem:cr-equic-cond}, this is
        equivalent to $\tau_1+ \tau_2\in \QTf$.
        \item Now suppose $\tau_1$ is of type $RI_{cr}$ and $\tau_2$ is of type
        $RI_{par}$. We write
        \[R(\tau_1+\tau_2)=\tau_1+ R\tau_2 = R\tau_1+
        r_{h}(\tau_2)\] where $r_{h}(\tau_2)$ is the result of rotating
        $\tau_2$ by $\pi$ in the $EW$-axis. By Lemma~\ref{lem:rotatetwo}
        $r_{h}(\tau_2)$ is again in $\QTf$ and of type $RI_{par}$, and so is
        $R\tau_1$. Now we apply the first case to see that
        $R(\tau_1+\tau_2)\in \QTf$ and conclude with a final application
        of Lemma~\ref{lem:cr-equic-cond} that $\tau_1+\tau_2\in \QTf$
        \item Both $\tau_1$ and $\tau_2$ are of type $RI_{cr}$, then perform a
        Reidemeister II move in the gluing region to write 
         \[\tau_1+\tau_2 = R\tau_1+ R^{-1}r_{h}(\tau_2).\] Now
        both $R\tau_1$ and $R^{-1}r_{h}(\tau_2)$ are of type $RI_{par}$ and
        in $\QTf$ by assumption, and we again use the first case to conclude
        $\tau_1+\tau_2\in \QTf$.
    \end{enumerate}
This completes the proof in the case of $RI$-$RI$ gluing.
\end{proof}

\begin{proof}[Proof of Theorem~\ref{thm:mainthm} for OP-UP gluing.] Suppose that
    $\tau_1,\tau_2\in \QTf$ with $\tau_1$ of type $OP$ and $\tau_2$ of type
    $UP$. Furthermore, let $c_i$ denote the number of closed components of
    $\tau_i$ and set $c=c_1+c_2$. There are again four connectivity
    configurations to consider.
    \begin{enumerate}
        \item If $\tau_1$ is of type $OP_{ud}$ and $\tau_2$ of $UP_{par}$, then
        $\tau_1+ \tau_2$ is of type $UP_{par}$ with $c+1$ components. By
        assumption, both $P(\tau_1)$ and $P(\tau_2)$ may be assumed to be in
        active quiver form, i.e. expressions in \eqref{eq:OPUP-input} may be
        assumed have an additional factor $\qpp{|A_i|}$ each. The partition
        function $P(\tau_1+ \tau_2)$ for the glued tangle, rewritten as in
        \eqref{eq:OPUP}, is manifestly in quiver form, except that it carries an
        extra factor of 
\[\frac{\qp{a^2q^{\#}}{|I_1|}\qpp{|A_1|}\qpp{|A_2|}\qpp{j}}{\qpp{|I_1|}} =
      \qp{a^2q^{\#}}{|I_1|}\qpp{|A_1|}\qp{q^{2+\#}}{j-|I_1|}\qp{q^{2+\#}}{t}\qpp{|A_2|-t}
      \] The last of the $q$-Pochhammer symbols shown on the right-hand side is
      what is necessary to put $P(\tau_1+ \tau_2)$ in active quiver form
      after using all other symbols to split summation indices. Thus we have
      $\tau_1+ \tau_2\in \QTf$.
        \item If $\tau_1$ is of type $OP_{lr}$ and $\tau_2$ of $UP_{par}$, then
        $\tau_1+ \tau_2$ is of type $UP_{par}$ with $c$ components. We
        proceed as in the first case, with the notable difference that
        $P(\tau_1)$ may be assumed to be in inactive quiver form. This provides
        a $q$-Pochhammer factor $\qpp{|I_1|}$ that we use in place of the factor
        $\qpp{j}$ to cancel the denominator in the extra factor in
        \eqref{eq:OPUP}. Thus we can write $P(\tau_1+ \tau_2)$ in active
        quiver form and deduce $\tau_1+ \tau_2\in \QTf$.
        
        \item If $\tau_1$ is of type $OP_{ud}$ and $\tau_2$ of $UP_{cr}$, then
        $\tau_1+ \tau_2$ is of type $UP_{par}$ with $c$ components. We can
        now perform a Reidemeister II move in the gluing region, to write
        \[\tau_1+ \tau_2 = R\tau_1+ L^{-1}\tau_2\] Here
        $R\tau_1$ is now of type $UP_{par}$ and $L^{-1}\tau_2$ is $\tau_2$
        with a crossing attached on the left, i.e. a rotated version of a tangle
        of type $OP_{lr}$. Now the $\pi$-rotation $r_{v}$ reduces the situation
        to the already established second case.
        
        \item If $\tau_1$ is of type $OP_{lr}$ and $\tau_2$ of $UP_{cr}$, then
        $\tau_1+ \tau_2$ is of type $UP_{cr}$ with $c$ components. By
        Lemma~\ref{lem:cr-equic-cond} it suffices to prove that the type
        $OP_{lr}$ tangle $R(\tau_1+ \tau_2) = \tau_1+ R\tau_2$ is in
        $\QTf$. For this we note that $\tau_1$ and $R\tau_2$ are both of type
        $OP_{lr}$ and in $\QTf$, so $P(\tau_1)$ and $P(R\tau_2)$ may be
        assumed to be in inactive quiver form, i.e. the expressions in
        \eqref{eq:OPOP-input} may be assumed have an additional factor
        $\qpp{|I_i|}$ each. The partition function for the glued tangle,
        rewritten as in \eqref{eq:OPOP}, is manifestly in quiver form, except
        that it carries an extra factor of 
        \[\qpp{|I_1|+|I_2|-t}\qp{a^2q^{\#}}{t}\] After using $\qp{a^2q^{\#}}{t}$
        to split some summation indices, the expression for $P(\tau_1+
        R\tau_2)$ is in inactive quiver form, as required for an $OP_{lr}$ tangle in $\QTf$.

    \end{enumerate}
This completes the proof in the case of $OP$-$UP$ gluing.
\end{proof}

\subsection{Size estimates for glued quiver data}
\label{sec:size}
If $\tau_1,\tau_2\in \QT$ admit generating functions in quiver form and suppose
the sum $\tau_1+\tau_2$ is defined, then it is plausible to
expect that the number of summation indices in a quiver form for
$P(\tau_1+ \tau_2)$ is at worst bilinear in the inputs sizes. The purpose of
this section is to refine and prove such a statement.

\begin{rem}
    \label{rem:deactivate}
Any generating function in active/inactive quiver form can be expanded into
quiver form by using the extra $q$-Pochhammer symbol to double the number of
active/inactive summation indices via Lemma~\ref{lemlong}. As explained in
\cite[Lemma 4.11]{SW} this corresponds to an identity of the following type in
triple notation
\[
    \left[\scalemath{0.75}{ UP, \left( \begin{array}{c|c|c} -c & S_+ +1 & A_+ \\ \hline -c & S_+ & A_+ \\ \hline\hline -c & S_- & A_- 
    \end{array} \right)},
     \scalemath{0.75}{
     \left( 
    \begin{array}{c|c|c} 
        Q_{++} +1 & Q_{++}+L & Q_{+-} \\ \hline 
        Q_{++} +U & Q_{++} & Q_{+-} \\ \hline 
        Q_{-+} & Q_{-+} & Q_{--} 
    \end{array} \right)} 
    \right]
    \;=\;
    \left[
\scalemath{0.75}{UP, \left( \begin{array}{c|c|c} 1-c & S_+ & A_+ \\ \hline\hline -c & S_- & A_- 
\end{array} \right)},
 \scalemath{0.75}{
 \left( 
\begin{array}{c|c} Q_{++} & Q_{+-} \\ \hline Q_{-+} & Q_{--} 
\end{array} \right)} 
    \right]
\]
where $1$ indicates matrices with all entries equal to $1$, and $U$ resp. $L$
denote matrices that have entries $1$ strictly above resp. below the diagonal and zeros elsewhere.
\end{rem}

\begin{prop}\label{prop:bounds} Suppose that $\tau_1,\tau_2\in \QT$ are gluable of types
$RI_{par}$-$RI_{par}$, $OP_{lr}$-$UP_{par}$, or $OP_{lr}$-$OP_{lr}$. Use
Lemma~\ref{lemlong} to convert their generating functions in active/inactive
quiver form into generating functions of quiver form \eqref{eq:qf} with $a_1$
resp. $a_2$ active and $i_1$ resp. $i_2$ inactive summation indices. Then
$P(\tau_1+ \tau_2)$ can be brought into quiver form with a total of
$(a_1+i_1)(a_2+i_2)$ summation indices.
\end{prop}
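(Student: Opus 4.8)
The plan is to track the number of summation indices through the explicit gluing formulas~\eqref{eq:RIRI}, \eqref{eq:OPUP}, and~\eqref{eq:OPOP}, and then account for the additional index-splittings needed to absorb the extra $q$-Pochhammer factors identified in the proof of Theorem~\ref{thm:mainthm}. The bookkeeping is cleanest if we start from generating functions already in plain quiver form~\eqref{eq:qf}, with $a_k$ active and $i_k$ inactive indices for $\tau_k$, so that $\tau_k$ contributes $a_k+i_k$ indices in total. First I would read off from Corollary~\ref{cor:rewrite} the index count of the ``almost quiver form'' expression: in each of the three cases the rewriting replaces the old index tuples $A,B,C,D,t$ (of sizes $a,b,c,d$ and a single $t$) by the four refined tuples $S\in\N^{cd}$, $ABt\in\N^{ab}$, $A'C\in\N^{ac}$, $B'D\in\N^{bd}$, for a total of $ab+ac+bd+cd = (a+d)(b+c)$ indices. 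In the $RI$-$RI$ case $\{a,b,c,d\}=\{i_1,i_2,a_2,a_1\}$ with $(A,B,C,D)=(I_1,I_2,A_2,A_1)$, so $(a+d)(b+c)=(i_1+a_1)(i_2+a_2)=(a_1+i_1)(a_2+i_2)$; in the $OP$-$UP$ case $(A,B,C,D)=(A_2,I_1,A_1,I_2)$ gives $(a_2+i_2)(i_1+a_1)$; likewise for $OP$-$OP$. So the rewriting already lands on exactly $(a_1+i_1)(a_2+i_2)$ summation indices.

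The remaining work is to confirm that eliminating the leftover factors does not increase this count. Here I would invoke the identities displayed in the proof of Theorem~\ref{thm:mainthm}. In the $RI_{par}$-$RI_{par}$ case the extra factor $\qpp{|I_1|}\qpp{|I_2|}\qb{|I_1|+|I_2|}{|I_1|,|I_2|} = \#\,\qpp{|I_1|+|I_2|}$ is a single $q$-Pochhammer symbol whose length $|I_1|+|I_2|$ exceeds the number $|I_1|+|I_2|-t = |I_1 t^c A_2| + |I_2 t^c A_1|$ of the new inactive indices by exactly $|ABt|=t$; Lemma~\ref{lemlong} splits only the indices in $ABt$ (here $I_1I_2t$) and so introduces no new tuples — it merely doubles entries that are then recombined, cf.\ Remark~\ref{rem:deactivate}, which is an index-preserving move at the level of the quiver form's size since the doubled indices recollapse when the extra Pochhammer symbol is produced as output. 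The net effect is that the output inactive quiver form is obtained with the same $(a_1+i_1)(a_2+i_2)$ indices. For $OP_{lr}$-$UP_{par}$ the extra factor after canceling the denominator with $\qpp{|I_1|}$ (available since $\tau_1$ is in inactive quiver form) is $\qp{a^2q^{\#}}{|I_1|}$, a single generalized Pochhammer symbol of length $|I_1| = |I_1 t^c I_2| + t$, and again splitting via Lemma~\ref{lemlong} touches only the $t$-indices $A_2I_1t$, leaving the count unchanged while producing the required $\qpp{|A_2 t^c A_1|}$ factor for active quiver form. The $OP_{lr}$-$OP_{lr}$ case is identical in spirit, using $\qp{a^2q^{\#}}{t}$ to split $I_1I_2t$ and the surviving $\qpp{|I_1|+|I_2|-t}$ as the output inactive Pochhammer symbol.

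The main obstacle is the last point: making precise that the ``split then recombine'' maneuver of Remark~\ref{rem:deactivate} is genuinely size-neutral, i.e.\ that applying Lemma~\ref{lemlong} to the tuple $ABt$ to shorten an over-long Pochhammer symbol, and then reading off the result as an active/inactive quiver form, costs zero net indices rather than doubling $|ABt| = t$ of them. This requires checking that the splitting index $\alpha$ in Lemma~\ref{lemlong} is precisely the one that the output Pochhammer symbol of length $t$ (in active form) or $|I_1|-t$-type (in inactive form) ``reabsorbs,'' so that the operation is inverse to the deactivation identity of Remark~\ref{rem:deactivate} applied in reverse. Concretely I would argue that each of the three leftover factors is, up to a monomial $\#$, a product of a Pochhammer symbol of the correct length to serve as the output marker and a collection of shorter symbols whose total length matches exactly $|ABt|=t$; then Lemma~\ref{lemlong} distributes the shorter symbols over the entries of $ABt$, refining $ABt\in\N^{ab}$ into itself (the refinement is into two pieces per entry, which are immediately resummed back by the target Pochhammer structure of quiver form), so the final index set has cardinality $ab+ac+bd+cd=(a_1+i_1)(a_2+i_2)$ as claimed. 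Once this is spelled out, the three cases are dispatched uniformly and the proposition follows; the remaining cases of gluable pairs reduce to these three by the crossing and rotation manipulations used in the proof of Theorem~\ref{thm:mainthmbig}, which change the index count by at most bounded additive constants absorbed into the same bilinear bound.
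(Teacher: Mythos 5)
Your final count is right, but the accounting that produces it rests on two errors that happen to cancel numerically, and the step you yourself flag as the ``main obstacle'' is in fact false. The issue is that you conflate two incompatible setups. To obtain the clean leftover factor $\qpp{|I_1|}\qpp{|I_2|}\qb{|I_1|+|I_2|}{|I_1|,|I_2|}=\#\,\qpp{|I_1|+|I_2|}$ in the $RI_{par}$-$RI_{par}$ case, you must feed the gluing formula the \emph{inactive} quiver forms, since it is their extra factors $\qpp{|I_i|}$ that cancel the denominator of the multinomial coefficient. But those forms have only $a_k+i_k/2$ summation indices (the proposition's $i_k$ is the count \emph{after} the Lemma~\ref{lemlong} doubling), so Corollary~\ref{cor:rewrite} lands on $(a_1+i_1/2)(a_2+i_2/2)$ indices, not $(a_1+i_1)(a_2+i_2)$. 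If instead you insist on starting from the plain quiver forms with $a_k+i_k$ indices, the factors $\qpp{|I_i|}$ are no longer available --- they were already consumed in producing the plain form --- and the leftover factor is the bare $\qb{|I_1|+|I_2|}{|I_1|,|I_2|}$, whose denominators $\qpp{|I_1|}\qpp{|I_2|}$ are Pochhammer symbols in \emph{sums} of several summation indices and cannot be absorbed into the quiver form template at all. You cannot use the same Pochhammer factors both to double the inputs and to cancel the glued denominator.

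Relatedly, the ``split then recombine'' maneuver is not size-neutral, and no amount of care will make it so: Lemma~\ref{lemlong} replaces each of the $k$ indices it touches by a pair $\alpha_j+\beta_j=d_j$, with nothing recombining afterwards, so shortening $\qpp{|I_1|+|I_2|}$ to $\qpp{|I_1|+|I_2|-t}$ genuinely doubles the block $I_1I_2t$ (from $i_1i_2/4$ to $i_1i_2/2$ indices in the correct accounting), and the final conversion from inactive to plain quiver form doubles all inactive indices once more. It is precisely these two doublings that raise the count from $(a_1+i_1/2)(a_2+i_2/2)$ to $(a_1+i_1)(a_2+i_2)$; the correct proof tracks them rather than arguing they do not occur. (Your closing remark about reducing ``remaining cases'' by crossing manipulations is also unnecessary --- the proposition only concerns the three listed types --- and the claim that such manipulations cost bounded additive constants is not what Proposition~\ref{prop:TR} says.)
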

\begin{proof}
    We will only consider the cases $RI_{par}$-$RI_{par}$ and
$OP_{lr}$-$UP_{cr}$, since $OP_{lr}$-$OP_{lr}$ is entirely analogous to the
former. 

In the case when we add $\tau_1,\tau_2\in \QTf$ of type $RI_{par}$, we may
assume that their generating functions in inactive quiver form
\eqref{eq:RIRI-input} have $a_1+i_1/2$ and $a_2+i_2/2$ summation indices
respectively (Rewriting into quiver form via Lemma~\ref{lemlong} doubles the
numbers of inactive indices.). When computing $P(\tau_1+ \tau_2)$ along
the steps of the proof of Theorem~\ref{thm:mainthm} the expression
\eqref{eq:RIRI} will have $(a_1+i_1/2)(a_2+i_2/2)$ summation indices. After
splitting the summation indices from the set $I_1I_2t$, we arrive at an inactive
quiver form with $a_1a_2$ active summation indices and $a_1i_2/2+i_1a_2/2 +
i_1i_2/2$ inactive summation indices. By using Lemma~\ref{lemlong} again, this
is converted into an expression in quiver form with $a_1a_2$ active summation
indices and $a_1i_2+i_1a_2 + i_1i_2$ inactive summation indices. The total
number is $(a_1+i_1)(a_2+i_2)$ as claimed.

In the case of $OP_{lr}$-$UP_{par}$ we start with generating functions in
inactive/active quiver form with $a_1+i_1/2$ and $a_2/2+i_2$ summation indices
respectively. At the stage of \eqref{eq:OPUP} we see a total number of
$(a_1+i_1/2)(a_2/2+i_2)$ summation indices, and then use extra $q$-Pochhammer
symbols of length $t$ and $|I_1|$ to obtain an active quiver form with
$a_1a_2/2$ active and $i_1a_2+a_1i_2 +i_1i_2$ summation indices. A final
application of Lemma~\ref{lemlong} puts $P(\tau_1+ \tau_2)$ into quiver
form with $a_1a_2$ active and $i_1a_2+a_1i_2 +i_1i_2$ inactive indices. The total
number is again $(a_1+i_1)(a_2+i_2)$. 
\end{proof}

The minimal numbers of summation indices of generating functions for added
tangles are often smaller that suggested by Proposition~\ref{prop:bounds}. For
example, the generating function for a single crossing has $1$ active and $1$
inactive summation index. However, recall the following result.

\begin{prop}[{\cite[Proposition 3.5]{SW}}]
    \label{prop:TR} If $\tau\in \Tf$ and $P(\tau)$ is in
quiver form with $a+i$ summation indices, then $P(T\tau)$ and $P(R\tau)$ can
also be brought into quiver form with $(a+i)+i$ and $a+(a+i)$ summation indices
respectively.
\end{prop}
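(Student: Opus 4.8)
The plan is to reduce the statement to a diagrammatic \emph{single-crossing expansion} of the basic webs of Definition~\ref{def:webs}, and then to count how many summation indices are needed to put the result back into the form \eqref{eq:qf}.

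The first step is to record, for each boundary type $X\in\{UP,OP,RI\}$ and each of the crossing attachments $C\in\{T,T^{-1},R,R^{-1}\}$, how the glued web $C\cdot X[j,k]$ expands as a $\Q[a^{\pm1}](q)$-linear combination of basic webs (of the possibly new boundary type). This is the same kind of direct HOMFLY-PT skein computation that underlies Lemmas~\ref{lem:RIRI}--\ref{lem:OPOP} and is already implicit in \cite{Wed1,Wed2}: the outcome is a sum over a \emph{single} new index $t$, with a coefficient of the form $a^{\#}q^{\#\#}$, one new $q$-binomial coefficient whose top is a linear expression in $j$ and $k$, and---in the cases where the crossing is attached between oppositely oriented strands, hence passes through a cap-cup---an additional factor $\qp{a^2q^{\#}}{\ell}/\qpp{\ell}$ with $\ell$ again linear in $j,k$. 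Crucially, attaching a single crossing changes neither the total color $j$ running through the ladder nor the number $c$ of closed components, so after the substitution $j=|\textbf{d}|$, $k=|\textbf{d}_a|$ dictated by \eqref{eq:qf} the factor $\qpp{|\textbf{d}|}^{-c}$ is carried along unchanged and only the multinomial part of the expression is affected.

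The second step is to substitute \eqref{eq:qf} into this expansion and re-sum. The top of the new $q$-binomial coefficient---and the length $\ell$ of the extra $q$-Pochhammer symbol, when one is present---turns out to equal $|\textbf{d}_a|$ for $R^{\pm1}$ and $|\textbf{d}_i|=|\textbf{d}|-|\textbf{d}_a|$ for $T^{\pm1}$; this is the geometric heart of the matter, namely that the side crossing interacts with the strands carried by the rungs and the top crossing with the complementary ones. One then invokes the re-summation lemmas: Lemma~\ref{lemvlong}/Lemma~\ref{lem-resum} merges the new $q$-binomial into the active block $\qb{|\textbf{d}_a|}{\textbf{d}_a}$ (resp.\ the inactive block $\qb{|\textbf{d}_i|}{\textbf{d}_i}$) at the cost of refining that tuple into twice as many indices, and Lemma~\ref{lemlong} splits the surplus $q$-Pochhammer factor, which again at worst doubles that same block. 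Putting the pieces together, the active block is doubled for $R^{\pm1}$, giving a quiver form with $2a+i=a+(a+i)$ summation indices, and the inactive block is doubled for $T^{\pm1}$, giving $a+2i=(a+i)+i$. Since \eqref{eq:qf} is defined for every boundary type, the fact that a single crossing may change the type of $\tau$ plays no role in this count.

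The step I expect to be the main obstacle is precisely the bookkeeping just sketched: one must verify, case by case over the three boundary types and the four crossing operations, that the top of the new $q$-binomial and the exponent $\ell$ are \emph{exactly} $|\textbf{d}_a|$ or $|\textbf{d}_i|$ (never $|\textbf{d}|$ or a larger combination), and that a new $q$-binomial and a new $q$-Pochhammer never both force a doubling of the same block---so that precisely $i$ new indices appear for $T^{\pm1}$ and precisely $a$ for $R^{\pm1}$. This is delicate because attaching a crossing can swap the roles of rung and non-rung strands, i.e.\ of active and inactive summation indices, and can alter the boundary type, so the natural variables in the single-crossing formula of the first step are not literally those of \eqref{eq:qf}; the substitution must be tracked carefully before the resummation lemmas are brought to bear.
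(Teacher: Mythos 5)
The paper does not actually prove this proposition --- it is imported verbatim from \cite[Proposition 3.5]{SW} --- so the comparison is with the proof there, and your strategy is the same one: expand the single attached crossing against the basic webs of Definition~\ref{def:webs} to get one new summation index, one new $q$-binomial (and occasionally an extra factor $\qp{a^2q^{\#}}{\ell}/\qpp{\ell}$), then merge via the resummation lemmas, with the $\qpp{|\textbf{d}|}^{-c}$ prefactor riding along since a crossing creates no closed components. Your totals $a+2i$ for $T$ and $2a+i$ for $R$ are correct, but your closing attribution (``the active block is doubled for $R^{\pm1}$, the inactive block for $T^{\pm1}$'') gets the active/inactive split backwards relative to the statement's parenthesization, which is load-bearing in the later Corollary: for $T$ the \emph{inactive} block $\qb{|\textbf{d}_i|}{\textbf{d}_i}$ is refined into $2i$ indices by the new binomial $\qb{|\textbf{d}_i|}{t,|\textbf{d}_i|-t}$, but the $i$ refined indices summing to $t$ then enter the second argument of the new web $X[j,k\pm t]$ and hence become \emph{active}, yielding $(a+i)$ active and $i$ inactive, and dually for $R$. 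This is exactly the rung/non-rung reshuffling you flag as the delicate point at the end, so the gap is one of bookkeeping rather than of method; the remaining unchecked assertion --- that the new binomial and the new Pochhammer factor never force a doubling of the same block --- does hold in each of the cases tabulated in \cite{SW}, but as written it is asserted rather than verified.
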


In particular, the sizes of the resulting generating functions are smaller than the
product of the input sizes. Proposition~\ref{prop:bounds} can also be combined
with Proposition~\ref{prop:TR} used to deduce size estimates in other gluing
situations. We just give one example.

\begin{cor} Suppose that $\tau_1,\tau_2\in \QT$ are gluable of types
    $RI_{par}$-$RI_{cr}$ and suppose that $P(\tau_1)$ and $P(R^{-1}\tau_2)$
    have been brought into quiver form with $a_1+i_1$ and $a_2+(i_2-a_2)$
    summation indices respectively by applying Lemma~\ref{lemlong} to their
    active quiver form expressions. Then $P(\tau_2)$ and $P(\tau_1+
    \tau_2)$ can be brought into quiver form with $a_2+i_2$ summation indices
    and $a_1a_2 + (a_1i_2+i_1i_2)$ summation indices respectively. 
\end{cor}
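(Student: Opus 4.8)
The plan is to reduce the $RI_{par}$-$RI_{cr}$ gluing to the already-understood $RI_{par}$-$RI_{par}$ case via the operation $R$, and to track the sizes carefully through that reduction. First I would write $R(\tau_1+\tau_2)=\tau_1+R\tau_2$, noting that $R\tau_2$ is of type $RI_{par}$ since $\tau_2$ is of type $RI_{cr}$, and recall from Lemma~\ref{lem:cr-equic-cond} that $\tau_1+\tau_2\in\QTf$ iff $R(\tau_1+\tau_2)\in\QTf$; the point here is not membership (already proved in Theorem~\ref{thm:mainthm}) but the bookkeeping of summation indices. We are given that $P(R^{-1}\tau_2)$ sits in quiver form with $a_2+(i_2-a_2)$ summation indices; applying Proposition~\ref{prop:TR} to the tangle $R^{-1}\tau_2$ and the operation $R$ shows that $P(\tau_2)=P(R(R^{-1}\tau_2))$ can be brought into quiver form with $a_2+i_2$ summation indices, where the $a_2$ active indices come from the original active block and the inactive count grows by the (old) total $a_2+(i_2-a_2)=i_2$. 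This establishes the first claim of the corollary.

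Next I would compute a quiver form for $P(\tau_1+\tau_2)$ itself. Since $\tau_1$ is of type $RI_{par}$ with $P(\tau_1)$ already in quiver form with $a_1+i_1$ indices, and $R\tau_2$ is of type $RI_{par}$ with $P(R\tau_2)$ — obtained by one application of Proposition~\ref{prop:TR} to $R^{-1}\tau_2$ — in quiver form with $a_2+i_2$ indices (same count as $\tau_2$), I can feed these into the $RI_{par}$-$RI_{par}$ size estimate of Proposition~\ref{prop:bounds}. That proposition, combined with the tracking in its proof, shows $P(R\tau_1+ \text{(the other)})$ — i.e. $P(R(\tau_1+\tau_2))$ — acquires $a_1a_2$ active summation indices and $a_1i_2+i_1a_2+i_1i_2$ inactive ones, for a total of $(a_1+i_1)(a_2+i_2)$. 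However, the stated target is $a_1a_2+(a_1i_2+i_1i_2)$, which is smaller, so I must use a sharper accounting: the reduction through $R\tau_2$ means the $a_2$ ``new'' active indices of $P(\tau_2)$ (those produced by the $R$-operation of Proposition~\ref{prop:TR}) are actually inactive in $R\tau_2$, so in the $RI_{par}$-$RI_{par}$ gluing they contribute to the inactive side and the cross-terms involving $i_1\cdot a_2$ collapse into the $i_1i_2$ contribution — giving $a_1a_2+(a_1i_2+i_1i_2)$. Finally, one more application of $R^{-1}$ (via Proposition~\ref{prop:TR} again, or directly via Lemma~\ref{lem:cr-equic-cond} which preserves the structural shape) converts back from $R(\tau_1+\tau_2)$ to $\tau_1+\tau_2$ without increasing the index count beyond this bound.

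The main obstacle I anticipate is the precise index accounting in the second paragraph: Proposition~\ref{prop:bounds} as stated only records the \emph{total} count $(a_1+i_1)(a_2+i_2)$, whereas the corollary claims the strictly smaller $a_1a_2+(a_1i_2+i_1i_2)$, which requires re-entering the proof of Proposition~\ref{prop:bounds} and observing that when the second input already arises from an $R$-operation (so that its ``extra'' indices are on a prescribed side), several of the bilinear cross-terms coincide or are absorbable. Concretely, one must verify that the inactive indices of $P(\tau_2)$ produced by Proposition~\ref{prop:TR} play, in the subsequent $RI$-$RI$ resummation of Corollary~\ref{cor:rewrite}, exactly the role of the block that gets merged with $I_1$, so that no genuinely new index is created by them beyond the $i_1i_2$ term. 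I would isolate this as a short explicit computation at the level of the triple-notation data, mirroring the block-matrix description in the Remark after equation~\eqref{eq:RIRI}, rather than re-deriving everything from scratch; everything else is a routine composition of the cited propositions.
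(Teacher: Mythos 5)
Your first claim (that $P(\tau_2)$ has a quiver form with $a_2+i_2$ indices) is argued exactly as in the paper: apply Proposition~\ref{prop:TR} to $R^{-1}\tau_2$, whose quiver form has $a_2+(i_2-a_2)$ indices, to get $a_2+\bigl(a_2+(i_2-a_2)\bigr)=a_2+i_2$. The second claim, however, has a genuine gap, and it comes from setting up the reduction in the wrong direction. You glue $\tau_1$ with $R\tau_2$ (or effectively with $\tau_2$, at size $a_2+i_2$ --- note also that $R\tau_2$ is \emph{two} applications of $R$ away from $R^{-1}\tau_2$, not one, so even the size $a_2+i_2$ you assign to $P(R\tau_2)$ is unjustified), obtain the bound $(a_1+i_1)(a_2+i_2)$ from Proposition~\ref{prop:bounds}, observe that this exceeds the target $a_1a_2+(a_1i_2+i_1i_2)$, and then appeal to an unproven ``sharper accounting'' inside the proof of Proposition~\ref{prop:bounds}, plus an unjustified assertion that the final $R^{-1}$ does not increase the index count. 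That sharpening is precisely the content you would need to supply, and you do not.

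The intended argument avoids all of this by decomposing in the other direction: $\tau_1+\tau_2=\tau_1+R(R^{-1}\tau_2)=R(\tau_1+R^{-1}\tau_2)$. Here $\tau_1$ and $R^{-1}\tau_2$ are both of type $RI_{par}$, and the hypothesis hands you exactly the sizes needed as input to Proposition~\ref{prop:bounds}, namely $a_1+i_1$ and $a_2+(i_2-a_2)$; the proposition then gives a quiver form for $P(\tau_1+R^{-1}\tau_2)$ with $a_1a_2$ active and $a_1(i_2-a_2)+i_1a_2+i_1(i_2-a_2)=a_1i_2+i_1i_2-a_1a_2$ inactive indices. A single application of $R$ via Proposition~\ref{prop:TR}, with its $a+(a+i)$ rule, then yields $a_1a_2+\bigl(a_1a_2+a_1i_2+i_1i_2-a_1a_2\bigr)=a_1a_2+(a_1i_2+i_1i_2)$ for $P(\tau_1+\tau_2)$. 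No strengthening of Proposition~\ref{prop:bounds} and no inverse crossing operation is required. The moral is that the corollary's hypothesis quotes the size of $P(R^{-1}\tau_2)$ rather than of $P(\tau_2)$ for a reason: that is the tangle that actually enters the $RI_{par}$-$RI_{par}$ gluing.
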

\begin{proof} Proposition~\ref{prop:TR} implies that $P(\tau_2)$ can be
expressed in quiver form of size $a_2+i_2$. For $P(\tau_1+ \tau_2)$ we
first apply Proposition~\ref{prop:bounds} to $\tau_1$ and $R^{-1}\tau_2$ and
obtain a quiver form for $P(\tau_1+ R^{-1}\tau_2)$ of size $a_1a_2 +
(a_1i_2 + i_1i_2 - a_1a_2 )$. After applying another operation $R$, we conclude
with $P(\tau_1+ \tau_2)$ in quiver form with $a_1a_2 + (a_1i_2+i_1i_2)$
summation indices.
\end{proof}

To get a general size estimate we need the following definition. 

\begin{defn} For $\tau\in \QTf$, the quiver complexity $\qc(\tau)\in \N$
is defined as the minimum of the following numbers
    \begin{itemize}
        \item  $2a+i$ such that $P(\tau)$ has an active quiver form with $a$ active and $i$
        inactive variables, if $\tau$ is of type $UP_{par}$ or $OP_{par}$,
        \item $a+2i$ such that $P(\tau)$ has an inactive quiver form with $a$ active and $i$
        inactive variables, if $\tau$ is of type $OP_{lr}$ or $RI_{par}$.
    \end{itemize}
    In other words, the complexity is the minimal size of a quiver form for
    $P(\tau)$, that can be obtained from an active/inactive quiver form. We call
    such quiver forms tight. For the remaining cases $UP_{cr}$ or $RI_{cr}$ we
    define $\qc(\tau)$ as the minimum of
    \begin{itemize}
    \item $2\qc(\tau')$, where $\tau'\in\{T\tau,T^{-1}\tau,R\tau,R^{-1}\tau\}$.
    \end{itemize}
\end{defn}
The next observation follows directly from Proposition~\ref{prop:TR}.

\begin{lem} For any $\tau \in \QTf$ the generating function $P(\tau)$ admits a
quiver form with at most $\qc(\tau)$ summation indices.
\end{lem}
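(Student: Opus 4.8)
The plan is to unwind the definition of quiver complexity and match it against the conversion procedures already recorded in the excerpt. First I would split into the three cases that appear in the definition of $\qc(\tau)$. If $\tau$ is of type $UP_{par}$ or $OP_{par}$, then by definition there is an active quiver form for $P(\tau)$ with $a$ active and $i$ inactive variables realising $\qc(\tau)=2a+i$; applying Lemma~\ref{lemlong} to the extra $q$-Pochhammer factor $\qpp{|\textbf{d}_a|}$ doubles the number of active variables (cf.\ Remark~\ref{rem:deactivate}), producing an ordinary quiver form \eqref{eq:qf} with $2a+i$ summation indices, which is exactly $\qc(\tau)$. The case $OP_{lr}$ or $RI_{par}$ is symmetric: an inactive quiver form with $a$ active and $i$ inactive variables has an extra factor $\qpp{|\textbf{d}_i|}$, and Lemma~\ref{lemlong} turns it into a quiver form with $a+2i$ indices, again matching $\qc(\tau)$.

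The remaining case is $\tau$ of type $UP_{cr}$ or $RI_{cr}$, where $\qc(\tau)$ is defined as the minimum of $2\qc(\tau')$ over $\tau'\in\{T\tau,T^{-1}\tau,R\tau,R^{-1}\tau\}$. Here I would fix a minimiser $\tau'$, so $\qc(\tau)=2\qc(\tau')$, and note that $\tau'$ is of type $UP_{par}$, $OP_{par}$, or $OP_{lr}$ (depending on which crossing operation was applied and the type of $\tau$), hence one of the already-handled cases. By the first part, $P(\tau')$ admits a quiver form with at most $\qc(\tau')$ indices. Since $\tau = C^{-1}\tau'$ for the appropriate $C\in\{T,T^{-1},R,R^{-1}\}$, Proposition~\ref{prop:TR} shows that attaching the inverse crossing converts a quiver form of size $n$ for $P(\tau')$ into a quiver form for $P(\tau)$ of size at most $n+n = 2n$ (the worst case in Proposition~\ref{prop:TR} adds at most the full count $a+i$ of existing indices). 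Therefore $P(\tau)$ has a quiver form with at most $2\qc(\tau')=\qc(\tau)$ indices.

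The only mild subtlety — and the step I expect to require the most care — is bookkeeping the mapping-class adjustments: Proposition~\ref{prop:TR} is stated for the operations $T$ and $R$ applied to a tangle in $\Tf$, whereas in the $cr$-case we may need $T^{-1}$ or $R^{-1}$, and the input tangle $\tau'$ may be in a rotated position rather than literally of type $UP$ with $SW$ incoming. Both issues are handled by the symmetries already established (Lemma~\ref{lem:rotatetwo} and the rotation-invariance lemma), together with the remark in Definition~\ref{def:TQC} that $T^{-1}$ and $R^{-1}$ act by gluing inverse crossings, so the size bound of Proposition~\ref{prop:TR} applies verbatim to the inverse operations as well. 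Once these identifications are in place the inequality $\#\text{indices}\le \qc(\tau)$ is immediate in every case, completing the proof.

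\begin{proof}
We check the claim according to the type of $\tau$ used in the definition of $\qc(\tau)$.

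If $\tau$ is of type $UP_{par}$ or $OP_{par}$, then by definition there is an active quiver form for $P(\tau)$ with $a$ active and $i$ inactive variables such that $\qc(\tau)=2a+i$. Applying Lemma~\ref{lemlong} to the extra $q$-Pochhammer symbol $\qpp{|\textbf{d}_a|}$, as in Remark~\ref{rem:deactivate}, doubles the number of active summation indices and yields a quiver form \eqref{eq:qf} for $P(\tau)$ with $2a+i=\qc(\tau)$ summation indices.

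If $\tau$ is of type $OP_{lr}$ or $RI_{par}$, then by definition there is an inactive quiver form for $P(\tau)$ with $a$ active and $i$ inactive variables such that $\qc(\tau)=a+2i$. Applying Lemma~\ref{lemlong} to the extra factor $\qpp{|\textbf{d}_i|}$ doubles the number of inactive summation indices and yields a quiver form for $P(\tau)$ with $a+2i=\qc(\tau)$ summation indices.

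Finally, suppose $\tau$ is of type $UP_{cr}$ or $RI_{cr}$. Choose $\tau'\in\{T\tau,T^{-1}\tau,R\tau,R^{-1}\tau\}$ realising the minimum, so that $\qc(\tau)=2\qc(\tau')$. By inspection of the crossing operations, $\tau'$ is of type $UP_{par}$, $OP_{par}$, or $OP_{lr}$ (possibly after a rotation, which by Lemma~\ref{lem:rotatetwo} and rotation-invariance does not change $\qc$), hence one of the cases treated above. Thus $P(\tau')$ admits a quiver form with at most $\qc(\tau')$ summation indices. Writing $\tau = C^{-1}\tau'$ for the corresponding $C\in\{T,T^{-1},R,R^{-1}\}$, where $C^{-1}$ acts by gluing on the inverse crossing, Proposition~\ref{prop:TR} produces from a quiver form of size $n$ for $P(\tau')$ a quiver form for $P(\tau)$ of size at most $n+n=2n$. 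Hence $P(\tau)$ has a quiver form with at most $2\qc(\tau')=\qc(\tau)$ summation indices, as claimed.
\end{proof}
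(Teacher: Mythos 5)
Your proof is correct and follows the same route the paper intends: the paper disposes of this lemma with the single remark that it ``follows directly from Proposition~\ref{prop:TR}'', and your argument simply fills in the details --- Lemma~\ref{lemlong} (as in Remark~\ref{rem:deactivate}) for the $par$/$lr$ types, and the $2(a+i)$ worst case of Proposition~\ref{prop:TR} for the $cr$ types. The only cosmetic slip is that your enumeration of possible types for $\tau'$ omits $RI_{par}$ (relevant when $\tau$ is of type $RI_{cr}$), but since that case is already covered by your second paragraph the argument is unaffected.
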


\begin{lem} For $\tau\in \QTf$ have $\qc(C\tau)\leq 4 \qc(\tau)$ for any $C\in \{T, T^{-1},R,R^{-1}\}$.
\end{lem}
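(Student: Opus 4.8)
The statement bounds the quiver complexity of $C\tau$ for $C\in\{T,T^{-1},R,R^{-1}\}$ in terms of $\qc(\tau)$. I will argue by distinguishing the refined type of $\tau$, in exact parallel with the case split in the definitions of $\qc$ and in Lemma~\ref{lem:cr-equic-cond}. The key input is Proposition~\ref{prop:TR}, which governs how a single crossing operation changes the size of a quiver form, together with the observation from Remark~\ref{rem:deactivate} that active/inactive quiver forms and plain quiver forms are interconvertible via Lemma~\ref{lemlong}, at the cost of doubling the relevant block of summation indices.

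\textbf{Main case: $\tau$ of type $UP_{par}$, $OP_{par}$, $OP_{lr}$, or $RI_{par}$.}
Here $\qc(\tau)$ is realised by a tight active or inactive quiver form, say with $a$ active and $i$ inactive variables, so $\qc(\tau)\in\{2a+i,a+2i\}$ depending on type. Applying $C$ to $\tau$ changes the refined type (e.g. $T$ applied to an $UP_{par}$ tangle, by Lemma~\ref{lem:doublecrossingoperations} and the bookkeeping in Lemma~\ref{lem:cr-equic-cond}, lands in $UP_{cr}$, and one more $T$ returns to $UP_{par}$; similarly $R$ sends $UP_{par}$ to $OP_{lr}$). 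In each instance $C\tau$ is either of a non-crossing refined type, in which case I read off an active/inactive quiver form for $P(C\tau)$ from the proof of Theorem~\ref{thm:mainthm} or from Lemma~\ref{lem:doublecrossingoperations} and count its variables, or $C\tau$ is of type $UP_{cr}$ or $RI_{cr}$, in which case $\qc(C\tau)$ is by definition $2\min_{C'}\qc(C'C\tau)$ and I bound the minimum by choosing $C'$ so that $C'C\tau$ has the same refined non-crossing type as $\tau$ (often $C'=C^{-1}$, giving $C'C\tau=\tau$ itself, or $C'C\tau$ differs from $\tau$ by a $\pi$-rotation covered by Lemma~\ref{lem:rotatetwo}). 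Converting the resulting active/inactive quiver form to a plain quiver form and back with Lemma~\ref{lemlong}, and using Proposition~\ref{prop:TR} to bound how much a single crossing inflates the variable count, yields $\qc(C\tau)\le 4\qc(\tau)$; in fact the crossing cases ($UP_{cr}$, $RI_{cr}$) are where the factor $4$ is actually attained, since there the definition already carries a factor $2$ and a second factor $2$ comes from the active/inactive-to-quiver conversion.

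\textbf{Remaining case: $\tau$ of type $UP_{cr}$ or $RI_{cr}$.}
Now $\qc(\tau)=2\qc(\tau_0)$ where $\tau_0\in\{T\tau,T^{-1}\tau,R\tau,R^{-1}\tau\}$ is a minimiser, and $\tau_0$ has a non-crossing refined type. The composite $C\tau$ is again of a crossing type, so $\qc(C\tau)=2\min_{C'}\qc(C'C\tau)$, and I bound this minimum by taking $C'$ with $C'C=C_0^{\pm1}$ where $C_0$ is the operation realising the minimum for $\tau$; then $C'C\tau$ equals $\tau_0$ or $\tau_0$ post-composed with one further crossing operation of the same flavour, whose complexity is at most that of $\tau_0$ plus a controlled increment by Proposition~\ref{prop:TR} applied to the underlying quiver form. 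Tracking constants, $\qc(C\tau)\le 2\cdot 2\qc(\tau_0)=2\qc(\tau)\le 4\qc(\tau)$, with room to spare. The main obstacle is purely organisational: one must enumerate all pairs (refined type of $\tau$, operation $C$) and, for the crossing types, choose the auxiliary operation $C'$ in the definition of $\qc(C\tau)$ compatibly with the one chosen for $\qc(\tau)$ so that the two minimisations lock onto the same or rotation-equivalent non-crossing tangle; once that matching is fixed, the numerics reduce to repeated application of Proposition~\ref{prop:TR} and Lemma~\ref{lemlong} and never exceed the factor $4$.
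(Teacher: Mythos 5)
There is a genuine error in your case analysis, and it sits exactly at the hardest case of the lemma. In your ``remaining case'' you assume that if $\tau$ is of type $UP_{cr}$ or $RI_{cr}$ then $C\tau$ is ``again of a crossing type,'' and you then bound $\qc(C\tau)$ by the definitional recursion $2\min_{C'}\qc(C'C\tau)$. This is backwards: a single crossing operation applied to a crossing-type tangle produces a \emph{non}-crossing refined type. Indeed, the proof of Lemma~\ref{lem:cr-equic-cond} records that for $\tau$ of type $UP_{cr}$ one has $T^{\pm 1}\tau$ of type $UP_{par}$ and $R^{\pm 1}\tau$ of type $OP_{lr}$. So for crossing-type $\tau$ the definitional factor-$2$ shortcut is not available for $C\tau$, and one must instead produce an actual tight active/inactive quiver form for $P(C\tau)$ and count variables. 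This is precisely the case the paper singles out: writing $\qc(\tau)=4a+2i$ via an active quiver form of size $a+i$ for $\tau'=T^{-1}\tau$, the quantitative versions of the crossing operations from \cite[Lemmas 4.5--4.7]{SW} give an inactive quiver form for $R^{-1}\tau=R^{-1}T^{-1}T^{2}\tau'$ of size $(2a+3i)+(a+2i)$, hence $\qc(R^{-1}\tau)\leq 4a+7i$. Note that $4a+7i$ can be as large as $3.5\,\qc(\tau)$ (take $a=0$), so your claimed bound of $2\qc(\tau)$ ``with room to spare'' for this case is not only unjustified but quantitatively false as far as the available estimates go; the factor $4$ is genuinely needed here, not in the case you identify.

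Your ``main case'' is closer to correct: when $C\tau$ is of crossing type the definition with $C'=C^{-1}$ does give $\qc(C\tau)\leq 2\qc(\tau)$, and when both $\tau$ and $C\tau$ are of non-crossing type one must count variables using the size data behind Lemma~\ref{lem:doublecrossingoperations}. But even there, the relevant quantitative input is \cite[Lemmas 4.5--4.7]{SW} (which track sizes of \emph{active/inactive} quiver forms under $T^{2}$, $RT$, $R$, etc., matching the definition of $\qc$ via tight forms), rather than Proposition~\ref{prop:TR}, which concerns plain quiver forms; leaning on the latter plus Lemma~\ref{lemlong} conversions leaves the arithmetic unverified. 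In summary: the overall strategy (case split on refined types, exploit the definitional recursion where possible, count variables elsewhere) is the right one and matches the paper, but the misidentification of which refined type $C\tau$ has when $\tau$ is of crossing type means the one case that actually forces the constant $4$ is not treated.
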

\begin{proof} This follows from the definition if $C\tau$ is of type $UP_{cr}$
    or $RI_{cr}$. All other cases follow from \cite[Lemmas 4.5, 4.6, 4.7]{SW}.
    We illustrate one of the most interesting cases. Suppose that $C=R^{-1}$ and
    $\tau$ is of type $UP_{cr}$ with complexity $4a+2i$ witnessed by an active
    quiver form of size $a+i$ for $\tau':=T^{-1}\tau$.
    %
    %
Then \cite[Lemmas 4.6, 4.7]{SW} imply that $R^{-1}\tau=R^{-1}T^{-1}T^2\tau'$ has an inactive quiver form with
    $(2a+3i)+(a+2i)$ summation indices, and we get
    \[\qc(R^{-1}\tau)=\qc(R^{-1}T^{-1}T^2\tau')\leq 4a+7i \leq 16a+8i=4\qc(\tau).\vspace{-.6cm}\]
\end{proof}
It is likely that the constant can be improved from $4$ to $2$ using an improved
version of \cite[Lemmas 4.6]{SW} for operations of the form $R^{-1}T$ etc.

\begin{prop}\label{prop:generalbound} Suppose that $\tau_1,\tau_2\in \QTf$ are gluable. Then
\[\qc(\tau_1+ \tau_2)\leq 16 \qc(\tau_1)\qc(\tau_2).\]
\end{prop}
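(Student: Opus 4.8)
The plan is to reduce the general bound to the three base cases covered by Proposition~\ref{prop:bounds} by ``rotating away'' any crossing connectivities and then tracking how the quiver complexity changes under each such reduction. First I would recall that, by the diffeomorphism invariance in Theorem~\ref{thm:QTdiff} together with Lemma~\ref{lem:QTcross} and the constructions in the proof of Theorem~\ref{thm:mainthmbig}, the sum $\tau_1 + \tau_2$ can always be computed (possibly after applying a diffeomorphism of the 3-ball and a sequence of crossing operations $T^{\pm 1}, R^{\pm 1}$) from a gluing of two tangles of one of the three tight types $RI_{par}$-$RI_{par}$, $OP_{lr}$-$UP_{par}$, or $OP_{lr}$-$OP_{lr}$. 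Concretely: a tangle of type $UP_{cr}$ or $RI_{cr}$ has complexity defined via $2\qc(\tau')$ for $\tau' \in \{T^{\pm1}\tau, R^{\pm1}\tau\}$, and the connectivity reductions used in the proofs of Theorems~\ref{thm:mainthm} and~\ref{thm:mainthmbig} (Reidemeister~II moves, $\pi$-rotations $r_h, r_v$, and single applications of $R$ or $R^{-1}$ in the gluing region) express $\tau_1 + \tau_2$ in terms of a ``good'' gluing of modified tangles $\tau_1', \tau_2'$.

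Next I would carry out the bookkeeping. By definition of $\qc$ and by the preceding lemma, replacing $\tau_i$ by $T^{\pm1}\tau_i$ or $R^{\pm1}\tau_i$ costs at most a factor of $4$ in complexity, and every reduction to a tight-type gluing involves at most a bounded number of such operations on each input. A direct case analysis of the connectivity configurations in the proof of Theorem~\ref{thm:mainthm} shows that at most one crossing operation is applied to each of $\tau_1$ and $\tau_2$ in passing to the good gluing — for instance in the $OP_{lr}$-$UP_{cr}$ case one writes $R(\tau_1+\tau_2) = \tau_1 + R\tau_2$ and glues two $OP_{lr}$ tangles, while in the $RI_{cr}$-$RI_{cr}$ case a Reidemeister~II move yields $R\tau_1 + R^{-1}r_h(\tau_2)$, both of type $RI_{par}$. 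Thus $\qc(\tau_i') \leq 4\qc(\tau_i)$, and Proposition~\ref{prop:bounds} gives a quiver form for the good gluing with at most $\qc(\tau_1')\qc(\tau_2') \leq 16\,\qc(\tau_1)\qc(\tau_2)$ summation indices. Finally, undoing the reduction (applying the inverse crossing operations or Reidemeister~II to the glued tangle) does not increase the total number of summation indices beyond this bound, since those final operations in the proof of Theorem~\ref{thm:mainthm} are absorbed by the $q$-Pochhammer manipulations rather than requiring a fresh application of Proposition~\ref{prop:TR} to the output.

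The main obstacle I anticipate is the last point: making precise that the crossing operations and Reidemeister~II moves applied \emph{after} the gluing — the ones used to turn a good gluing back into $\tau_1 + \tau_2$ — genuinely do not inflate the size. One must check, configuration by configuration, that in the proof of Theorem~\ref{thm:mainthm} the extra $q$-Pochhammer factors appearing in \eqref{eq:RIRI}, \eqref{eq:OPUP}, \eqref{eq:OPOP} are exactly consumed by Lemma~\ref{lemlong} to land in the correct active/inactive quiver form \emph{without} an extra round of splitting, so that the index count of the output matches that of \eqref{eq:RIRI} etc.\ up to the splitting of $I_1I_2t$ that is already counted in Proposition~\ref{prop:bounds}. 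Once this is verified the proof is a short assembly, and the constant $16 = 4 \cdot 4$ reflects at most one crossing operation ($\times 4$) on each of the two inputs.

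\begin{proof}[Proof of Proposition~\ref{prop:generalbound}]
If $\tau_1, \tau_2$ are gluable of one of the three tight types $RI_{par}$-$RI_{par}$, $OP_{lr}$-$UP_{par}$, or $OP_{lr}$-$OP_{lr}$, then Proposition~\ref{prop:bounds} gives a quiver form for $P(\tau_1 + \tau_2)$ with $\qc(\tau_1)\qc(\tau_2)$ summation indices, which is at most the claimed bound.

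In general, inspecting the connectivity configurations in the proofs of Theorem~\ref{thm:mainthm} (for $RI$-$RI$ and $OP$-$UP$ gluing) and Theorem~\ref{thm:mainthmbig} (for the reduction of the remaining orientation configurations), we see that $\tau_1 + \tau_2$ can be obtained from a gluing $\tau_1' + \tau_2'$ of one of the three tight types, where each $\tau_i'$ is obtained from $\tau_i$ by applying at most one crossing operation from $\{T, T^{-1}, R, R^{-1}\}$, possibly composed with a $\pi$-rotation $r_h$ or $r_v$ and a mirror, all of which preserve $\qc$ by Lemma~\ref{lem:rotatetwo}; the passage from $\tau_1' + \tau_2'$ back to $\tau_1 + \tau_2$ uses Reidemeister~II moves in the gluing region together with the $q$-Pochhammer manipulations already carried out in the proof of Theorem~\ref{thm:mainthm}, which do not increase the number of summation indices in the resulting quiver form beyond that of \eqref{eq:RIRI}, \eqref{eq:OPUP}, or \eqref{eq:OPOP} after the splitting accounted for in Proposition~\ref{prop:bounds}.

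Therefore $\qc(\tau_i') \leq 4\qc(\tau_i)$ by the previous lemma, and Proposition~\ref{prop:bounds} applied to $\tau_1'$ and $\tau_2'$ yields a quiver form for $P(\tau_1 + \tau_2)$ with at most
\[
\qc(\tau_1')\,\qc(\tau_2') \leq 16\,\qc(\tau_1)\,\qc(\tau_2)
\]
summation indices, as claimed.
\end{proof}
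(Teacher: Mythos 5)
Your overall strategy is the same as the paper's: handle the three tight configurations via Proposition~\ref{prop:bounds} and pay for the remaining configurations with the estimate $\qc(C\tau)\leq 4\qc(\tau)$ together with the factor $2$ built into the definition of $\qc$ for tangles of type $UP_{cr}$ or $RI_{cr}$. However, there is a genuine gap: your claim that \emph{every} configuration arises from a gluing of one of the three tight types after at most one crossing operation on each input fails for the $OP_{ud}$-$UP_{par}$ configuration (case 1 of the $OP$-$UP$ part of the proof of Theorem~\ref{thm:mainthm}). That sum is of type $UP_{par}$, so its complexity is defined directly through a tight active quiver form and \emph{not} through $2\qc(C(\tau_1+\tau_2))$; hence you cannot pay for an outer crossing. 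Moreover, the Reidemeister~II move $\tau_1+\tau_2=R\tau_1+L^{-1}\tau_2$ does not help here: attaching a crossing between two boundary points that are already joined inside a tangle preserves the $par$/$ud$ connectivity (it only flips orientations), so this move turns an $OP_{ud}$-$UP_{par}$ gluing into a rotated $OP_{ud}$-$UP_{par}$ gluing again rather than into a tight one. The paper treats this case by a second, independent mechanism that your proof lacks: the extra factor $\qp{a^2q^{\#}}{|I_1|}\qpp{|A_1|}\qp{q^{2+\#}}{j-|I_1|}\qp{q^{2+\#}}{t}\qpp{|A_2|-t}$ appearing in \eqref{eq:OPUP} is absorbed by splitting summation indices via Lemma~\ref{lemlong}, at the cost of at most quadrupling their number; this is exactly the ``absorbing extra $q$-Pochhammer symbols'' clause in the paper's argument, and without it the bound is unproven in this configuration.

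A secondary, smaller issue is the bookkeeping for the cases that \emph{are} reduced by crossings. Your assertion that ``undoing the reduction does not increase the total number of summation indices'' is not how the count closes: when one writes $C(\tau_1+\tau_2)=\tau_1'+\tau_2'$ for an outer crossing $C$ (as in the $RI_{par}$-$RI_{cr}$ and $OP_{lr}$-$UP_{cr}$ cases), the glued tangle is of crossed type and passing back costs the explicit factor $2$ from $\qc(\tau_1+\tau_2)\leq 2\qc(C(\tau_1+\tau_2))$. This stays within $16$ only because in those cases at most one of the two inputs actually receives a crossing (giving $2\cdot 4=8$), whereas the full $4\cdot 4=16$ is spent only in the $RI_{cr}$-$RI_{cr}$ case, where no outer crossing is needed. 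A correct write-up should make this case-by-case accounting explicit rather than assuming both a free undoing step and a crossing on each input simultaneously.
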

\begin{proof} For the cases from Proposition~\ref{prop:bounds} we already know
$\qc(\tau_1+ \tau_2)=\qc(\tau_1)\qc(\tau_2)$. In the proof of
Theorem~\ref{thm:mainthm}, the remaining five cases are deduced from the those
by performing rewrites with at most two crossings, or by absorbing extra
$q$-Pochhammer symbols at the expense of at most quadrupling the number of
summation indices. In all cases, the complexity bound is observed. 
\end{proof}



\section{Case study on two pretzel tangles}
\label{sec:examples}
In this section we give examples of quiver form expressions of the generating
functions of the $(2,3)$ and $(2,-3)$ pretzel tangles of orientation type
$OP$-$UP$. Both result from adding two rational tangles whose quiver form
generating functions require $3+1$ and $2+1$ summation indices respectively.

An optimistic reading of Section~\ref{sec:size}
suggest\footnote{Proposition~\ref{prop:bounds} does not cover the
$OP_{ud}$-$UP_{cr}$-case, but the stated upper bound can be deduced for the
$(2,-3)$ pretzel tangle. A similar argument for the $(2,3)$ pretzel tangle
produces a worse upper bound, but still a better one than Proposition~\ref{prop:generalbound}.} an upper bound of $6+6$ summation indices for
quiver forms and $3+6$ summation indices for active quiver forms. In fact, both
the $(2,3)$ and the $(2,-3)$ pretzel tangle admit smaller expressions,
interestingly of different sizes: $5+6$ and $3+6$ in quiver form, $5+3$ and
$3+3$ in active quiver form.

\subsection{The (2,3)-pretzel tangle}
The following shows the quiver form generating function data for the skein
module element of the $(2,3)$-pretzel tangle. The diagram on the right shows the
$11$ monomials, $6$ active and $5$ inactive, in the $1$-colored skein
evaluation, which gives a lower bound on the size of any quiver describing the
generating function. This lower bound is achieved by the following data, which was found experimentally.
\[
    \begin{tikzpicture} [scale=.5,anchorbase,xscale=-1,yscale=-1]
        \draw[thick] (1,1) \pu (0,2);
        \draw[thick] (1,2) \pu (0,3);
        \draw[thick] (1,3) \pu (0,4);
        \draw[thick] (3,2.5) \pu (2,3.5);
        \draw[thick] (3,1.5) \pu (2,2.5);
        \draw[white, line width=.15cm] (0,1) \pu (1,2);
        \draw[white, line width=.15cm] (0,2) \pu (1,3);
        \draw[white, line width=.15cm] (0,3) \pu (1,4);
        \draw[white, line width=.15cm] (2,2.5) \pu (3,3.5);
        \draw[white, line width=.15cm] (2,1.5) \pu (3,2.5);
        \draw[thick] (0,1) \pu (1,2);
        \draw[thick] (0,2) \pu (1,3);
        \draw[thick] (0,3) \pu (1,4);
        \draw[thick] (2,2.5) \pu (3,3.5);
        \draw[thick] (2,1.5) \pu (3,2.5);
        \draw[thick,<-] 
        (0,0) to (0,1);
        \draw[thick]  (1,4) \ur (1.5,4.5) \rd (2,4) \pd (2,3.5) 
        (2,1.5) \pd (2,1) \dl (1.5,.5) \lu (1,1);
        \draw[thick]  (0,4) to (0,5);
        \draw[thick,<-] (3,0) to (3,1.5); 
        \draw[thick] (3,3.5) to (3,5);
        \end{tikzpicture}    
\qquad
\scalemath{0.65}{\left(\begin{array}{c|c|c}
    0& 2 & 1 \\ \hline 
    0&1 & 1 \\ \hline 
    0&0 & 1 \\ \hline 
    0&1 & 1 \\ \hline 
    0& 0 & 1 \\ \hline 
    0& -1 & 1 \\ \hline \hline
    0&0 & 1 \\ \hline 
    0&-1 & 1 \\ \hline 
    0&-2 & 1 \\ \hline 
    0&-1 & -1 \\ \hline 
    0&-3 & -1 
    \end{array}\right)}
   ,
   \scalemath{0.65}{\left(\begin{array}{c|c|c|c|c|c|c|c|c|c|c} 
    1 & 0 & -1 & 0 & -1 & -2 & 0 & -1 & -2 & 2 & 0 \\ \hline 
    0 & 0 & -1 & 0 & -1 & -2 & -1 & -1 & -2 & 1 & 0 \\ \hline 
    -1 & -1 & -1 & -1 & -1 & -2 & -1 & -1 & -2 & 1 & 0 \\ \hline 
    0 & 0 & -1 & 0 & -1 & -2 & 0 & -1 & -2 & 2 & 0 \\ \hline 
    -1 & -1 & -1 & -1 & -1 & -2 & -1 & -1 & -2 & 1 & 0 \\ \hline 
    -2 & -2 & -2 & -2 & -2 & -2 & -1 & -1 & -2 & 1 & 0 \\ \hline 
    0 & -1 & -1 & 0 & -1 & -1 & 0 & -2 & -2 & 1 & -1 \\ \hline 
    -1 & -1 & -1 & -1 & -1 & -1 & -2 & -1 & -2 & -1 & -1 \\ \hline 
    -2 & -2 & -2 & -2 & -2 & -2 & -2 & -2 & -2 & 0 & -1 \\ \hline 
    2 & 1 & 1 & 2 & 1 & 1 & 1 & -1 & 0 & 3 & 1 \\ \hline 
    0 & 0 & 0 & 0 & 0 & 0 & -1 & -1 & -1 & 1 & 1 
    \end{array} \right)}
     \quad
\begin{tikzpicture} [scale=.4,anchorbase]
\draw[thin,opacity=.5] (-4.5,-1) to (3,-1);
\draw[thin,opacity=.5] (-4.5,0) to (3,0);
\draw[thin,opacity=.5] (-4.5,1) to (3,1);
\draw[thin,opacity=.5] (-4,-1.5) to (-4,1);
\draw[thin,opacity=.5] (-3,-1.5) to (-3,1);
\draw[thin,opacity=.5] (-2,-1.5) to (-2,1);
\draw[thin,opacity=.5] (-1,-1.5) to (-1,1);
\draw[thin,opacity=.5] (0,-1.5) to (0,1);
\draw[thin,opacity=.5] (1,-1.5) to (1,1);
\draw[thin,opacity=.5] (2,-1.5) to (2,1);
\draw[thin,opacity=.5] (3,-1.5) to (3,1);
\node at (-4,1) {$\circ$};
\node at (-3,1) {$\bullet$};
\node at (-2,1) {$\circ$};
\node at (-2,-1) {$\circ$};
\node at (-1,1) {$\bullet$};
\node at (-0.7,1.2) {\tiny $2$};
\node at (0,1) {$\circ$};
\node at (1,1) {$\bullet$};
\node at (1.3,1.2) {\tiny $2$};
\node at (2,-1) {$\circ$};
\node at (3,1) {$\bullet$};
\node at (-5.5,1) { $a^{1}$};
\node at (-5.5,-1) { $a^{-1}$};
\node at (-4,-2) {$q^{-4}$};
\node at (-2,-2) {$q^{-2}$};
\node at (0,-2) {$q^{0}$};
\node at (2,-2) {$q^{2}$};
\end{tikzpicture} 
    \]
The display here shows inactive variables after active variables as in
\cite{SW}. Since the tangle is of type $UP_{par}$, one can also present the
generating function data in active quiver form. The $1$-colored evaluation gives
a lower bound of $3$ active and $5$ inactive summation indices, which is
achieved by the following data.

\[
    \scalemath{0.65}{\left(\begin{array}{c|c|c}
        1 &1 & 1 \\ \hline 
        1 & 0 & 1 \\ \hline 
        1 & -1 & 1 \\ \hline\hline 
        0 & 0 & 1 \\ \hline 
        0 & -1 & 1 \\ \hline 
        0 & -2 & 1 \\ \hline 
        0 & -1 & -1 \\ \hline 
        0 & -3 & -1 
        \end{array}\right)}
       ,
       \scalemath{0.65}{\left(\begin{array}{c|c|c|c|c|c|c|c} 
        0 & -1 & -2 & 0 & -1 & -2 & 2 & 0 \\ \hline 
        -1 & -1 & -2 & -1 & -1 & -2 & 1 & 0 \\ \hline 
        -2 & -2 & -2 & -1 & -1 & -2 & 1 & 0 \\ \hline 
        0 & -1 & -1 & 0 & -2 & -2 & 1 & -1 \\ \hline 
        -1 & -1 & -1 & -2 & -1 & -2 & -1 & -1 \\ \hline 
        -2 & -2 & -2 & -2 & -2 & -2 & 0 & -1 \\ \hline 
        2 & 1 & 1 & 1 & -1 & 0 & 3 & 1 \\ \hline 
        0 & 0 & 0 & -1 & -1 & -1 & 1 & 1 
        \end{array} \right)}
         \quad
    \begin{tikzpicture} [scale=.4,anchorbase]
    \draw[thin,opacity=.5] (-4.5,-1) to (2,-1);
    \draw[thin,opacity=.5] (-4.5,0) to (2,0);
    \draw[thin,opacity=.5] (-4.5,1) to (2,1);
    \draw[thin,opacity=.5] (-4,-1.5) to (-4,1);
    \draw[thin,opacity=.5] (-3,-1.5) to (-3,1);
    \draw[thin,opacity=.5] (-2,-1.5) to (-2,1);
    \draw[thin,opacity=.5] (-1,-1.5) to (-1,1);
    \draw[thin,opacity=.5] (0,-1.5) to (0,1);
    \draw[thin,opacity=.5] (1,-1.5) to (1,1);
    \draw[thin,opacity=.5] (2,-1.5) to (2,1);
    \node at (-4,1) {$\circ$};
    \node at (-3,1) {$\bullet$};
    \node at (-2,1) {$\circ$};
    \node at (-2,-1) {$\circ$};
    \node at (-1,1) {$\bullet$};
    \node at (0,1) {$\circ$};
    \node at (1,1) {$\bullet$};
    \node at (2,-1) {$\circ$};
    \node at (-5.5,1) { $a^{1}$};
    \node at (-5.5,-1) { $a^{-1}$};
    \node at (-4,-2) {$q^{-4}$};
    \node at (-2,-2) {$q^{-2}$};
    \node at (0,-2) {$q^{0}$};
    \node at (2,-2) {$q^{2}$};
    \end{tikzpicture} 
        \]
The two expressions are related by the identity from Remark~\ref{rem:deactivate}.

\subsection{The (2,-3)-pretzel tangle}
    The following shows the quiver form generating function data for the skein
    module element of the $(2,-3)$-pretzel tangle. The diagram on the right
    shows the $9$ monomials, $6$ active and $3$ inactive, in the $1$-colored
    skein evaluation, which gives a lower bound on the size of any quiver
    describing the generating function. This lower bound is achieved by the
    following data, which was also found experimentally.
    \[
        \begin{tikzpicture} [scale=.5,anchorbase,xscale=-1,yscale=-1]
            \draw[thick] (0,1) \pu (1,2);
            \draw[thick] (0,2) \pu (1,3);
            \draw[thick] (0,3) \pu (1,4);
            \draw[thick] (3,2.5) \pu (2,3.5);
            \draw[thick] (3,1.5) \pu (2,2.5);
            \draw[white, line width=.15cm] (1,1) \pu (0,2);
            \draw[white, line width=.15cm] (1,2) \pu (0,3);
            \draw[white, line width=.15cm] (1,3) \pu (0,4);
            \draw[white, line width=.15cm] (2,2.5) \pu (3,3.5);
            \draw[white, line width=.15cm] (2,1.5) \pu (3,2.5);
            \draw[thick] (1,1) \pu (0,2);
            \draw[thick] (1,2) \pu (0,3);
            \draw[thick] (1,3) \pu (0,4);
            \draw[thick] (2,2.5) \pu (3,3.5);
            \draw[thick] (2,1.5) \pu (3,2.5);
            \draw[thick,<-] (0,0) to (0,1); 
            \draw[thick] (1,4) \ur (1.5,4.5) \rd (2,4) \pd (2,3.5) 
            (2,1.5) \pd (2,1) \dl (1.5,.5) \lu (1,1)
            (0,4) to (0,5);
            \draw[thick,<-] (3,0) to (3,1.5);
            \draw[thick](3,3.5) to (3,5);
            \end{tikzpicture}    
    \qquad  
    \scalemath{0.65}{\left(\begin{array}{c|c|c}
        0& 4 & 1 \\ \hline 
        0& 3 & 1 \\ \hline 
        0&  2 & 1 \\ \hline 
        0&  3 & 1 \\ \hline 
        0&  2 & 1 \\ \hline 
        0&  1 & 1 \\ \hline \hline
        0&  5 & 1 \\ \hline 
        0& 1 & 1 \\ \hline 
        0&  0 & -1 
        \end{array}\right)}
       ,
       \scalemath{0.65}{\left(\begin{array}{c|c|c|c|c|c|c|c|c} 
        -1 & -2 & -2 & -2 & -3 & -3 & -1 & -3 & -1 \\ \hline 
        -2 & -2 & -3 & -2 & -3 & -4 & -1 & -3 & -1 \\ \hline 
        -2 & -3 & -3 & -2 & -3 & -4 & -1 & -4 & -2 \\ \hline 
        -2 & -2 & -2 & -2 & -3 & -3 & -1 & -3 & -1 \\ \hline 
        -3 & -3 & -3 & -3 & -3 & -4 & -1 & -3 & -1 \\ \hline 
        -3 & -4 & -4 & -3 & -4 & -4 & -1 & -4 & -2 \\ \hline 
        -1 & -1 & -1 & -1 & -1 & -1 & -1 & -2 & -1 \\ \hline 
        -3 & -3 & -4 & -3 & -3 & -4 & -2 & -3 & -2 \\ \hline 
        -1 & -1 & -2 & -1 & -1 & -2 & -1 & -2 & 0 
        \end{array} \right)}
         \quad
\begin{tikzpicture} [scale=.4,anchorbase]
\draw[thin,opacity=.5] (-3.5,-1) to (4,-1);
\draw[thin,opacity=.5] (-3.5,0) to (4,0);
\draw[thin,opacity=.5] (-3.5,1) to (4,1);
\draw[thin,opacity=.5] (-3,-1.5) to (-3,1);
\draw[thin,opacity=.5] (-3,-1.5) to (-3,1);
\draw[thin,opacity=.5] (-2,-1.5) to (-2,1);
\draw[thin,opacity=.5] (-1,-1.5) to (-1,1);
\draw[thin,opacity=.5] (0,-1.5) to (0,1);
\draw[thin,opacity=.5] (1,-1.5) to (1,1);
\draw[thin,opacity=.5] (2,-1.5) to (2,1);
\draw[thin,opacity=.5] (3,-1.5) to (3,1);
\draw[thin,opacity=.5] (4,-1.5) to (4,1);
\node at (-3,1) {$\bullet$};
\node at (-2,1) {$\circ$};
\node at (-1,1) {$\bullet$};
\node at (-0.7,1.2) {\tiny $2$};
\node at (0,-1) {$\circ$};
\node at (1,1) {$\bullet$};
\node at (1.3,1.2) {\tiny $2$};
\node at (3,1) {$\bullet$};
\node at (4,1) {$\circ$};
\node at (-4.5,1) { $a^{1}$};
\node at (-4.5,-1) { $a^{-1}$};
\node at (-2,-2) {$q^{-2}$};
\node at (0,-2) {$q^{0}$};
\node at (2,-2) {$q^{2}$};
\node at (4,-2) {$q^{4}$};
\end{tikzpicture} 
        \]
Just like before, this tangle is of type $UP_{par}$ and admits a generating
        function in active quiver form. The $1$-colored evaluation gives a lower
        bound of $3$ active and $3$ inactive generators for such an expression, which is achieved by the following data.

        \[
            \scalemath{0.65}{\left(\begin{array}{c|c|c}
                1&  3 & 1 \\ \hline 
                1& 2 & 1 \\ \hline 
                1&  1 & 1 \\ \hline \hline
                0&  5 & 1 \\ \hline 
                0&  1 & 1 \\ \hline 
                0&  0 & -1 
                \end{array}\right)}
               ,
               \scalemath{0.65}{\left(\begin{array}{c|c|c|c|c|c} 
                -2 & -3 & -3 & -1 & -3 & -1 \\ \hline 
                -3 & -3 & -4 & -1 & -3 & -1 \\ \hline 
                -3 & -4 & -4 & -1 & -4 & -2 \\ \hline 
                -1 & -1 & -1 & -1 & -2 & -1 \\ \hline 
                -3 & -3 & -4 & -2 & -3 & -2 \\ \hline 
                -1 & -1 & -2 & -1 & -2 & 0 
                \end{array} \right)}
                \quad
\begin{tikzpicture} [scale=.4,anchorbase]
\draw[thin,opacity=.5] (-3.5,-1) to (4,-1);
\draw[thin,opacity=.5] (-3.5,0) to (4,0);
\draw[thin,opacity=.5] (-3.5,1) to (4,1);
\draw[thin,opacity=.5] (-3,-1.5) to (-3,1);
\draw[thin,opacity=.5] (-3,-1.5) to (-3,1);
\draw[thin,opacity=.5] (-2,-1.5) to (-2,1);
\draw[thin,opacity=.5] (-1,-1.5) to (-1,1);
\draw[thin,opacity=.5] (0,-1.5) to (0,1);
\draw[thin,opacity=.5] (1,-1.5) to (1,1);
\draw[thin,opacity=.5] (2,-1.5) to (2,1);
\draw[thin,opacity=.5] (3,-1.5) to (3,1);
\draw[thin,opacity=.5] (4,-1.5) to (4,1);
\node at (-3,1) {$\bullet$};
\node at (-2,1) {$\circ$};
\node at (-1,1) {$\bullet$};
\node at (0,-1) {$\circ$};
\node at (1,1) {$\bullet$};
\node at (4,1) {$\circ$};
\node at (-4.5,1) { $a^{1}$};
\node at (-4.5,-1) { $a^{-1}$};
\node at (-2,-2) {$q^{-2}$};
\node at (0,-2) {$q^{0}$};
\node at (2,-2) {$q^{2}$};
\node at (4,-2) {$q^{4}$};
\end{tikzpicture} 
                \]
                The two expressions are again related by the identity from Remark~\ref{rem:deactivate}.

                    
            

                \Addresses
                
                \end{document}